\providecommand{\U}[1]{\protect\rule{.1in}{.1in}}
\theoremstyle{definition}
\newtheorem{theo}{Theorem}[section]
\newenvironment{theorem}[1][]
{\begin{theo}[#1]\begin{leftbar}}
{\end{leftbar}\end{theo}}
\newtheorem{lem}[theo]{Lemma}
\newenvironment{lemma}[1][]
{\begin{lem}[#1]\begin{leftbar}}
{\end{leftbar}\end{lem}}
\newtheorem{prop}[theo]{Proposition}
\newenvironment{proposition}[1][]
{\begin{prop}[#1]\begin{leftbar}}
{\end{leftbar}\end{prop}}
\newtheorem{defi}[theo]{Definition}
\newenvironment{definition}[1][]
{\begin{defi}[#1]\begin{leftbar}}
{\end{leftbar}\end{defi}}
\newtheorem{remk}[theo]{Remark}
\newenvironment{remark}[1][]
{\begin{remk}[#1]\begin{leftbar}}
{\end{leftbar}\end{remk}}
\newtheorem{coro}[theo]{Corollary}
\newtheorem{warn}[theo]{Warning}
\newtheorem{conj}[theo]{Conjecture}
\newenvironment{conjecture}[1][]
{\begin{conj}[#1]\begin{leftbar}}
{\end{leftbar}\end{conj}}
\newtheorem{exmp}[theo]{Example}
\newcommand\arcstr{\ar@/^1pc/}
\let\sumnonlimits\sum
\let\prodnonlimits\prod
\renewcommand{\sum}{\sumnonlimits\limits}
\renewcommand{\prod}{\prodnonlimits\limits}
\begin{document}

\title{Proof of a conjecture of Bergeron, Ceballos and Labb\'e}
\author{Alexander Postnikov and Darij Grinberg}
\date{October 25, 2017 (with corrections June 13, 2026)}
\maketitle

\begin{abstract}
The reduced expressions for a given element $w$ of a Coxeter group $\left(
W,S\right)  $ can be regarded as the vertices of a directed graph
$\mathcal{R}\left(  w\right)  $; its arcs correspond to the braid moves.
Specifically, an arc goes from a reduced expression $\overrightarrow{a}$ to a
reduced expression $\overrightarrow{b}$ when $\overrightarrow{b}$ is obtained
from $\overrightarrow{a}$ by replacing a contiguous subword of the form
$stst\cdots$ (for some distinct $s,t\in S$) by $tsts\cdots$ (where both
subwords have length $m_{s,t}$, the order of $st\in W$). We prove a strong
bipartiteness-type result for this graph $\mathcal{R}\left(  w\right)  $: Not
only does every cycle of $\mathcal{R}\left(  w\right)  $ have even length;
actually, the arcs of $\mathcal{R}\left(  w\right)  $ can be colored (with
colors corresponding to the type of braid moves used), and to every color $c$
corresponds an \textquotedblleft opposite\textquotedblright\ color
$c^{\operatorname*{op}}$ (corresponding to the reverses of the braid moves
with color $c$), and for any color $c$, the number of arcs in any given cycle
of $\mathcal{R}\left(  w\right)  $ having color in $\left\{
c,c^{\operatorname*{op}}\right\}  $ is even. This is a generalization and
strengthening of a 2014 result by Bergeron, Ceballos and Labb\'{e}.

\end{abstract}

\section*{Introduction}

Let $\left(  W,S\right)  $ be a Coxeter group\footnote{All terminology and
notation that appears in this introduction will later be defined in more
detail.} with Coxeter matrix $\left(  m_{s,s^{\prime}}\right)  _{\left(
s,s^{\prime}\right)  \in S\times S}$, and let $w\in W$. Consider a directed
graph $\mathcal{R}\left(  w\right)  $ whose vertices are the reduced
expressions for $w$, and whose arcs are defined as follows: The graph
$\mathcal{R}\left(  w\right)  $ has an arc from a reduced expression
$\overrightarrow{a}$ to a reduced expression $\overrightarrow{b}$ whenever
$\overrightarrow{b}$ can be obtained from $\overrightarrow{a}$ by replacing
some contiguous subword of the form $\underbrace{\left(  s,t,s,t,\ldots
\right)  }_{m_{s,t}\text{ letters}}$ by $\underbrace{\left(  t,s,t,s,\ldots
\right)  }_{m_{s,t}\text{ letters}}$, where $s$ and $t$ are two distinct
elements of $S$. (This replacement is called an $\left(  s,t\right)
$\emph{-braid move}.)

The directed graph $\mathcal{R}\left(  w\right)  $ (or, rather, its undirected
version) has been studied many times; see, for example, \cite{ReiRoi11} and
the references therein. In this note, we shall prove a bipartiteness-type
result for $\mathcal{R}\left(  w\right)  $. Its simplest aspect (actually, a
corollary) is the fact that $\mathcal{R}\left(  w\right)  $ is bipartite
(i.e., every cycle of $\mathcal{R}\left(  w\right)  $ has even length); but we
shall concern ourselves with stronger statements. We can regard $\mathcal{R}%
\left(  w\right)  $ as an edge-colored directed graph: Namely, whenever a
reduced expression $\overrightarrow{b}$ is obtained from a reduced expression
$\overrightarrow{a}$ by an $\left(  s,t\right)  $-braid move, we color the arc
from $\overrightarrow{a}$ to $\overrightarrow{b}$ with the conjugacy
class\footnote{A \emph{conjugacy class}\ here means an equivalence class under
the relation $\sim$ on the set $S\times S$, which is given by
\[
\left(  \left(  s,t\right)  \sim\left(  s^{\prime},t^{\prime}\right)
\ \Longleftrightarrow\ \text{there exists a }q\in W\text{ such that }%
qsq^{-1}=s^{\prime}\text{ and }qtq^{-1}=t^{\prime}\right)  .
\]
The conjugacy class of an $\left(  s,t\right)  \in S\times S$ is denoted by
$\left[  \left(  s,t\right)  \right]  $.} $\left[  \left(  s,t\right)
\right]  $ of the pair $\left(  s,t\right)  \in S\times S$. Our result
(Theorem \ref{thm.BCL}) then states that, for every such color $\left[
\left(  s,t\right)  \right]  $, every cycle of $\mathcal{R}\left(  w\right)  $
has as many arcs colored $\left[  \left(  s,t\right)  \right]  $ as it has
arcs colored $\left[  \left(  t,s\right)  \right]  $, and that the total
number of arcs colored $\left[  \left(  s,t\right)  \right]  $ and $\left[
\left(  t,s\right)  \right]  $ in any given cycle is even. This generalizes
and strengthens a result of Bergeron, Ceballos and Labb\'{e} \cite[Theorem
3.1]{BCL}.

\subsection*{Acknowledgments}

We thank Nantel Bergeron and Cesar Ceballos for introducing us to the problem
at hand, and the referee for useful remarks.

\section{\label{sect.motivate-ex}A motivating example}

Before we introduce the general setting, let us demonstrate it on a simple
example. This example is not necessary for the rest of this note (and can be
skipped by the reader\footnote{All notations introduced in Section
\ref{sect.motivate-ex} should be understood as local to this section; they
will not be used beyond it (and often will be replaced by eponymic notations
for more general objects).}); it merely provides some intuition and motivation
for the definitions to come.

For this example, we fix an integer $n\geq1$, and we let $W$ be the symmetric
group $S_{n}$ of the set $\left\{  1,2,\ldots,n\right\}  $. For each
$i\in\left\{  1,2,\ldots,n-1\right\}  $, let $s_{i}\in W$ be the transposition
which switches $i$ with $i+1$ (while leaving the remaining elements of
$\left\{  1,2,\ldots,n\right\}  $ unchanged). Let $S=\left\{  s_{1}%
,s_{2},\ldots,s_{n-1}\right\}  \subseteq W$. The pair $\left(  W,S\right)  $
is an example of what is called a \emph{Coxeter group} (see, e.g.,
\cite[Chapter 4]{Bourbaki4-6} and \cite[\S 1]{Lusztig-Hecke}); more precisely,
it is known as the Coxeter group $A_{n-1}$. In particular, $S$ is a generating
set for $W$, and the group $W$ can be described by the generators $s_{1}%
,s_{2},\ldots,s_{n-1}$ and the relations%
\begin{align}
s_{i}^{2}  &  =\operatorname*{id}\ \ \ \ \ \ \ \ \ \ \text{for every }%
i\in\left\{  1,2,\ldots,n-1\right\}
;\ \ \ \ \ \ \ \ \ \ \label{eq.exam.A3.quad}\\
s_{i}s_{j}  &  =s_{j}s_{i}\ \ \ \ \ \ \ \ \ \ \text{for every }i,j\in\left\{
1,2,\ldots,n-1\right\}  \text{ such that }\left\vert i-j\right\vert
>1;\ \ \ \ \ \ \ \label{eq.exam.A3.braid1}\\
s_{i}s_{j}s_{i}  &  =s_{j}s_{i}s_{j}\ \ \ \ \ \ \ \ \ \ \text{for every
}i,j\in\left\{  1,2,\ldots,n-1\right\}  \text{ such that }\left\vert
i-j\right\vert =1.\ \ \ \ \ \label{eq.exam.A3.braid2}%
\end{align}
This is known as the \emph{Coxeter presentation} of $S_{n}$, and is due to
Moore (see, e.g., \cite[(6.23)--(6.25)]{CoxMos80} or \cite[Theorem
1.2.4]{Williamson}).

Given any $w\in W$, there exists a tuple $\left(  a_{1},a_{2},\ldots
,a_{k}\right)  $ of elements of $S$ such that $w=a_{1}a_{2}\cdots a_{k}$
(since $S$ generates $W$). Such a tuple is called a \emph{reduced expression}
for $w$ if its length $k$ is minimal among all such tuples (for the given
$w$). For instance, when $n=4$, the permutation $\pi\in S_{4}=W$ that is
written as $\left(  3,1,4,2\right)  $ in one-line notation has reduced
expressions $\left(  s_{2},s_{1},s_{3}\right)  $ and $\left(  s_{2}%
,s_{3},s_{1}\right)  $; in fact, $\pi=s_{2}s_{1}s_{3}=s_{2}s_{3}s_{1}$. (We
are following the convention by which the product $u\circ v=uv$ of two
permutations $u,v\in S_{n}$ is defined to be the permutation sending each $i$
to $u\left(  v\left(  i\right)  \right)  $.)

Given a $w\in W$, the set of reduced expressions for $w$ has an additional
structure of a directed graph. Namely, the equalities (\ref{eq.exam.A3.braid1}%
) and (\ref{eq.exam.A3.braid2}) show that, given a reduced expression
$\overrightarrow{a}=\left(  a_{1},a_{2},\ldots,a_{k}\right)  $ for $w\in W$,
we can obtain another reduced expression in any of the following two ways:

\begin{itemize}
\item Pick some $i,j\in\left\{  1,2,\ldots,n-1\right\}  $ such that
$\left\vert i-j\right\vert >1$, and pick any factor of the form $\left(
s_{i},s_{j}\right)  $ in $\overrightarrow{a}$ (that is, a pair of adjacent
entries of $\overrightarrow{a}$, the first of which is $s_{i}$ and the second
of which is $s_{j}$), provided that such a factor exists, and replace this
factor by $\left(  s_{j},s_{i}\right)  $.

\item Alternatively, pick some $i,j\in\left\{  1,2,\ldots,n-1\right\}  $ such
that $\left\vert i-j\right\vert =1$, and pick any factor of the form $\left(
s_{i},s_{j},s_{i}\right)  $ in $\overrightarrow{a}$, provided that such a
factor exists, and replace this factor by $\left(  s_{j},s_{i},s_{j}\right)  $.
\end{itemize}

In both cases, we obtain a new reduced expression for $w$ (provided that the
respective factors exist). We say that this new expression is obtained from
$\overrightarrow{a}$ by an $\left(  s_{i},s_{j}\right)  $\emph{-braid move},
or (when we do not want to mention $s_{i}$ and $s_{j}$) by a \emph{braid
move}. For instance, the reduced expression $\left(  s_{2},s_{1},s_{3}\right)
$ for $\pi=\left(  3,1,4,2\right)  \in S_{4}$ is obtained from the reduced
expression $\left(  s_{2},s_{3},s_{1}\right)  $ by an $\left(  s_{3}%
,s_{1}\right)  $-braid move, and conversely $\left(  s_{2},s_{3},s_{1}\right)
$ is obtained from $\left(  s_{2},s_{1},s_{3}\right)  $ by an $\left(
s_{1},s_{3}\right)  $-braid move.

Now, we can define a directed graph $\mathcal{R}_{0}\left(  w\right)  $ whose
vertices are the reduced expressions for $w$, and which has an edge from
$\overrightarrow{a}$ to $\overrightarrow{b}$ whenever $\overrightarrow{b}$ is
obtained from $\overrightarrow{a}$ by a braid move (of either sort). For
instance, let $n=5$, and let $w$ be the permutation written in one-line
notation as $\left(  3,2,1,5,4\right)  $. Then, $\mathcal{R}_{0}\left(
w\right)  $ looks as follows:%
\[%
\xymatrix{
& \left(s_2,s_4,s_1,s_2\right) \arcstr[r]^{\left(s_4,s_1\right)}
\arcstr[dl]^{\left(s_2,s_4\right)} & \left(s_2,s_1,s_4,s_2\right
) \arcstr[l]^{\left(s_1,s_4\right)} \arcstr[rd]^{\left(s_4,s_2\right)} \\
\left(s_4,s_2,s_1,s_2\right) \arcstr[ur]^{\left(s_4,s_2\right)} \arcstr
[d]^{\left(s_2,s_1\right)} & & & \left(s_2,s_1,s_2,s_4\right) \arcstr
[lu]^{\left(s_2,s_4\right)}
\arcstr[d]^{\left(s_2,s_1\right)} \\
\left(s_4,s_1,s_2,s_1\right) \arcstr[u]^{\left(s_1,s_2\right)} \arcstr
[dr]^{\left(s_4,s_1\right)} & & & \left(s_1,s_2,s_1,s_4\right) \arcstr
[u]^{\left(s_1,s_2\right)}
\arcstr[dl]^{\left(s_1,s_4\right)} \\
& \left(s_1,s_4,s_2,s_1\right) \arcstr[ul]^{\left(s_1,s_4\right)}
\arcstr[r]^{\left(s_4,s_2\right)} & \left(s_1,s_2,s_4,s_1\right) \arcstr
[l]^{\left(s_2,s_4\right)} \arcstr[ur]^{\left(s_4,s_1\right)}
}%
.
\]
Here, we have \textquotedblleft colored\textquotedblright\ (i.e., labelled)
every arc $\left(  \overrightarrow{a},\overrightarrow{b}\right)  $ with the
pair $\left(  s_{i},s_{j}\right)  $ such that $\overrightarrow{b}$ is obtained
from $\overrightarrow{a}$ by an $\left(  s_{i},s_{j}\right)  $-braid move.

In our particular case, the graph $\mathcal{R}_{0}\left(  w\right)  $ consists
of a single bidirected cycle. This is not true in general, but certain things
hold in general. First, it is clear that whenever an arc from some vertex
$\overrightarrow{a}$ to some vertex $\overrightarrow{b}$ has color $\left(
s_{i},s_{j}\right)  $, then there is an arc with color $\left(  s_{j}%
,s_{i}\right)  $ from $\overrightarrow{b}$ to $\overrightarrow{a}$. Thus,
$\mathcal{R}_{0}\left(  w\right)  $ can be regarded as an undirected graph (at
the expense of murkying up the colors of the arcs). Furthermore, every reduced
expression for $w$ can be obtained from any other by a sequence of braid moves
(this is the Matsumoto-Tits theorem; it appears, e.g., in \cite[Theorem
1.9]{Lusztig-Hecke}). Thus, the graph $\mathcal{R}_{0}\left(  w\right)  $ is
strongly connected.

What do the cycles of $\mathcal{R}_{0}\left(  w\right)  $ have in common?
Walking down the long cycle in the graph $\mathcal{R}_{0}\left(  w\right)  $
for $w=\left(  3,2,1,5,4\right)  \in S_{5}$ clockwise, we observe that the
$\left(  s_{1},s_{2}\right)  $-braid move is used once (i.e., we traverse
precisely one arc with color $\left(  s_{1},s_{2}\right)  $), the $\left(
s_{2},s_{1}\right)  $-braid move once, the $\left(  s_{1},s_{4}\right)
$-braid move twice, the $\left(  s_{4},s_{1}\right)  $-braid move once, the
$\left(  s_{2},s_{4}\right)  $-braid move once, and the $\left(  s_{4}%
,s_{2}\right)  $-braid move twice. In particular:

\begin{itemize}
\item The total number of $\left(  s_{i},s_{j}\right)  $-braid moves with
$\left\vert i-j\right\vert =1$ used is even (namely, $2$).

\item The total number of $\left(  s_{i},s_{j}\right)  $-braid moves with
$\left\vert i-j\right\vert >1$ used is even (namely, $6$).
\end{itemize}

This example alone is scant evidence of any general result, but both evenness
patterns persist for general $n$, for any $w\in S_{n}$ and any directed cycle
in $\mathcal{R}_{0}\left(  w\right)  $. We can simplify the statement if we
change our coloring to a coarser one. Namely, let $\mathfrak{M}$ denote the
subset $\left\{  \left(  s,t\right)  \in S\times S\ \mid\ s\neq t\right\}
=\left\{  \left(  s_{i},s_{j}\right)  \ \mid\ i\neq j\right\}  $ of $S\times
S$. We define a binary relation $\sim$ on $\mathfrak{M}$ by%
\[
\left(  \left(  s,t\right)  \sim\left(  s^{\prime},t^{\prime}\right)
\ \Longleftrightarrow\ \text{there exists a }q\in W\text{ such that }%
qsq^{-1}=s^{\prime}\text{ and }qtq^{-1}=t^{\prime}\right)  .
\]
This relation $\sim$ is an equivalence relation; it thus gives rise to a
quotient set $\mathfrak{M}/\sim$. It is easy to see that the quotient set
$\mathfrak{M}/\sim$ has exactly two elements (for $n\geq4$): the equivalence
class of all $\left(  s_{i},s_{j}\right)  $ with $\left\vert i-j\right\vert
=1$, and the equivalence class of all $\left(  s_{i},s_{j}\right)  $ with
$\left\vert i-j\right\vert >1$. Let us now define an edge-colored directed
graph $\mathcal{R}\left(  w\right)  $ by starting with $\mathcal{R}_{0}\left(
w\right)  $, and replacing each color $\left(  s_{i},s_{j}\right)  $ by its
equivalence class $\left[  \left(  s_{i},s_{j}\right)  \right]  $. Thus, in
$\mathcal{R}\left(  w\right)  $, the arcs are colored with the (at most two)
elements of $\mathfrak{M}/\sim$. Now, our evenness patterns can be restated as
follows: For any $n\in\mathbb{N}$, any $w\in S_{n}$ and any color
$c\in\mathfrak{M}/\sim$, any directed cycle of $\mathcal{R}\left(  w\right)  $
has an even number of arcs with color $c$.

This can be generalized further to every Coxeter group, with a minor caveat.
Namely, let $\left(  W,S\right)  $ be a Coxeter group with Coxeter matrix
$\left(  m_{s,s^{\prime}}\right)  _{\left(  s,s^{\prime}\right)  \in S\times
S}$. Notions such as reduced expressions and braid moves still make sense (see
below for references and definitions). We redefine $\mathfrak{M}$ as $\left\{
\left(  s,t\right)  \in S\times S\ \mid\ s\neq t\text{ and }m_{s,t}%
<\infty\right\}  $ (since pairs $\left(  s,t\right)  $ with $m_{s,t}=\infty$
do not give rise to braid moves). Unlike in the case of $W=S_{n}$, it is not
necessarily true that $\left(  s,t\right)  \sim\left(  t,s\right)  $ for every
$\left(  s,t\right)  \in\mathfrak{M}$. We define $\left[  \left(  s,t\right)
\right]  ^{\operatorname*{op}}=\left[  \left(  t,s\right)  \right]  $. The
evenness pattern now has to be weakened as follows: For every $w\in W$ and any
color $c\in\mathfrak{M}/\sim$, any directed cycle of $\mathcal{R}\left(
w\right)  $ has an even number of arcs whose color belongs to $\left\{
c,c^{\operatorname*{op}}\right\}  $. (For $W=S_{n}$, we have
$c=c^{\operatorname*{op}}$, and thus this recovers our old evenness patterns.)
This is part of the main theorem we will prove in this note -- namely, Theorem
\ref{thm.BCL} \textbf{(b)}; it extends a result \cite[Theorem 3.1]{BCL}
obtained by Bergeron, Ceballos and Labb\'{e} by geometric means. The other
part of the main theorem (Theorem \ref{thm.BCL} \textbf{(a)}) states that any
directed cycle of $\mathcal{R}\left(  w\right)  $ has as many arcs with color
$c$ as it has arcs with color $c^{\operatorname*{op}}$.

\section{The theorem}

In the following, we shall use the notations of \cite[\S 1]{Lusztig-Hecke}
concerning Coxeter groups. (These notations are compatible with those of
\cite[Chapter 4]{Bourbaki4-6}, except that Bourbaki writes $m\left(
s,s^{\prime}\right)  $ instead of $m_{s,s^{\prime}}$, and speaks of
\textquotedblleft Coxeter systems\textquotedblright\ instead of
\textquotedblleft Coxeter groups\textquotedblright.)

Let us recall a brief definition of Coxeter groups and Coxeter matrices:

A \emph{Coxeter group} is a pair $\left(  W,S\right)  $, where $W$ is a group,
and where $S$ is a finite subset of $W$ having the following property: There
exists a matrix $\left(  m_{s,s^{\prime}}\right)  _{\left(  s,s^{\prime
}\right)  \in S\times S}\in\left\{  1,2,3,\ldots,\infty\right\}  ^{S\times S}$
such that

\begin{itemize}
\item every $s\in S$ satisfies $m_{s,s}=1$;

\item every two distinct elements $s$ and $t$ of $S$ satisfy $m_{s,t}%
=m_{t,s}\geq2$;

\item the group $W$ can be presented by the generators $S$ and the relations
\[
\left(  st\right)  ^{m_{s,t}}=1\ \ \ \ \ \ \ \ \ \ \text{for all }\left(
s,t\right)  \in S\times S\text{ satisfying }m_{s,t}\neq\infty.
\]

\end{itemize}

In this case, the matrix $\left(  m_{s,s^{\prime}}\right)  _{\left(
s,s^{\prime}\right)  \in S\times S}$ is called the \emph{Coxeter matrix} of
$\left(  W,S\right)  $. It is well-known (see, e.g., \cite[\S 1]%
{Lusztig-Hecke}\footnote{See also \cite[Chapter V, n$^{\circ}$ 4.3,
Corollaire]{Bourbaki4-6} for a proof of the existence of a Coxeter group
corresponding to a given Coxeter matrix. Note that Bourbaki's definition of a
\textquotedblleft Coxeter system\textquotedblright\ differs from our
definition of a \textquotedblleft Coxeter group\textquotedblright\ in the
extra requirement that $m_{s,t}$ be the order of $st\in W$; but this turns out
to be a consequence of the other requirements.}) that any Coxeter group has a
unique Coxeter matrix, and conversely, for every finite set $S$ and any matrix
$\left(  m_{s,s^{\prime}}\right)  _{\left(  s,s^{\prime}\right)  \in S\times
S}\in\left\{  1,2,3,\ldots,\infty\right\}  ^{S\times S}$ satisfying the first
two of the three requirements above, there exists a unique (up to isomorphism
preserving $S$) Coxeter group $\left(  W,S\right)  $.

We fix a Coxeter group $\left(  W,S\right)  $ with Coxeter matrix $\left(
m_{s,s^{\prime}}\right)  _{\left(  s,s^{\prime}\right)  \in S\times S}$. Thus,
$W$ is a group, and $S$ is a set of elements of order $2$ in $W$ such that for
every $\left(  s,s^{\prime}\right)  \in S\times S$, the element $ss^{\prime
}\in W$ has order $m_{s,s^{\prime}}$. (See, e.g., \cite[Proposition
1.3(b)]{Lusztig-Hecke} for this well-known fact.)

We let $\mathfrak{M}$ denote the subset%
\[
\left\{  \left(  s,t\right)  \in S\times S\ \mid\ s\neq t\text{ and }%
m_{s,t}<\infty\right\}
\]
of $S\times S$. (This is denoted by $I$ in \cite[Chapter 4, n$^{\circ}$
1.3]{Bourbaki4-6}.) We define a binary relation $\sim$ on $\mathfrak{M}$ by%
\[
\left(  \left(  s,t\right)  \sim\left(  s^{\prime},t^{\prime}\right)
\ \Longleftrightarrow\ \text{there exists a }q\in W\text{ such that }%
qsq^{-1}=s^{\prime}\text{ and }qtq^{-1}=t^{\prime}\right)  .
\]
It is clear that this relation $\sim$ is an equivalence relation; it thus
gives rise to a quotient set $\mathfrak{M}/\sim$. For every pair
$P\in\mathfrak{M}$, we denote by $\left[  P\right]  $ the equivalence class of
$P$ with respect to this relation $\sim$.

We set $\mathbb{N}=\left\{  0,1,2,\ldots\right\}  $.

A \emph{word} will mean a $k$-tuple for some $k\in\mathbb{N}$. A
\emph{subword} of a word $\left(  s_{1},s_{2},\ldots,s_{k}\right)  $ will mean
a word of the form $\left(  s_{i_{1}},s_{i_{2}},\ldots,s_{i_{p}}\right)  $,
where $i_{1},i_{2},\ldots,i_{p}$ are elements of $\left\{  1,2,\ldots
,k\right\}  $ satisfying $i_{1}<i_{2}<\cdots<i_{p}$. For instance, $\left(
1\right)  $, $\left(  3,5\right)  $, $\left(  1,3,5\right)  $, $\left(
{}\right)  $ and $\left(  1,5\right)  $ are subwords of the word $\left(
1,3,5\right)  $. A \emph{factor} of a word $\left(  s_{1},s_{2},\ldots
,s_{k}\right)  $ will mean a word of the form $\left(  s_{i+1},s_{i+2}%
,\ldots,s_{i+m}\right)  $ for some $i\in\left\{  0,1,\ldots,k\right\}  $ and
some $m\in\left\{  0,1,\ldots,k-i\right\}  $. For instance, $\left(  1\right)
$, $\left(  3,5\right)  $, $\left(  1,3,5\right)  $ and $\left(  {}\right)  $
are factors of the word $\left(  1,3,5\right)  $, but $\left(  1,5\right)  $
is not.

We recall that a \emph{reduced expression} for an element $w\in W$ is a
$k$-tuple $\left(  s_{1},s_{2},\ldots,s_{k}\right)  $ of elements of $S$ such
that $w=s_{1}s_{2}\cdots s_{k}$, and such that $k$ is minimum (among all such
tuples). The length of a reduced expression for $w$ is called the
\emph{length} of $w$, and is denoted by $l\left(  w\right)  $. Thus, a reduced
expression for an element $w\in W$ is a $k$-tuple $\left(  s_{1},s_{2}%
,\ldots,s_{k}\right)  $ of elements of $S$ such that $w=s_{1}s_{2}\cdots
s_{k}$ and $k=l\left(  w\right)  $.

\begin{definition}
\label{def.braid}Let $w\in W$. Let $\overrightarrow{a}=\left(  a_{1}%
,a_{2},\ldots,a_{k}\right)  $ and $\overrightarrow{b}=\left(  b_{1}%
,b_{2},\ldots,b_{k}\right)  $ be two reduced expressions for $w$.

Let $\left(  s,t\right)  \in\mathfrak{M}$. We say that $\overrightarrow{b}$ is
obtained from $\overrightarrow{a}$ by an $\left(  s,t\right)  $\emph{-braid
move} if $\overrightarrow{b}$ can be obtained from $\overrightarrow{a}$ by
finding a factor of $\overrightarrow{a}$ of the form $\underbrace{\left(
s,t,s,t,s,\ldots\right)  }_{m_{s,t}\text{ elements}}$ and replacing it by
$\underbrace{\left(  t,s,t,s,t,\ldots\right)  }_{m_{s,t}\text{ elements}}$.

We notice that if $\overrightarrow{b}$ is obtained from $\overrightarrow{a}$
by an $\left(  s,t\right)  $-braid move, then $\overrightarrow{a}$ is obtained
from $\overrightarrow{b}$ by an $\left(  t,s\right)  $-braid move.
\end{definition}

\begin{definition}
\label{def.R}Let $w\in W$. We define an edge-colored directed graph
$\mathcal{R}\left(  w\right)  $, whose arcs are colored with elements of
$\mathfrak{M}/\sim$, as follows:

\begin{itemize}
\item The vertex set of $\mathcal{R}\left(  w\right)  $ shall be the set of
all reduced expressions for $w$.

\item The arcs of $\mathcal{R}\left(  w\right)  $ are defined as follows:
Whenever $\left(  s,t\right)  \in\mathfrak{M}$, and whenever
$\overrightarrow{a}$ and $\overrightarrow{b}$ are two reduced expressions for
$w$ such that $\overrightarrow{b}$ is obtained from $\overrightarrow{a}$ by an
$\left(  s,t\right)  $-braid move, we draw an arc from $\overrightarrow{a}$ to
$\overrightarrow{b}$ with color $\left[  \left(  s,t\right)  \right]  $.
\end{itemize}
\end{definition}

\begin{theorem}
\label{thm.BCL}Let $w\in W$. Let $C$ be a (directed) cycle in the graph
$\mathcal{R}\left(  w\right)  $. Let $c=\left[  \left(  s,t\right)  \right]
\in\mathfrak{M}/\sim$ be an equivalence class with respect to $\sim$. Let
$c^{\operatorname*{op}}$ be the equivalence class $\left[  \left(  t,s\right)
\right]  \in\mathfrak{M}/\sim$. Then:

\begin{enumerate}
\item[\textbf{(a)}] The number of arcs colored $c$ appearing in the cycle $C$
equals the number of arcs colored $c^{\operatorname*{op}}$ appearing in the
cycle $C$.

\item[\textbf{(b)}] The number of arcs whose color belongs to $\left\{
c,c^{\operatorname*{op}}\right\}  $ appearing in the cycle $C$ is even.
\end{enumerate}
\end{theorem}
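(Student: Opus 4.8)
The plan is to attach to every reduced expression a combinatorial shadow on which braid moves act in a transparent, local way, and then to read off parts \textbf{(a)} and \textbf{(b)} from how this shadow evolves as one walks around the cycle $C$.

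First I would pass from reduced expressions to their \emph{reflection sequences}. To a reduced expression $\overrightarrow{a}=\left(a_{1},\ldots,a_{k}\right)$ for $w$ I associate the tuple $\left(t_{1},\ldots,t_{k}\right)$ of reflections given by $t_{j}=a_{1}\cdots a_{j-1}\,a_{j}\,a_{j-1}\cdots a_{1}$. These are pairwise distinct, and their underlying set is the inversion set of $w$, hence is the \emph{same} for every vertex of $\mathcal{R}\left(w\right)$; only their order depends on $\overrightarrow{a}$. The first lemma I would establish is that one braid move reverses a contiguous block of this sequence: if $\overrightarrow{b}$ is obtained from $\overrightarrow{a}$ by an $\left(s,t\right)$-braid move applied to the factor in positions $i+1,\ldots,i+m_{s,t}$, and $u=a_{1}\cdots a_{i}$, then the block $\left(t_{i+1},\ldots,t_{i+m_{s,t}}\right)$ is exactly the list of all reflections of the rank-two parabolic subgroup $D=u\left\langle s,t\right\rangle u^{-1}$, each occurring once, and the move simply reverses it. A direct dihedral computation then shows that before the move the block begins with $usu^{-1}$ and ends with $utu^{-1}$, so that the arc's color $\left[\left(s,t\right)\right]$ is the conjugacy class of the ordered pair (first reflection of the block, last reflection of the block).

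Next, for each rank-two parabolic subgroup $D$ that is $W$-conjugate to some $\left\langle s,t\right\rangle$ with $\left(s,t\right)\in\mathfrak{M}$, I would record only the \emph{orientation} $\varepsilon_{D}\left(\overrightarrow{a}\right)\in\left\{+,-\right\}$ in which the reflections of $D$ that are inversions of $w$ appear inside $\left(t_{1},\ldots,t_{k}\right)$, relative to a fixed reference order on the reflections of $D$ (value $+$ if they occur in the reference order, $-$ if in its reverse). For this to be well defined I need that they always occur in one of these two orders, and the decisive input is that \emph{two distinct rank-two parabolic subgroups share at most one reflection}. This I would deduce from standard parabolic-subgroup theory: the pointwise stabilizer of any subspace of the reflection representation is parabolic, the intersection of two parabolics is parabolic, and a parabolic is determined by the fixed space of its reflections; two common reflections have independent roots, so their common reflections generate a rank-two parabolic contained in both, which, having the same rank, must equal each of them. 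Granting this, the block reversed by a move at $D_{0}$ meets any other such $D$ in at most one reflection, leaving the relative order of the $D$-reflections untouched; the only orientation the move changes is $\varepsilon_{D_{0}}$, which it flips, and by the color computation above the arc is colored $c$ or $c^{\operatorname{op}}$ according to whether this flip is $+\to-$ or $-\to+$.

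From here both parts follow by bookkeeping. Fix $c=\left[\left(s,t\right)\right]$ and let $\mathcal{D}$ be the set of rank-two parabolics conjugate to $\left\langle s,t\right\rangle$; the arcs of $C$ with color in $\left\{c,c^{\operatorname{op}}\right\}$ are precisely the moves acting on members of $\mathcal{D}$. For each fixed $D\in\mathcal{D}$ the quantity $\varepsilon_{D}$ returns to its starting value around $C$, so it is flipped an even number of times, and moreover its number of $+\to-$ flips equals its number of $-\to+$ flips. Summing the first fact over $D\in\mathcal{D}$ yields part \textbf{(b)} (an even total), and summing the second—after orienting each $D$ so that its color-$c$ moves are exactly the $+\to-$ flips, the case $c=c^{\operatorname{op}}$ being trivial—yields part \textbf{(a)}. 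The hard part will be the sharing lemma of the previous paragraph: everything downstream is routine, but the statement that a braid move never disturbs the orientation of a \emph{different} parabolic is where the Coxeter-theoretic content genuinely enters. It is also essential rather than cosmetic, since the corresponding statement is false for general rank-two \emph{reflection} subgroups; the restriction to honest conjugates of $\left\langle s,t\right\rangle$ (i.e.\ to parabolic subgroups) must be used.
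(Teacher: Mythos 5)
Your proposal follows the same skeleton as the paper's proof: pass to the reflection sequence $\operatorname{Invs}\overrightarrow{a}$ (Definition \ref{def.Invsle}), show that a braid move reverses a contiguous factor consisting of all reflections of a conjugated rank-two parabolic (Proposition \ref{prop.Invsles} \textbf{(b)}), and then do parity bookkeeping around the cycle, one dihedral object at a time. The two real differences are these. First, the paper tracks ordered pairs $\left(u,v\right)\in\mathfrak{N}=\bigcup_{x\in W}x\mathfrak{M}x^{-1}$ via the subword indicator $\operatorname{has}_{u,v}$, whereas you track subgroups $D$ together with an orientation; these encodings are essentially equivalent. Second, and more substantially, where you rest everything on the sharing lemma (two distinct rank-two parabolics share at most one reflection), proved by reflection-representation and parabolic-closure theory, the paper proves two self-contained algebraic lemmas (Lemma \ref{lem.dihindih}, via minimal-length coset representatives and \cite[Lemma 9.7]{Lusztig-Hecke}, and Lemma \ref{lem.GandH}) that do exactly the same job. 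Your route is viable, but since Theorem \ref{thm.BCL} concerns arbitrary Coxeter groups you need the infinite-group versions of your ``standard'' facts, and one of them is false as you state it: for infinite $W$ the pointwise stabilizer of an arbitrary subspace need not be parabolic (in the infinite dihedral group, $st$ fixes a line pointwise, so that line's stabilizer contains the non-parabolic subgroup $\left\langle st\right\rangle$); one must work with the Tits cone and with parabolic-closure results. That is precisely the geometric machinery the paper set out to avoid, so your proof is heavier, not lighter, than the paper's.

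Beyond that there is one genuine logical gap: you pass from conjugacy of \emph{subgroups} to conjugacy of \emph{pairs} without justification. Your claim that the arcs with color in $\left\{c,c^{\operatorname{op}}\right\}$ are \emph{precisely} the moves at members of $\mathcal{D}$, and your claim that each $D\in\mathcal{D}$ can be oriented so that its color-$c$ moves are exactly its $+\to-$ flips, both require the implication: if $q_{i}\left\langle s_{i},t_{i}\right\rangle q_{i}^{-1}$ is conjugate as a subgroup to $\left\langle s,t\right\rangle$, then $\left(s_{i},t_{i}\right)\sim\left(s,t\right)$ or $\left(s_{i},t_{i}\right)\sim\left(t,s\right)$. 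This is not formal: a conjugation identifying the two subgroups carries $\left\{s_{i},t_{i}\right\}$ to \emph{some} pair of reflections generating $\left\langle s,t\right\rangle$, not necessarily to $\left\{s,t\right\}$. Supplying this step is exactly the content of the paper's Lemma \ref{lem.dihindih}, and your toolkit does not obviously yield it; you must either prove it or restructure the count so it is not needed (for each $D$, all moves from one orientation share a color and all moves from the other share the opposite color, so either all moves at $D$ have color in $\left\{c,c^{\operatorname{op}}\right\}$ or none do, and summing over the former $D$ proves both parts). A smaller repair: the orientation $\varepsilon_{D}$ cannot be defined against an arbitrary fixed reference order. In $W=I_{2}\left(5\right)$, the pairs $\left(s,t\right)$ and $s\left(s,t\right)s^{-1}=\left(s,sts\right)$ both lie in $\mathfrak{N}$ and generate the same $D$, yet $\rho_{s,sts}$ is neither $\rho_{s,t}$ nor its reversal, so no single global reference order makes all conceivable blocks monotone. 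The fix is to take the reference order cycle-locally (the tuple of $D$-inversions at a base vertex of $C$) and propagate it along $C$ using your sharing lemma -- which is what your argument implicitly does, and all that it actually needs, since the color of a move is read off the first and last entries of the current tuple.
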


None of the parts \textbf{(a)} and \textbf{(b)} of Theorem \ref{thm.BCL} is a
trivial consequence of the other: When $c=c^{\operatorname*{op}}$, the
statement of Theorem \ref{thm.BCL} \textbf{(a)} is obvious and does not imply
part \textbf{(b)}.

Theorem \ref{thm.BCL} \textbf{(b)} generalizes \cite[Theorem 3.1]{BCL} in two
directions: First, Theorem \ref{thm.BCL} is stated for arbitrary Coxeter
groups, rather than only for finite Coxeter groups as in \cite{BCL}. Second,
in the terms of \cite[Remark 3.3]{BCL}, we are working with sets $Z$ that are
\textquotedblleft stabled by conjugation instead of
automorphism\textquotedblright.

\section{Inversions and the word $\rho_{s,t}$}

We shall now introduce some notations and state some auxiliary results that
will be used to prove Theorem \ref{thm.BCL}. Our strategy of proof is inspired
by that used in \cite[\S 3.4]{BCL} and thus (indirectly) also by that in
\cite[\S 3, and proof of Corollary 5.2]{ReiRoi11}; however, we shall avoid any
use of geometry (such as roots and hyperplane arrangements), and work entirely
with the Coxeter group itself.

We denote the subset $\bigcup\limits_{x\in W}xSx^{-1}$ of $W$ by $T$. The
elements of $T$ are called the \emph{reflections} (of $W$). They all have
order $2$. (The notation $T$ is used here in the same meaning as in
\cite[\S 1]{Lusztig-Hecke} and in \cite[Chapter 4, n$^{\circ}$ 1.4]%
{Bourbaki4-6}.)

\begin{definition}
\label{def.biset}For every $k\in\mathbb{N}$, we consider the set $W^{k}$ as a
left $W$-set by the rule%
\[
w\left(  w_{1},w_{2},\ldots,w_{k}\right)  =\left(  ww_{1},ww_{2},\ldots
,ww_{k}\right)  ,
\]
and as a right $W$-set by the rule%
\[
\left(  w_{1},w_{2},\ldots,w_{k}\right)  w=\left(  w_{1}w,w_{2}w,\ldots
,w_{k}w\right)  .
\]

\end{definition}

\begin{definition}
Let $s$ and $t$ be two distinct elements of $T$. Let $m_{s,t}$ denote the
order of the element $st\in W$. (This extends the definition of $m_{s,t}$ for
$s,t\in S$. Note that the distinctness of $s$ and $t$ entails $st\neq1$ (since
$s$ and $t$ have order $2$) and thus $m_{s,t}\geq2$.) Assume that
$m_{s,t}<\infty$. We let $D_{s,t}$ denote the subgroup of $W$ generated by $s$
and $t$. Then, $D_{s,t}$ is a dihedral group (since $s$ and $t$ are two
distinct nontrivial involutions, and since any group generated by two distinct
nontrivial involutions is dihedral). We denote by $\rho_{s,t}$ the word%
\[
\left(  \left(  st\right)  ^{0}s,\left(  st\right)  ^{1}s,\ldots,\left(
st\right)  ^{m_{s,t}-1}s\right)  =\left(  s,sts,ststs,\ldots
,\underbrace{ststs\cdots s}_{2m_{s,t}-1\text{ letters}}\right)  \in\left(
D_{s,t}\right)  ^{m_{s,t}}.
\]
Note that this word $\rho_{s,t}$ uniquely determines $s$ and $t$ (since it has
$m_{s,t}\geq2$ letters, and its first two letters are $s$ and $sts$, from
which we can easily reconstruct $s$ and $t$). In other words, the words
$\rho_{s,t}$ for different pairs $\left(  s,t\right)  $ are distinct.
\end{definition}

The \emph{reversal} of a word $\left(  a_{1},a_{2},\ldots,a_{k}\right)  $ is
defined to be the word $\left(  a_{k},a_{k-1},\ldots,a_{1}\right)  $.

The following proposition collects some simple properties of the words
$\rho_{s,t}$.

\begin{proposition}
\label{prop.rhost}Let $s$ and $t$ be two distinct elements of $T$ such that
$m_{s,t}<\infty$. Then:

\begin{enumerate}
\item[\textbf{(a)}] The word $\rho_{s,t}$ consists of reflections in $D_{s,t}%
$, and contains every reflection in $D_{s,t}$ exactly once.

\item[\textbf{(b)}] The word $\rho_{t,s}$ is the reversal of the word
$\rho_{s,t}$.

\item[\textbf{(c)}] Let $q\in W$. Then, the word $q\rho_{t,s}q^{-1}$ is the
reversal of the word $q\rho_{s,t}q^{-1}$.
\end{enumerate}
\end{proposition}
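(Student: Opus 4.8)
The plan is to prove the three parts of Proposition~\ref{prop.rhost} in order, since part~\textbf{(b)} feeds directly into part~\textbf{(c)}, and part~\textbf{(a)} can be handled independently by a direct computation inside the dihedral group $D_{s,t}$.

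For part~\textbf{(a)}, I would first recall the structure of the dihedral group $D_{s,t}$ of order $2m_{s,t}$. Since $s$ and $t$ are involutions with $st$ of order $m:=m_{s,t}$, the group $D_{s,t}$ contains exactly $m$ reflections, namely the conjugates of $s$ and $t$ by powers of $st$. I would show that the $m$ entries $\left(st\right)^{k}s$ for $k\in\left\{0,1,\ldots,m-1\right\}$ of $\rho_{s,t}$ are pairwise distinct and each is a reflection in $D_{s,t}$. Each entry is a reflection because $\left(st\right)^{k}s=\left(st\right)^{k}s\left(st\right)^{-k}\cdot\left(st\right)^{k}$, and more cleanly because $\left(\left(st\right)^{k}s\right)^{2}=\id$ (one checks $\left(st\right)^{k}s\left(st\right)^{k}s=\left(st\right)^{k}\left(st\right)^{-k}=\id$, using $s\left(st\right)s=\left(ts\right)=\left(st\right)^{-1}$). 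For distinctness, if $\left(st\right)^{j}s=\left(st\right)^{k}s$ then $\left(st\right)^{j}=\left(st\right)^{k}$, forcing $j\equiv k\pmod m$, so the $m$ entries are distinct; since $D_{s,t}$ has exactly $m$ reflections, the word $\rho_{s,t}$ lists each of them exactly once.

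For part~\textbf{(b)}, the key observation is that the reflections $\left(st\right)^{k}s$ can be rewritten in terms of $ts$. Using $\left(st\right)^{-1}=ts$ and the involution relations, I would verify the identity $\left(st\right)^{k}s=\left(ts\right)^{m-1-k}t$ for each $k\in\left\{0,1,\ldots,m-1\right\}$; this follows because $\left(st\right)^{k}s$ and $\left(ts\right)^{m-1-k}t$ are both reflections, and comparing them amounts to checking $\left(st\right)^{k}s\left(ts\right)^{-\left(m-1-k\right)}=t$, which reduces to a power-of-$st$ computation using $st\cdot ts=\id$ is wrong---so more carefully, I multiply out $\left(ts\right)^{m-1-k}t=\left(st\right)^{-\left(m-1-k\right)}t$ and use $\left(st\right)^{m}=\id$ to convert the negative power, landing on $\left(st\right)^{k+1}t=\left(st\right)^{k}\cdot st\cdot t=\left(st\right)^{k}s$. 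Granting this identity, the $j$-th entry of $\rho_{t,s}$ (which is $\left(ts\right)^{j}t$ for $j=0,1,\ldots,m-1$) equals the $\left(m-1-j\right)$-th entry of $\rho_{s,t}$, which says precisely that $\rho_{t,s}$ is the reversal of $\rho_{s,t}$.

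Part~\textbf{(c)} is then immediate: conjugation by a fixed $q\in W$ is applied entrywise to a word, so it commutes with reversal. Explicitly, writing $\rho_{s,t}=\left(r_{1},r_{2},\ldots,r_{m}\right)$, part~\textbf{(b)} gives $\rho_{t,s}=\left(r_{m},r_{m-1},\ldots,r_{1}\right)$; conjugating each entry by $q$ yields $q\rho_{t,s}q^{-1}=\left(qr_{m}q^{-1},\ldots,qr_{1}q^{-1}\right)$, which is exactly the reversal of $q\rho_{s,t}q^{-1}=\left(qr_{1}q^{-1},\ldots,qr_{m}q^{-1}\right)$. I expect the only mildly delicate step to be the index-shifting identity in part~\textbf{(b)}; everything else is bookkeeping inside the dihedral group, where the single relation $s\left(st\right)s=\left(st\right)^{-1}$ governs all the manipulations.
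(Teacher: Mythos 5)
Your parts (b) and (c) are correct and essentially identical to the paper's own proof: you establish the index-reversing identity $\left(st\right)^{k}s=\left(ts\right)^{m-1-k}t$ by the same power-of-$st$ manipulation (your self-corrected computation does land in the right place), and then observe that entrywise conjugation commutes with reversal. The problems are in part (a), and both stem from what \textquotedblleft reflection\textquotedblright\ means here: a reflection is by definition an element of $T=\bigcup_{x\in W}xSx^{-1}$, i.e.\ an element conjugate \emph{in $W$} to a simple reflection. This is not the same as \textquotedblleft involution\textquotedblright, nor is it a priori the same as \textquotedblleft reflection of the abstract dihedral group $D_{s,t}$\textquotedblright.

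First, your justification that each entry $\left(st\right)^{k}s$ is a reflection is that it squares to the identity. That is not a valid argument: an involution need not lie in $T$ (when $m_{s,t}$ is even, the rotation $\left(st\right)^{m_{s,t}/2}$ is an involution, and it is precisely the kind of element one must rule out). The correct argument -- the paper's Claim 1 -- writes $\left(st\right)^{k}s=stst\cdots s$ ($2k+1$ letters) as a palindrome $wsw^{-1}$ or $wtw^{-1}$, where $w$ is the word formed by the first $k$ letters, thus exhibiting it as a conjugate of $s$ or of $t$, hence an element of $T$ (since $s,t\in T$ and $T$ is closed under conjugation). Second, and more substantively: you assert that $D_{s,t}$ contains \emph{exactly} $m$ reflections, namely the conjugates of $s$ and $t$ by powers of $st$. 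But the hard half of this assertion -- that no element of the cyclic subgroup generated by $st$ lies in $T$ -- is exactly what the completeness claim (\textquotedblleft$\rho_{s,t}$ contains \emph{every} reflection in $D_{s,t}$\textquotedblright) requires, and you give no proof of it. Since membership in $T$ refers to conjugacy by arbitrary elements of $W$, one cannot simply quote the internal structure of the abstract dihedral group: a priori some rotation $\left(st\right)^{j}$ could be conjugate in $W$ to an element of $S$. The paper closes this gap with the sign character $\operatorname{sgn}\colon W\rightarrow\left\{1,-1\right\}$: every element of $T$ has sign $-1$, while the restricted homomorphism $\operatorname{sgn}\mid_{D_{s,t}}$ sends at most $m_{s,t}$ elements of $D_{s,t}$ to $-1$; hence $D_{s,t}$ has at most $m_{s,t}$ reflections, and the $m_{s,t}$ distinct reflections already exhibited as entries of $\rho_{s,t}$ must exhaust them. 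Without this (or an equivalent) argument, part (a) is incomplete.
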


\begin{proof}
[Proof of Proposition \ref{prop.rhost}.]\textbf{(a)} We need to prove three claims:

\textit{Claim 1:} Every entry of the word $\rho_{s,t}$ is a reflection in
$D_{s,t}$.

\textit{Claim 2:} The entries of the word $\rho_{s,t}$ are distinct.

\textit{Claim 3:} Every reflection in $D_{s,t}$ is an entry of the word
$\rho_{s,t}$.

\textit{Proof of Claim 1:} We must show that $\left(  st\right)  ^{k}s$ is a
reflection in $D_{s,t}$ for every $k\in\left\{  0,1,\ldots,m_{s,t}-1\right\}
$. Thus, fix $k\in\left\{  0,1,\ldots,m_{s,t}-1\right\}  $. Then,%
\begin{align*}
\left(  st\right)  ^{k}s  &  =\underbrace{stst\cdots s}_{2k+1\text{ letters}}=%
\begin{cases}
\underbrace{stst\cdots t}_{k\text{ letters}}s\underbrace{tsts\cdots
s}_{k\text{ letters}}, & \text{if }k\text{ is even};\\
\underbrace{stst\cdots s}_{k\text{ letters}}t\underbrace{stst\cdots
s}_{k\text{ letters}}, & \text{if }k\text{ is odd}%
\end{cases}
\\
&  =%
\begin{cases}
\underbrace{stst\cdots t}_{k\text{ letters}}s\left(  \underbrace{stst\cdots
t}_{k\text{ letters}}\right)  ^{-1}, & \text{if }k\text{ is even};\\
\underbrace{stst\cdots s}_{k\text{ letters}}t\left(  \underbrace{stst\cdots
s}_{k\text{ letters}}\right)  ^{-1}, & \text{if }k\text{ is odd}%
\end{cases}
\\
&  \ \ \ \ \ \ \ \ \ \ \left(
\begin{array}
[c]{c}%
\text{since }\underbrace{tsts\cdots s}_{k\text{ letters}}=\left(
\underbrace{stst\cdots t}_{k\text{ letters}}\right)  ^{-1}\text{ if }k\text{
is even,}\\
\text{and }\underbrace{stst\cdots s}_{k\text{ letters}}=\left(
\underbrace{stst\cdots s}_{k\text{ letters}}\right)  ^{-1}\text{ if }k\text{
is odd}%
\end{array}
\right)  .
\end{align*}
Hence, $\left(  st\right)  ^{k}s$ is conjugate to either $s$ or $t$ (depending
on whether $k$ is even or odd). Thus, $\left(  st\right)  ^{k}s$ is a
reflection. Also, it clearly lies in $D_{s,t}$. This proves Claim 1.

\textit{Proof of Claim 2:} The element $st$ of $W$ has order $m_{s,t}$. Thus,
the elements $\left(  st\right)  ^{0},\left(  st\right)  ^{1},\ldots,\left(
st\right)  ^{m_{s,t}-1}$ are all distinct. Hence, the elements $\left(
st\right)  ^{0}s,\left(  st\right)  ^{1}s,\ldots,\left(  st\right)
^{m_{s,t}-1}s$ are all distinct. In other words, the entries of the word
$\rho_{s,t}$ are all distinct. Claim 2 is proven.

\textit{Proof of Claim 3:} The dihedral group $D_{s,t}$ has $2m_{s,t}$
elements\footnote{since it is generated by two distinct involutions $s\neq1$
and $t\neq1$ whose product $st$ has order $m_{s,t}$}, of which at most
$m_{s,t}$ are reflections\footnote{\textit{Proof.} Consider the group
homomorphism $\operatorname*{sgn}:W\rightarrow\left\{  1,-1\right\}  $ defined
in \cite[\S 1.1]{Lusztig-Hecke}. The group homomorphism $\operatorname*{sgn}%
\mid_{D_{s,t}}:D_{s,t}\rightarrow\left\{  1,-1\right\}  $ sends either none or
$m_{s,t}$ elements of $D_{s,t}$ to $-1$. Thus, this homomorphism
$\operatorname*{sgn}\mid_{D_{s,t}}$ sends at most $m_{s,t}$ elements of
$D_{s,t}$ to $-1$. Since it must send every reflection to $-1$, this shows
that at most $m_{s,t}$ elements of $D_{s,t}$ are reflections.
\par
(Actually, we can replace \textquotedblleft at most\textquotedblright\ by
\textquotedblleft exactly\textquotedblright\ here, but we won't need this.)}.
But the word $\rho_{s,t}$ has $m_{s,t}$ entries, and all its entries are
reflections in $D_{s,t}$ (by Claim 1); hence, it contains $m_{s,t}$
reflections in $D_{s,t}$ (by Claim 2). Since $D_{s,t}$ has only at most
$m_{s,t}$ reflections, this shows that every reflection in $D_{s,t}$ is an
entry of the word $\rho_{s,t}$. Claim 3 is proven.

This finishes the proof of Proposition \ref{prop.rhost} \textbf{(a)}. \medskip

\textbf{(b)} We have $\rho_{s,t}=\left(  \left(  st\right)  ^{0}s,\left(
st\right)  ^{1}s,\ldots,\left(  st\right)  ^{m_{s,t}-1}s\right)  $ and
\newline$\rho_{t,s}=\left(  \left(  ts\right)  ^{0}t,\left(  ts\right)
^{1}t,\ldots,\left(  ts\right)  ^{m_{s,t}-1}t\right)  $ (since $m_{t,s}%
=m_{s,t}$). Thus, in order to prove Proposition \ref{prop.rhost} \textbf{(b)},
we must merely show that $\left(  st\right)  ^{k}s=\left(  ts\right)
^{m_{s,t}-1-k}t$ for every $k\in\left\{  0,1,\ldots,m_{s,t}-1\right\}  $.

So fix $k\in\left\{  0,1,\ldots,m_{s,t}-1\right\}  $. Then,%
\begin{align*}
\left(  st\right)  ^{k}s\cdot\left(  \left(  ts\right)  ^{m_{s,t}%
-1-k}t\right)  ^{-1}  &  =\left(  st\right)  ^{k}s\underbrace{t^{-1}}%
_{=t}\underbrace{\left(  \left(  ts\right)  ^{m_{s,t}-1-k}\right)  ^{-1}%
}_{=\left(  s^{-1}t^{-1}\right)  ^{m_{s,t}-1-k}}=\underbrace{\left(
st\right)  ^{k}st}_{=\left(  st\right)  ^{k+1}}\left(  \underbrace{s^{-1}%
}_{=s}\underbrace{t^{-1}}_{=t}\right)  ^{m_{s,t}-1-k}\\
&  =\left(  st\right)  ^{k+1}\left(  st\right)  ^{m_{s,t}-1-k}=\left(
st\right)  ^{m_{s,t}}=1,
\end{align*}
so that $\left(  st\right)  ^{k}s=\left(  ts\right)  ^{m_{s,t}-1-k}t$. This
proves Proposition \ref{prop.rhost} \textbf{(b)}. \medskip

\textbf{(c)} Let $q\in W$. Proposition \ref{prop.rhost} \textbf{(b)} shows
that the word $\rho_{t,s}$ is the reversal of the word $\rho_{s,t}$. Hence,
the word $q\rho_{t,s}q^{-1}$ is the reversal of the word $q\rho_{s,t}q^{-1}$
(since the word $q\rho_{t,s}q^{-1}$ is obtained from $\rho_{t,s}$ by
conjugating each letter by $q$, and the word $q\rho_{s,t}q^{-1}$ is obtained
from $\rho_{s,t}$ in the same way). This proves Proposition \ref{prop.rhost}
\textbf{(c)}.
\end{proof}

\begin{definition}
\label{def.Invsle}Let $\overrightarrow{a}=\left(  a_{1},a_{2},\ldots
,a_{k}\right)  \in S^{k}$. Then, $\operatorname*{Invs}\overrightarrow{a}$ is
defined to be the $k$-tuple $\left(  t_{1},t_{2},\ldots,t_{k}\right)  \in
T^{k}$, where we set%
\[
t_{i}=\left(  a_{1}a_{2}\cdots a_{i-1}\right)  a_{i}\left(  a_{1}a_{2}\cdots
a_{i-1}\right)  ^{-1}\ \ \ \ \ \ \ \ \ \ \text{for every }i\in\left\{
1,2,\ldots,k\right\}  .
\]

\end{definition}

\begin{remark}
Let $w\in W$. Let $\overrightarrow{a}=\left(  a_{1},a_{2},\ldots,a_{k}\right)
$ be a reduced expression for $w$. The $k$-tuple $\operatorname*{Invs}%
\overrightarrow{a}$ is denoted by $\Phi\left(  \overrightarrow{a}\right)  $ in
\cite[Chapter 4, n$^{\circ}$ 1.4]{Bourbaki4-6}, and is closely connected to
various standard constructions in Coxeter group theory. A well-known fact
states that the set of all entries of $\operatorname*{Invs}\overrightarrow{a}$
depends only on $w$ (but not on $\overrightarrow{a}$); this set is called the
\emph{(left) inversion set} of $w$. The $k$-tuple $\operatorname*{Invs}%
\overrightarrow{a}$ contains each element of this set exactly once (see
Proposition \ref{prop.Invsles} below); it thus induces a total order on this set.
\end{remark}

\begin{proposition}
\label{prop.Invsles}Let $w\in W$.

\begin{enumerate}
\item[\textbf{(a)}] If $\overrightarrow{a}$ is a reduced expression for $w$,
then all entries of the tuple $\operatorname*{Invs}\overrightarrow{a}$ are distinct.

\item[\textbf{(b)}] Let $\left(  s,t\right)  \in\mathfrak{M}$. Let
$\overrightarrow{a}$ and $\overrightarrow{b}$ be two reduced expressions for
$w$ such that $\overrightarrow{b}$ is obtained from $\overrightarrow{a}$ by an
$\left(  s,t\right)  $-braid move. Then, there exists a $q\in W$ such that
$\operatorname*{Invs}\overrightarrow{b}$ is obtained from
$\operatorname*{Invs}\overrightarrow{a}$ by replacing a particular factor of
the form $q\rho_{s,t}q^{-1}$ by its reversal\footnotemark.
\end{enumerate}
\end{proposition}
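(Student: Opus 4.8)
The plan is to treat the two parts separately: part \textbf{(a)} rests on a single exchange-type cancellation, while part \textbf{(b)} comes down to identifying explicitly the one factor of $\operatorname{Invs}\overrightarrow{a}$ that is altered by the braid move, and recognizing it as a conjugate of $\rho_{s,t}$.

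For part \textbf{(a)}, write $\overrightarrow{a}=(a_1,\ldots,a_k)$ and $t_i=(a_1\cdots a_{i-1})a_i(a_1\cdots a_{i-1})^{-1}$. The starting observation is that $t_i w=a_1\cdots a_{i-1}a_{i+1}\cdots a_k$ (the letter $a_i$ is deleted), since the conjugating prefix cancels against the first $i-1$ letters of $w$ and $a_i^2=1$; in particular this has length $k-1$. Now I would argue by contradiction: if $t_i=t_j$ for some $i<j$, then $t_i w=t_j w$, and cancelling the common prefix $a_1\cdots a_{i-1}$ and the common suffix $a_{j+1}\cdots a_k$ yields $a_{i+1}a_{i+2}\cdots a_j=a_i a_{i+1}\cdots a_{j-1}$. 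Substituting this back into $w=a_1\cdots a_{i-1}(a_i\cdots a_{j-1})a_j\cdots a_k$ and using $a_j^2=1$ produces an expression for $w$ of length $k-2$, contradicting the minimality of $k=l(w)$. Hence all entries of $\operatorname{Invs}\overrightarrow{a}$ are distinct.

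For part \textbf{(b)}, let the $(s,t)$-braid move act on the factor occupying positions $i+1,\ldots,i+m$ (with $m=m_{s,t}$), so that $a_{i+1},a_{i+2},\ldots$ alternate $s,t,s,\ldots$ and $\overrightarrow{b}$ replaces them by $t,s,t,\ldots$, while $a_l=b_l$ for all $l\le i$ and all $l>i+m$. Set $q=a_1 a_2\cdots a_i$. The first task is to check that the entries of $\operatorname{Invs}\overrightarrow{b}$ outside positions $i+1,\ldots,i+m$ coincide with those of $\operatorname{Invs}\overrightarrow{a}$: for positions $\le i$ this is immediate, and for positions $>i+m$ it follows once I observe that $a_1\cdots a_{i+m}=b_1\cdots b_{i+m}$ — both equal $q\cdot\underbrace{sts\cdots}_{m}$ by the braid relation $\underbrace{sts\cdots}_{m}=\underbrace{tst\cdots}_{m}$ underlying the move — so all prefix products and letters agree from position $i+m$ onward. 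The crux is then to identify the two variable factors: for $r\in\{1,\ldots,m\}$ I would compute the $(i+r)$-th entry of $\operatorname{Invs}\overrightarrow{a}$ by writing $a_1\cdots a_{i+r-1}=q\cdot\underbrace{sts\cdots}_{r-1}$ and using $a_{i+r}$ to extend the alternating word, so that the conjugate telescopes to $q(st)^{r-1}sq^{-1}$, which is precisely the $r$-th entry of $q\rho_{s,t}q^{-1}$. Thus the factor of $\operatorname{Invs}\overrightarrow{a}$ at positions $i+1,\ldots,i+m$ equals $q\rho_{s,t}q^{-1}$, and the identical computation with $s$ and $t$ interchanged shows the corresponding factor of $\operatorname{Invs}\overrightarrow{b}$ equals $q\rho_{t,s}q^{-1}$ (the prefix $q$ being the same since $a_l=b_l$ for $l\le i$). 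By Proposition \ref{prop.rhost} \textbf{(c)}, $q\rho_{t,s}q^{-1}$ is the reversal of $q\rho_{s,t}q^{-1}$, which is exactly the assertion.

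The hard part will be the bookkeeping in the telescoping identity $t_{i+r}=q(st)^{r-1}sq^{-1}$: one must track the alternating prefixes $\underbrace{sts\cdots}_{r-1}$ carefully (distinguishing the parities of $r$) to verify that conjugating $a_{i+r}$ by this prefix collapses precisely to $(st)^{r-1}s$. Once this identity is in place, the agreement of the outside entries is a routine cancellation and Proposition \ref{prop.rhost} supplies the remaining step.
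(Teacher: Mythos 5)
Your proof of part \textbf{(b)} is essentially the paper's own: the same choice of $q$ as the prefix product $a_1\cdots a_i$, the same splitting into unchanged entries (positions $\le i$ and $>i+m$, the latter via the braid relation making the two prefix products agree), the same telescoping computation identifying the altered factor of $\operatorname*{Invs}\overrightarrow{a}$ with $q\rho_{s,t}q^{-1}$ and the corresponding factor of $\operatorname*{Invs}\overrightarrow{b}$ with $q\rho_{t,s}q^{-1}$, and the same appeal to Proposition \ref{prop.rhost} \textbf{(c)}. One small remark: the parity bookkeeping you flag as \textquotedblleft the hard part\textquotedblright\ is not actually needed, since the product
\[
\underbrace{stst\cdots}_{r\text{ letters}}\cdot\underbrace{\cdots tsts}_{r-1\text{ letters}}=\underbrace{stst\cdots s}_{2r-1\text{ letters}}=\left(st\right)^{r-1}s
\]
collapses uniformly in $r$, which is exactly how the paper carries out this step without any case distinction. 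The only genuine divergence is in part \textbf{(a)}: the paper simply cites Lusztig's Proposition 1.6(a) (equivalently Bourbaki's Lemme 2) for the distinctness of the elements $a_1a_2\cdots a_{i-1}a_ia_{i-1}\cdots a_2a_1$, whereas you re-prove that fact from scratch by the standard cancellation argument --- if $t_i=t_j$ with $i<j$, then from $t_iw=t_jw$ one cancels the common prefix and suffix to get $a_{i+1}\cdots a_j=a_ia_{i+1}\cdots a_{j-1}$, and substituting back deletes both $a_i$ and $a_j$, producing an expression of length $k-2$ and contradicting reducedness. That argument is correct; what it buys you is a self-contained proof, at the cost of reproducing a textbook lemma that the paper deliberately outsources.
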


\footnotetext{See Definition \ref{def.biset} for the meaning of $q\rho
_{s,t}q^{-1}$.}

\begin{proof}
[Proof of Proposition \ref{prop.Invsles}.]Let $\overrightarrow{a}$ be a
reduced expression for $w$. Write $\overrightarrow{a}$ as $\left(  a_{1}%
,a_{2},\ldots,a_{k}\right)  $. Then, the definition of $\operatorname*{Invs}%
\overrightarrow{a}$ shows that $\operatorname*{Invs}\overrightarrow{a}=\left(
t_{1},t_{2},\ldots,t_{k}\right)  $, where the $t_{i}$ are defined by%
\[
t_{i}=\left(  a_{1}a_{2}\cdots a_{i-1}\right)  a_{i}\left(  a_{1}a_{2}\cdots
a_{i-1}\right)  ^{-1}\ \ \ \ \ \ \ \ \ \ \text{for every }i\in\left\{
1,2,\ldots,k\right\}  .
\]
Now, every $i\in\left\{  1,2,\ldots,k\right\}  $ satisfies%
\begin{align*}
t_{i}  &  =\left(  a_{1}a_{2}\cdots a_{i-1}\right)  a_{i}\underbrace{\left(
a_{1}a_{2}\cdots a_{i-1}\right)  ^{-1}}_{\substack{=a_{i-1}^{-1}a_{i-2}%
^{-1}\cdots a_{1}^{-1}=a_{i-1}a_{i-2}\cdots a_{1}\\\text{(since each }%
a_{j}\text{ belongs to }S\text{)}}}=\left(  a_{1}a_{2}\cdots a_{i-1}\right)
a_{i}\left(  a_{i-1}a_{i-2}\cdots a_{1}\right) \\
&  =a_{1}a_{2}\cdots a_{i-1}a_{i}a_{i-1}\cdots a_{2}a_{1}.
\end{align*}
But \cite[Proposition 1.6 (a)]{Lusztig-Hecke} (applied to $q=k$ and
$s_{i}=a_{i}$) shows that the elements $a_{1},a_{1}a_{2}a_{1},a_{1}a_{2}%
a_{3}a_{2}a_{1},\ldots,a_{1}a_{2}\cdots a_{k-1}a_{k}a_{k-1}\cdots a_{2}a_{1}$
are distinct\footnote{This also follows from \cite[Chapter 4, n$^{\circ}$ 1.4,
Lemme 2]{Bourbaki4-6}.}. In other words, the elements $t_{1},t_{2}%
,\ldots,t_{k}$ are distinct (since \newline$t_{i}=a_{1}a_{2}\cdots
a_{i-1}a_{i}a_{i-1}\cdots a_{2}a_{1}$ for every $i\in\left\{  1,2,\ldots
,k\right\}  $). In other words, all entries of the tuple $\operatorname*{Invs}%
\overrightarrow{a}$ are distinct. Proposition \ref{prop.Invsles} \textbf{(a)}
is proven. \medskip

\textbf{(b)} We need to prove that there exists a $q\in W$ such that
$\operatorname*{Invs}\overrightarrow{b}$ is obtained from
$\operatorname*{Invs}\overrightarrow{a}$ by replacing a particular factor of
the form $q\rho_{s,t}q^{-1}$ by its reversal.

We set $m=m_{s,t}$ (for the sake of brevity).

Write $\overrightarrow{a}$ as $\left(  a_{1},a_{2},\ldots,a_{k}\right)  $.

The word $\overrightarrow{b}$ can be obtained from $\overrightarrow{a}$ by an
$\left(  s,t\right)  $-braid move. In other words, the word
$\overrightarrow{b}$ can be obtained from $\overrightarrow{a}$ by finding a
factor of $\overrightarrow{a}$ of the form $\underbrace{\left(
s,t,s,t,s,\ldots\right)  }_{m\text{ elements}}$ and replacing it by
$\underbrace{\left(  t,s,t,s,t,\ldots\right)  }_{m\text{ elements}}$ (by the
definition of an \textquotedblleft$\left(  s,t\right)  $-braid
move\textquotedblright, since $m_{s,t}=m$). In other words, there exists an
$p\in\left\{  0,1,\ldots,k-m\right\}  $ such that $\left(  a_{p+1}%
,a_{p+2},\ldots,a_{p+m}\right)  =\underbrace{\left(  s,t,s,t,s,\ldots\right)
}_{m\text{ elements}}$, and the word $\overrightarrow{b}$ can be obtained by
replacing the $\left(  p+1\right)  $-st through $\left(  p+m\right)  $-th
entries of $\overrightarrow{a}$ by $\underbrace{\left(  t,s,t,s,t,\ldots
\right)  }_{m\text{ elements}}$. Consider this $p$. Write $\overrightarrow{b}$
as $\left(  b_{1},b_{2},\ldots,b_{k}\right)  $ (this is possible since the
tuple $\overrightarrow{b}$ has the same length as $\overrightarrow{a}$). Thus,%
\begin{align}
\left(  a_{1},a_{2},\ldots,a_{p}\right)   &  =\left(  b_{1},b_{2},\ldots
,b_{p}\right)  ,\label{pf.prop.Invsles.b.1}\\
\left(  a_{p+1},a_{p+2},\ldots,a_{p+m}\right)   &  =\underbrace{\left(
s,t,s,t,s,\ldots\right)  }_{m\text{ elements}},\label{pf.prop.Invsles.b.2}\\
\left(  b_{p+1},b_{p+2},\ldots,b_{p+m}\right)   &  =\underbrace{\left(
t,s,t,s,t,\ldots\right)  }_{m\text{ elements}},\label{pf.prop.Invsles.b.3}\\
\left(  a_{p+m+1},a_{p+m+2},\ldots,a_{k}\right)   &  =\left(  b_{p+m+1}%
,b_{p+m+2},\ldots,b_{k}\right)  . \label{pf.prop.Invsles.b.4}%
\end{align}
Write the $k$-tuples $\operatorname*{Invs}\overrightarrow{a}$ and
$\operatorname*{Invs}\overrightarrow{b}$ as $\left(  \alpha_{1},\alpha
_{2},\ldots,\alpha_{k}\right)  $ and $\left(  \beta_{1},\beta_{2},\ldots
,\beta_{k}\right)  $, respectively. Their definitions show that%
\begin{equation}
\alpha_{i}=\left(  a_{1}a_{2}\cdots a_{i-1}\right)  a_{i}\left(  a_{1}%
a_{2}\cdots a_{i-1}\right)  ^{-1} \label{pf.prop.Invsles.b.alpha}%
\end{equation}
and%
\begin{equation}
\beta_{i}=\left(  b_{1}b_{2}\cdots b_{i-1}\right)  b_{i}\left(  b_{1}%
b_{2}\cdots b_{i-1}\right)  ^{-1} \label{pf.prop.Invsles.b.beta}%
\end{equation}
for every $i\in\left\{  1,2,\ldots,k\right\}  $.

Now, set $q=a_{1}a_{2}\cdots a_{p}$. From (\ref{pf.prop.Invsles.b.1}), we see
that $q=b_{1}b_{2}\cdots b_{p}$ as well. In order to prove Proposition
\ref{prop.Invsles} \textbf{(b)}, it clearly suffices to show that
$\operatorname*{Invs}\overrightarrow{b}$ is obtained from
$\operatorname*{Invs}\overrightarrow{a}$ by replacing a particular factor of
the form $q\rho_{s,t}q^{-1}$ -- namely, the factor $\left(  \alpha
_{p+1},\alpha_{p+2},\ldots,\alpha_{p+m}\right)  $ -- by its reversal.

So let us show this. In view of $\operatorname*{Invs}\overrightarrow{a}%
=\left(  \alpha_{1},\alpha_{2},\ldots,\alpha_{k}\right)  $ and
$\operatorname*{Invs}\overrightarrow{b}=\left(  \beta_{1},\beta_{2}%
,\ldots,\beta_{k}\right)  $, it clearly suffices to prove the following claims:

\textit{Claim 1:} We have $\beta_{i}=\alpha_{i}$ for every $i\in\left\{
1,2,\ldots,p\right\}  $.

\textit{Claim 2:} We have $\left(  \alpha_{p+1},\alpha_{p+2},\ldots
,\alpha_{p+m}\right)  =q\rho_{s,t}q^{-1}$.

\textit{Claim 3:} The $m$-tuple $\left(  \beta_{p+1},\beta_{p+2},\ldots
,\beta_{p+m}\right)  $ is the reversal of $\left(  \alpha_{p+1},\alpha
_{p+2},\ldots,\alpha_{p+m}\right)  $.

\textit{Claim 4:} We have $\beta_{i}=\alpha_{i}$ for every $i\in\left\{
p+m+1,p+m+2,\ldots,k\right\}  $.

\textit{Proof of Claim 1:} Let $i\in\left\{  1,2,\ldots,p\right\}  $. Then,
(\ref{pf.prop.Invsles.b.1}) shows that $a_{g}=b_{g}$ for every $g\in\left\{
1,2,\ldots,i\right\}  $. Now, (\ref{pf.prop.Invsles.b.alpha}) becomes%
\begin{align*}
\alpha_{i}  &  =\left(  a_{1}a_{2}\cdots a_{i-1}\right)  a_{i}\left(
a_{1}a_{2}\cdots a_{i-1}\right)  ^{-1}=\left(  b_{1}b_{2}\cdots b_{i-1}%
\right)  b_{i}\left(  b_{1}b_{2}\cdots b_{i-1}\right)  ^{-1}\\
&  \ \ \ \ \ \ \ \ \ \ \left(  \text{since }a_{g}=b_{g}\text{ for every }%
g\in\left\{  1,2,\ldots,i\right\}  \right) \\
&  =\beta_{i}\ \ \ \ \ \ \ \ \ \ \left(  \text{by
(\ref{pf.prop.Invsles.b.beta})}\right)  .
\end{align*}
This proves Claim 1.

\textit{Proof of Claim 2:} We have%
\[
\rho_{s,t}=\left(  \left(  st\right)  ^{0}s,\left(  st\right)  ^{1}%
s,\ldots,\left(  st\right)  ^{m_{s,t}-1}s\right)  =\left(  \left(  st\right)
^{0}s,\left(  st\right)  ^{1}s,\ldots,\left(  st\right)  ^{m-1}s\right)
\]
(since $m_{s,t}=m$). Hence,%
\begin{align*}
q\rho_{s,t}q^{-1}  &  =q\left(  \left(  st\right)  ^{0}s,\left(  st\right)
^{1}s,\ldots,\left(  st\right)  ^{m-1}s\right)  q^{-1}\\
&  =\left(  q\left(  st\right)  ^{0}sq^{-1},q\left(  st\right)  ^{1}%
sq^{-1},\ldots,q\left(  st\right)  ^{m-1}sq^{-1}\right)  .
\end{align*}
Thus, in order to prove $\left(  \alpha_{p+1},\alpha_{p+2},\ldots,\alpha
_{p+m}\right)  =q\rho_{s,t}q^{-1}$, it suffices to show that $\alpha
_{p+i}=q\left(  st\right)  ^{i-1}sq^{-1}$ for every $i\in\left\{
1,2,\ldots,m\right\}  $. So let us fix $i\in\left\{  1,2,\ldots,m\right\}  $.

We have%
\[
a_{1}a_{2}\cdots a_{p+i-1}=\underbrace{\left(  a_{1}a_{2}\cdots a_{p}\right)
}_{=q}\underbrace{\left(  a_{p+1}a_{p+2}\cdots a_{p+i-1}\right)
}_{\substack{=\underbrace{stst\cdots}_{i-1\text{ letters}}\\\text{(by
(\ref{pf.prop.Invsles.b.2}))}}}=q\underbrace{stst\cdots}_{i-1\text{ letters}%
}.
\]
Hence,%
\begin{align*}
\left(  a_{1}a_{2}\cdots a_{p+i-1}\right)  ^{-1}  &  =\left(
q\underbrace{stst\cdots}_{i-1\text{ letters}}\right)  ^{-1}=\underbrace{\cdots
t^{-1}s^{-1}t^{-1}s^{-1}}_{i-1\text{ letters}}q^{-1}\\
&  =\underbrace{\cdots tsts}_{i-1\text{ letters}}q^{-1}%
\ \ \ \ \ \ \ \ \ \ \left(  \text{since }s^{-1}=s\text{ and }t^{-1}=t\right)
.
\end{align*}
Also,%
\[
\left(  a_{1}a_{2}\cdots a_{p+i-1}\right)  a_{p+i}=a_{1}a_{2}\cdots
a_{p+i}=\underbrace{\left(  a_{1}a_{2}\cdots a_{p}\right)  }_{=q}%
\underbrace{\left(  a_{p+1}a_{p+2}\cdots a_{p+i}\right)  }%
_{\substack{=\underbrace{stst\cdots}_{i\text{ letters}}\\\text{(by
(\ref{pf.prop.Invsles.b.2}))}}}=q\underbrace{stst\cdots}_{i\text{ letters}}.
\]
Now, (\ref{pf.prop.Invsles.b.alpha}) (applied to $p+i$ instead of $i$) yields%
\begin{align*}
\alpha_{p+i}  &  =\underbrace{\left(  a_{1}a_{2}\cdots a_{p+i-1}\right)
a_{p+i}}_{=q\underbrace{stst\cdots}_{i\text{ letters}}}\underbrace{\left(
a_{1}a_{2}\cdots a_{p+i-1}\right)  ^{-1}}_{=\underbrace{\cdots tsts}%
_{i-1\text{ letters}}q^{-1}}=q\underbrace{\underbrace{stst\cdots}_{i\text{
letters}}\underbrace{\cdots tsts}_{i-1\text{ letters}}}%
_{=\underbrace{stst\cdots s}_{2i-1\text{ letters}}=\left(  st\right)  ^{i-1}%
s}q^{-1}\\
&  =q\left(  st\right)  ^{i-1}sq^{-1}.
\end{align*}
This completes the proof of $\left(  \alpha_{p+1},\alpha_{p+2},\ldots
,\alpha_{p+m}\right)  =q\rho_{s,t}q^{-1}$. Hence, Claim 2 is proven.

\textit{Proof of Claim 3:} In our proof of Claim 2, we have shown that
$\left(  \alpha_{p+1},\alpha_{p+2},\ldots,\alpha_{p+m}\right)  =q\rho
_{s,t}q^{-1}$. The same argument (applied to $\overrightarrow{b}$, $\left(
b_{1},b_{2},\ldots,b_{k}\right)  $, $\left(  \beta_{1},\beta_{2},\ldots
,\beta_{k}\right)  $, $t$ and $s$ instead of $\overrightarrow{a}$, $\left(
a_{1},a_{2},\ldots,a_{k}\right)  $, $\left(  \alpha_{1},\alpha_{2}%
,\ldots,\alpha_{k}\right)  $, $s$ and $t$) shows that $\left(  \beta
_{p+1},\beta_{p+2},\ldots,\beta_{p+m}\right)  =q\rho_{t,s}q^{-1}$ (where we
now use (\ref{pf.prop.Invsles.b.3}) instead of (\ref{pf.prop.Invsles.b.2}),
and use $q=b_{1}b_{2}\cdots b_{p}$ instead of $q=a_{1}a_{2}\cdots a_{p}$).

Now, recall that the word $q\rho_{t,s}q^{-1}$ is the reversal of the word
$q\rho_{s,t}q^{-1}$. Since \newline$\left(  \alpha_{p+1},\alpha_{p+2}%
,\ldots,\alpha_{p+m}\right)  =q\rho_{s,t}q^{-1}$ and $\left(  \beta
_{p+1},\beta_{p+2},\ldots,\beta_{p+m}\right)  =q\rho_{t,s}q^{-1}$, this means
that the word $\left(  \beta_{p+1},\beta_{p+2},\ldots,\beta_{p+m}\right)  $ is
the reversal of $\left(  \alpha_{p+1},\alpha_{p+2},\ldots,\alpha_{p+m}\right)
$. This proves Claim 3.

\textit{Proof of Claim 4:} Since $m=m_{s,t}$, we have $\underbrace{stst\cdots
}_{m\text{ letters}}=\underbrace{tsts\cdots}_{m\text{ letters}}$ (this is one
of the braid relations of our Coxeter group). Let us set
$x=\underbrace{stst\cdots}_{m\text{ letters}}=\underbrace{tsts\cdots}_{m\text{
letters}}$. Now, (\ref{pf.prop.Invsles.b.2}) yields $a_{p+1}a_{p+2}\cdots
a_{p+m}=\underbrace{stst\cdots}_{m\text{ letters}}=x$. Similarly, from
(\ref{pf.prop.Invsles.b.3}), we obtain $b_{p+1}b_{p+2}\cdots b_{p+m}=x$.

Let $i\in\left\{  p+m+1,p+m+2,\ldots,k\right\}  $. Thus,%
\begin{align*}
a_{1}a_{2}\cdots a_{i-1}  &  =\underbrace{\left(  a_{1}a_{2}\cdots
a_{p}\right)  }_{=q}\underbrace{\left(  a_{p+1}a_{p+2}\cdots a_{p+m}\right)
}_{=x}\underbrace{\left(  a_{p+m+1}a_{p+m+2}\cdots a_{i-1}\right)
}_{\substack{=b_{p+m+1}b_{p+m+2}\cdots b_{i-1}\\\text{(by
(\ref{pf.prop.Invsles.b.4}))}}}\\
&  =qx\left(  b_{p+m+1}b_{p+m+2}\cdots b_{i-1}\right)  .
\end{align*}
Comparing this with
\begin{align*}
b_{1}b_{2}\cdots b_{i-1}  &  =\underbrace{\left(  b_{1}b_{2}\cdots
b_{p}\right)  }_{=q}\underbrace{\left(  b_{p+1}b_{p+2}\cdots b_{p+m}\right)
}_{=x}\left(  b_{p+m+1}b_{p+m+2}\cdots b_{i-1}\right) \\
&  =qx\left(  b_{p+m+1}b_{p+m+2}\cdots b_{i-1}\right)  ,
\end{align*}
we obtain $a_{1}a_{2}\cdots a_{i-1}=b_{1}b_{2}\cdots b_{i-1}$. Also,
$a_{i}=b_{i}$ (by (\ref{pf.prop.Invsles.b.4})). Now,
(\ref{pf.prop.Invsles.b.alpha}) becomes%
\begin{align*}
\alpha_{i}  &  =\left(  \underbrace{a_{1}a_{2}\cdots a_{i-1}}_{=b_{1}%
b_{2}\cdots b_{i-1}}\right)  \underbrace{a_{i}}_{=b_{i}}\left(
\underbrace{a_{1}a_{2}\cdots a_{i-1}}_{=b_{1}b_{2}\cdots b_{i-1}}\right)
^{-1}=\left(  b_{1}b_{2}\cdots b_{i-1}\right)  b_{i}\left(  b_{1}b_{2}\cdots
b_{i-1}\right)  ^{-1}\\
&  =\beta_{i}\ \ \ \ \ \ \ \ \ \ \left(  \text{by
(\ref{pf.prop.Invsles.b.beta})}\right)  .
\end{align*}
This proves Claim 4.

Hence, all four claims are proven, and the proof of Proposition
\ref{prop.Invsles} \textbf{(b)} is complete.
\end{proof}

The following fact is rather easy (but will be proven in detail in the next section):

\begin{proposition}
\label{prop.has}Let $w\in W$. Let $s$ and $t$ be two distinct elements of $T$
such that $m_{s,t}<\infty$. Let $\overrightarrow{a}$ be a reduced expression
for $w$. Then:

\begin{enumerate}
\item[\textbf{(a)}] The word $\rho_{s,t}$ appears as a subword of
$\operatorname*{Invs}\overrightarrow{a}$ at most one time.

\item[\textbf{(b)}] The words $\rho_{s,t}$ and $\rho_{t,s}$ cannot both appear
as subwords of $\operatorname*{Invs}\overrightarrow{a}$.
\end{enumerate}
\end{proposition}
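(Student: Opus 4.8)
The plan is to exploit two distinctness facts that are already available: the entries of $\operatorname*{Invs}\overrightarrow{a}$ are pairwise distinct (Proposition \ref{prop.Invsles} \textbf{(a)}), and the entries of $\rho_{s,t}$ are pairwise distinct reflections in $D_{s,t}$ (Proposition \ref{prop.rhost} \textbf{(a)}). Write $\operatorname*{Invs}\overrightarrow{a}=\left(t_{1},t_{2},\ldots,t_{k}\right)$ and set $m=m_{s,t}$; note that $m\geq2$, since $s\neq t$ forces $st\neq1$. Denote the entries of $\rho_{s,t}$ by $u_{1},u_{2},\ldots,u_{m}$ (so that $u_{r}=\left(st\right)^{r-1}s$), these being $m$ distinct reflections in $D_{s,t}$.

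First I would record the following elementary principle. Because the $t_{i}$ are distinct, each reflection $u$ occurs as $t_{i}$ for at most one index $i$; call this index $\pi\left(u\right)$ when it exists. A realization of $\rho_{s,t}$ as a subword of $\operatorname*{Invs}\overrightarrow{a}$ is, by definition, a choice of indices $i_{1}<i_{2}<\cdots<i_{m}$ with $t_{i_{r}}=u_{r}$ for all $r$; distinctness then forces $i_{r}=\pi\left(u_{r}\right)$, so the index tuple is completely determined by the values it must hit. This immediately yields part \textbf{(a)}: there can be at most one such subword, since its positions are pinned down in advance.

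For part \textbf{(b)}, I would invoke Proposition \ref{prop.rhost} \textbf{(b)}, which tells us that $\rho_{t,s}$ is the reversal $\left(u_{m},u_{m-1},\ldots,u_{1}\right)$ of $\rho_{s,t}$. By the same principle, $\rho_{s,t}$ appears as a subword exactly when all $u_{r}$ occur and $\pi\left(u_{1}\right)<\pi\left(u_{2}\right)<\cdots<\pi\left(u_{m}\right)$, whereas $\rho_{t,s}$ appears exactly when all $u_{r}$ occur and $\pi\left(u_{m}\right)<\pi\left(u_{m-1}\right)<\cdots<\pi\left(u_{1}\right)$. Since $m\geq2$, these two chains of inequalities are incompatible: together they would force $\pi\left(u_{1}\right)<\pi\left(u_{m}\right)$ and $\pi\left(u_{m}\right)<\pi\left(u_{1}\right)$ simultaneously. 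Hence $\rho_{s,t}$ and $\rho_{t,s}$ cannot both appear as subwords.

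I do not anticipate a genuine obstacle here. The only thing requiring care is the bookkeeping that converts the phrase \textquotedblleft appears as a subword\textquotedblright\ into a statement about the monotone behaviour of the position function $\pi$, together with citing the two distinctness inputs at the right places. The entire conceptual content is carried by the observation that reversing a sequence of \emph{distinct} entries reverses the order of their (thereby forced) positions.
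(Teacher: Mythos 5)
Your proof is correct and takes essentially the same route as the paper's: part \textbf{(a)} follows from the distinctness of the entries of $\operatorname*{Invs}\overrightarrow{a}$ (Proposition \ref{prop.Invsles} \textbf{(a)}), and part \textbf{(b)} combines this with Proposition \ref{prop.rhost} \textbf{(b)} and $m_{s,t}\geq 2$ to derive incompatible orderings of the forced positions. The paper states the part \textbf{(b)} contradiction as ``some letter of $\rho_{s,t}$ would have to appear twice in $\operatorname*{Invs}\overrightarrow{a}$,'' which is just the contrapositive packaging of your position-function argument.
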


\begin{proof}
[Proof of Proposition \ref{prop.has}.]\textbf{(a)} This follows from the fact
that the word $\rho_{s,t}$ has length $m_{s,t}\geq2>0$, and from Proposition
\ref{prop.Invsles} \textbf{(a)}. \medskip

\textbf{(b)} Assume the contrary. Then, both words $\rho_{s,t}$ and
$\rho_{t,s}$ appear as a subword of $\operatorname*{Invs}\overrightarrow{a}$.
By Proposition \ref{prop.rhost} \textbf{(b)}, this means that both the word
$\rho_{s,t}$ and its reversal appear as a subword of $\operatorname*{Invs}%
\overrightarrow{a}$. Since the word $\rho_{s,t}$ has length $m_{s,t}\geq2$,
this means that at least one letter of $\rho_{s,t}$ appears twice in
$\operatorname*{Invs}\overrightarrow{a}$. This contradicts Proposition
\ref{prop.Invsles} \textbf{(a)}. This contradiction concludes our proof.
\end{proof}

\section{The set $\mathfrak{N}$ and subwords of inversion words}

We now let $\mathfrak{N}$ denote the subset $\bigcup\limits_{x\in
W}x\mathfrak{M}x^{-1}$ of $T\times T$. Clearly, $\mathfrak{M}\subseteq
\mathfrak{N}$. Moreover, for every $\left(  s,t\right)  \in\mathfrak{N}$, we
have $s\neq t$ and $m_{s,t}<\infty$ (because $\left(  s,t\right)
\in\mathfrak{N}=\bigcup\limits_{x\in W}x\mathfrak{M}x^{-1}$, and because these
properties are preserved by conjugation). Thus, for every $\left(  s,t\right)
\in\mathfrak{N}$, the word $\rho_{s,t}$ is well-defined and has exactly
$m_{s,t}$ entries.

We define a binary relation $\approx$ on $\mathfrak{N}$ by%
\[
\left(  \left(  s,t\right)  \approx\left(  s^{\prime},t^{\prime}\right)
\ \Longleftrightarrow\ \text{there exists a }q\in W\text{ such that }%
qsq^{-1}=s^{\prime}\text{ and }qtq^{-1}=t^{\prime}\right)  .
\]
It is clear that this relation $\approx$ is an equivalence relation; it thus
gives rise to a quotient set $\mathfrak{N}/\approx$. For every pair
$P\in\mathfrak{N}$, we denote by $\left[  \left[  P\right]  \right]  $ the
equivalence class of $P$ with respect to this relation $\approx$.

The relation $\sim$ on $\mathfrak{M}$ is the restriction of the relation
$\approx$ to $\mathfrak{M}$. Hence, every equivalence class $c$ with respect
to $\sim$ is a subset of an equivalence class with respect to $\approx$. We
denote the latter equivalence class by $c_{\mathfrak{N}}$. Thus, $\left[
P\right]  _{\mathfrak{N}}=\left[  \left[  P\right]  \right]  $ for every
$P\in\mathfrak{M}$.

We notice that the set $\mathfrak{N}$ is invariant under switching the two
elements of a pair (i.e., for every $\left(  u,v\right)  \in\mathfrak{N}$, we
have $\left(  v,u\right)  \in\mathfrak{N}$). Moreover, the relation $\approx$
is preserved under switching the two elements of a pair (i.e., if $\left(
s,t\right)  \approx\left(  s^{\prime},t^{\prime}\right)  $, then $\left(
t,s\right)  \approx\left(  t^{\prime},s^{\prime}\right)  $). This shall be
tacitly used in the following proofs.

\begin{definition}
\label{def.has}Let $w\in W$. Let $\overrightarrow{a}$ be a reduced expression
for $w$.

\begin{enumerate}
\item[\textbf{(a)}] For any $\left(  s,t\right)  \in\mathfrak{N}$, we define
an element $\operatorname*{has}\nolimits_{s,t}\overrightarrow{a}\in\left\{
0,1\right\}  $ by%
\[
\operatorname*{has}\nolimits_{s,t}\overrightarrow{a}=%
\begin{cases}
1, & \text{if }\rho_{s,t}\text{ appears as a subword of }\operatorname*{Invs}%
\overrightarrow{a};\\
0, & \text{otherwise}%
\end{cases}
.
\]
(Keep in mind that we are speaking of subwords, not just factors, here.)

\item[\textbf{(b)}] Consider the free $\mathbb{Z}$-module $\mathbb{Z}\left[
\mathfrak{N}\right]  $ with basis $\mathfrak{N}$. We define an element
$\operatorname*{Has}\overrightarrow{a}\in\mathbb{Z}\left[  \mathfrak{N}%
\right]  $ by%
\[
\operatorname*{Has}\overrightarrow{a}=\sum_{\left(  s,t\right)  \in
\mathfrak{N}}\operatorname*{has}\nolimits_{s,t}\overrightarrow{a}\cdot\left(
s,t\right)
\]
(where the $\left(  s,t\right)  $ stands for the basis element $\left(
s,t\right)  \in\mathfrak{N}$ of $\mathbb{Z}\left[  \mathfrak{N}\right]  $).
This is well-defined, since only finitely many pairs $\left(  s,t\right)
\in\mathfrak{N}$ satisfy $\operatorname*{has}\nolimits_{s,t}\overrightarrow{a}%
\neq0$ (since $\operatorname*{Invs}\overrightarrow{a}$ has only finitely many
subwords, and all the $\rho_{s,t}$ are distinct).
\end{enumerate}
\end{definition}

We can now state the main result that we will use to prove Theorem
\ref{thm.BCL}:

\begin{theorem}
\label{thm.has}Let $w\in W$. Let $\left(  s,t\right)  \in\mathfrak{M}$. Let
$\overrightarrow{a}$ and $\overrightarrow{b}$ be two reduced expressions for
$w$ such that $\overrightarrow{b}$ is obtained from $\overrightarrow{a}$ by an
$\left(  s,t\right)  $-braid move.

Proposition \ref{prop.Invsles} \textbf{(b)} shows that there exists a $q\in W$
such that $\operatorname*{Invs}\overrightarrow{b}$ is obtained from
$\operatorname*{Invs}\overrightarrow{a}$ by replacing a particular factor of
the form $q\rho_{s,t}q^{-1}$ by its reversal. Consider this $q$. Set
$s^{\prime}=qsq^{-1}$ and $t^{\prime}=qtq^{-1}$; thus, $s^{\prime}$ and
$t^{\prime}$ are reflections and satisfy $m_{s^{\prime},t^{\prime}}%
=m_{s,t}<\infty$. Also, the definitions of $s^{\prime}$ and $t^{\prime}$ yield
$\left(  s^{\prime},t^{\prime}\right)  =q\underbrace{\left(  s,t\right)
}_{\in\mathfrak{M}}q^{-1}\in q\mathfrak{M}q^{-1}\subseteq\mathfrak{N}$.
Similarly, $\left(  t^{\prime},s^{\prime}\right)  \in\mathfrak{N}$ (since
$\left(  t,s\right)  \in\mathfrak{M}$).

Now, we have%
\begin{equation}
\operatorname*{Has}\overrightarrow{b}=\operatorname*{Has}\overrightarrow{a}%
-\left(  s^{\prime},t^{\prime}\right)  +\left(  t^{\prime},s^{\prime}\right)
. \label{eq.thm.has.a}%
\end{equation}

\end{theorem}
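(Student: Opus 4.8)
The plan is to prove the identity (\ref{eq.thm.has.a}) coefficientwise in the basis $\mathfrak{N}$ of $\mathbb{Z}\left[\mathfrak{N}\right]$. By Definition \ref{def.has}, this amounts to showing, for every $\left(u,v\right)\in\mathfrak{N}$, that
\[
\operatorname*{has}\nolimits_{u,v}\overrightarrow{b}-\operatorname*{has}\nolimits_{u,v}\overrightarrow{a}=\begin{cases}-1,&\text{if }\left(u,v\right)=\left(s^{\prime},t^{\prime}\right);\\+1,&\text{if }\left(u,v\right)=\left(t^{\prime},s^{\prime}\right);\\0,&\text{otherwise.}\end{cases}
\]
First I would record the shape of the passage from $\operatorname*{Invs}\overrightarrow{a}$ to $\operatorname*{Invs}\overrightarrow{b}$. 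Since $q\rho_{s,t}q^{-1}=\rho_{s^{\prime},t^{\prime}}$ and, by Proposition \ref{prop.rhost} \textbf{(c)}, $q\rho_{t,s}q^{-1}=\rho_{t^{\prime},s^{\prime}}$ is its reversal, the hypothesis says that $\operatorname*{Invs}\overrightarrow{b}$ is obtained from $\operatorname*{Invs}\overrightarrow{a}$ by reversing one contiguous factor occupying a block of positions $B=\left\{p+1,\ldots,p+m\right\}$ (with $m=m_{s,t}$), whose entries are $\rho_{s^{\prime},t^{\prime}}$ in $\operatorname*{Invs}\overrightarrow{a}$ and $\rho_{t^{\prime},s^{\prime}}$ in $\operatorname*{Invs}\overrightarrow{b}$. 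In particular $\operatorname*{Invs}\overrightarrow{a}$ and $\operatorname*{Invs}\overrightarrow{b}$ have the same set of entries and agree outside $B$. By Proposition \ref{prop.Invsles} \textbf{(a)} all entries are distinct, and by Proposition \ref{prop.rhost} \textbf{(a)} the block lists every reflection of $D_{s^{\prime},t^{\prime}}$ exactly once; hence the reflections of $D_{s^{\prime},t^{\prime}}$ occur in $\operatorname*{Invs}\overrightarrow{a}$ (and in $\operatorname*{Invs}\overrightarrow{b}$) precisely at the positions in $B$.

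For $\left(u,v\right)=\left(s^{\prime},t^{\prime}\right)$ I would argue directly: in $\operatorname*{Invs}\overrightarrow{a}$ the reflections of $D_{s^{\prime},t^{\prime}}$ sit in $B$ in the order $\rho_{s^{\prime},t^{\prime}}$, so $\rho_{s^{\prime},t^{\prime}}$ is a subword and $\operatorname*{has}\nolimits_{s^{\prime},t^{\prime}}\overrightarrow{a}=1$; in $\operatorname*{Invs}\overrightarrow{b}$ these same reflections occur only in $B$, now in the reversed order $\rho_{t^{\prime},s^{\prime}}$, and since $m\geq2$ this is not the order $\rho_{s^{\prime},t^{\prime}}$, so $\operatorname*{has}\nolimits_{s^{\prime},t^{\prime}}\overrightarrow{b}=0$. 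This yields the value $-1$. The case $\left(u,v\right)=\left(t^{\prime},s^{\prime}\right)$ is symmetric and yields $+1$.

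The substance is the case $\left(u,v\right)\in\mathfrak{N}\setminus\left\{\left(s^{\prime},t^{\prime}\right),\left(t^{\prime},s^{\prime}\right)\right\}$, where I must show $\operatorname*{has}\nolimits_{u,v}$ is unchanged. Write $R$ and $R^{\prime}$ for the reflection sets of $D_{u,v}$ and $D_{s^{\prime},t^{\prime}}$; since every entry of $\rho_{u,v}$ lies in $R$, the entries of $\rho_{u,v}$ falling in the block are exactly those in $R\cap R^{\prime}$. If $\left\vert R\cap R^{\prime}\right\vert\leq1$, then reversing $B$ does not alter the relative order in which the reflections of $R$ appear (those in $R\setminus R^{\prime}$ keep their positions, and the at most one reflection of $R\cap R^{\prime}$ merely moves within $B$), so $\rho_{u,v}$ is a subword of $\operatorname*{Invs}\overrightarrow{a}$ iff it is a subword of $\operatorname*{Invs}\overrightarrow{b}$. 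If $\left\vert R\cap R^{\prime}\right\vert\geq2$, I claim $\rho_{u,v}$ is a subword of neither. Indeed, suppose it were a subword of, say, $\operatorname*{Invs}\overrightarrow{a}$, via an increasing embedding $\phi$. Because the reflections of $R^{\prime}$ occupy exactly the interval $B$, the set of indices $j$ with $\left(\rho_{u,v}\right)_{j}\in R\cap R^{\prime}$ equals $\phi^{-1}\left(B\right)$, which is an interval; so the reflections of $R\cap R^{\prime}$ form a contiguous factor of $\rho_{u,v}$ of length $\geq2$. Two consecutive entries $\left(uv\right)^{j}u$ and $\left(uv\right)^{j+1}u$ of $\rho_{u,v}$ then lie in $D_{s^{\prime},t^{\prime}}$; their product is $\left(uv\right)^{-1}$, which generates the rotation subgroup of $D_{u,v}$, so $D_{u,v}=\left\langle\left(uv\right)^{j}u,\left(uv\right)^{j+1}u\right\rangle\subseteq D_{s^{\prime},t^{\prime}}$.

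At this point $D_{u,v}$ and $D_{s^{\prime},t^{\prime}}$ are nested and share at least two reflections, and the key lemma I would invoke is that this forces $D_{u,v}=D_{s^{\prime},t^{\prime}}$. Granting it, $R=R^{\prime}$, the whole of $\rho_{u,v}$ lives in $B$, and $\phi$ identifies $\rho_{u,v}$ with the block $\rho_{s^{\prime},t^{\prime}}$; comparing first and second entries gives $\left(u,v\right)=\left(s^{\prime},t^{\prime}\right)$, contradicting the case assumption (the $\operatorname*{Invs}\overrightarrow{b}$ alternative yields $\left(u,v\right)=\left(t^{\prime},s^{\prime}\right)$ likewise). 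Hence $\operatorname*{has}\nolimits_{u,v}\overrightarrow{a}=\operatorname*{has}\nolimits_{u,v}\overrightarrow{b}=0$, completing the coefficientwise check. I expect this key lemma to be the main obstacle: it asserts that two members of the family $\left\{D_{x,y}:\left(x,y\right)\in\mathfrak{N}\right\}$ — that is, two $W$-conjugates of rank-$2$ standard parabolic subgroups — cannot be properly nested. The conceptual reason is that each $D_{x,y}$ is a parabolic subgroup, a parabolic subgroup is determined by the reflections it contains, and two parabolic subgroups of equal rank with one inside the other must coincide (a rank-$2$ parabolic of the rank-$2$ Coxeter group $D_{s^{\prime},t^{\prime}}$ is all of it). In keeping with the geometry-free spirit of the paper, I would establish this fact combinatorially rather than through root systems; isolating and proving it cleanly is the delicate part, whereas everything above is bookkeeping resting on Propositions \ref{prop.rhost} and \ref{prop.Invsles}.
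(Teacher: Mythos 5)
Your bookkeeping is sound, and it in fact parallels the paper's own proof quite closely: the coefficientwise comparison of $\operatorname{Has}$ corresponds to the paper's Claims 1--3; your observation that the reflections of $D_{s^{\prime},t^{\prime}}$ occupy exactly the block $B$ is the paper's use of Proposition \ref{prop.rhost} \textbf{(a)} together with Proposition \ref{prop.Invsles} \textbf{(a)}; and your computation that two consecutive entries of $\rho_{u,v}$ lying in $D_{s^{\prime},t^{\prime}}$ force $u,v\in D_{s^{\prime},t^{\prime}}$ (via the product $\left(  \left(  uv\right)  ^{j}u\right)  \left(  \left(  uv\right)  ^{j+1}u\right)  =\left(  uv\right)  ^{-1}$) is precisely what the paper isolates as Lemma \ref{lem.GandH}.

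However, there is a genuine gap at exactly the point you flag: your \emph{key lemma} --- that $D_{u,v}\subseteq D_{s^{\prime},t^{\prime}}$ with $\left(  u,v\right)  \in\mathfrak{N}$ forces $D_{u,v}=D_{s^{\prime},t^{\prime}}$ --- is left unproven, and it is not peripheral; it is the mathematical heart of the theorem. It is equivalent to the paper's Lemma \ref{lem.dihindih} (which asserts $m_{u,v}=m_{s,t}$; equality of the two dihedral groups then follows by comparing the cardinalities $2m_{u,v}$ and $2m_{s,t}$). The paper's proof of that lemma is genuinely nontrivial: it takes the minimal-length element $w$ of the coset $W_{I}q^{-1}$ for $I=\left\{  s,t\right\}  $ and uses \cite[Lemma 9.7]{Lusztig-Hecke} to show that $l\left(  u^{\prime}w\right)  =l\left(  u^{\prime}\right)  +l\left(  w\right)  $ for $u^{\prime}=wuw^{-1}\in W_{I}$, forcing $l\left(  u^{\prime}\right)  =1$ and hence $u^{\prime}\in I$ (and similarly for $v$), so that the pair $\left(  u,v\right)  $ is conjugate to $\left(  s,t\right)  $ or $\left(  t,s\right)  $. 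Your sketched justification --- that each $D_{x,y}$ is a parabolic subgroup, that parabolic subgroups are determined by the reflections they contain, and that equal-rank nested parabolics coincide --- appeals to structural facts about conjugates of standard parabolic subgroups (well-definedness of rank, rigidity under nesting) which are themselves theorems whose standard proofs use either root-system geometry or exactly the kind of coset-representative argument above. So the step you defer as ``the delicate part'' is precisely the part that must be proven for the argument to stand; as written, the proposal is incomplete there, while everything surrounding it is correct.
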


Before we prove Theorem \ref{thm.has}, we first show two lemmas. The first one
is a crucial property of dihedral subgroups in our Coxeter group:

\begin{lemma}
\label{lem.dihindih}Let $\left(  s,t\right)  \in\mathfrak{M}$ and $\left(
u,v\right)  \in\mathfrak{N}$. Let $q\in W$. Assume that $u\in qD_{s,t}q^{-1}$
and $v\in qD_{s,t}q^{-1}$. Then, $m_{s,t}=m_{u,v}$.
\end{lemma}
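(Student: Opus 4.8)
The plan is to prove $m_{s,t} = m_{u,v}$ by comparing the two dihedral groups $D_{s,t}$ and $D_{u,v}$ and showing that the latter sits inside a conjugate of the former. Let me sketch how I would argue.

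First I would observe that $qD_{s,t}q^{-1}$ is itself a dihedral group, being the conjugate of the dihedral group $D_{s,t}$; moreover it has exactly $2m_{s,t}$ elements, of which exactly $m_{s,t}$ are reflections (this is essentially the content established in the proof of Proposition~\ref{prop.rhost}~\textbf{(a)}, Claim~3). Since $u$ and $v$ both lie in $qD_{s,t}q^{-1}$ by hypothesis, and since $u$ and $v$ are reflections (because $(u,v) \in \mathfrak{N}$ guarantees $u,v \in T$), the group $D_{u,v}$ generated by $u$ and $v$ is a subgroup of $qD_{s,t}q^{-1}$. Thus $D_{u,v} \subseteq qD_{s,t}q^{-1}$, and consequently $\left|D_{u,v}\right|$ divides $2m_{s,t}$, giving $m_{u,v} \leq m_{s,t}$.

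The harder direction is the reverse inequality, $m_{s,t} \leq m_{u,v}$, and this is where I expect the main obstacle to lie: a priori $D_{u,v}$ could be a proper dihedral subgroup of $qD_{s,t}q^{-1}$. The key structural fact I would exploit is that in a finite dihedral group, the set of reflections is precisely the set of elements of order $2$ outside the cyclic rotation subgroup, and a proper dihedral subgroup contains strictly fewer than all of the reflections of the ambient group. More precisely, I would argue that every reflection of $qD_{s,t}q^{-1}$ lies in $D_{u,v}$: indeed, the $m_{s,t}$ reflections of $qD_{s,t}q^{-1}$ are exactly the conjugates $q\left((st)^k s\right)q^{-1}$ for $k \in \{0,1,\ldots,m_{s,t}-1\}$ (using Proposition~\ref{prop.rhost}~\textbf{(a)} and conjugating), and each such element, being a reflection in the dihedral group generated by the two reflections $u$ and $v$, must be recoverable from $u,v$. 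The cleanest route is to note that since $u,v$ generate a dihedral subgroup of $qD_{s,t}q^{-1}$ and both are reflections, their product $uv$ is a rotation whose order is $m_{u,v}$; this rotation lies in the cyclic rotation subgroup of $qD_{s,t}q^{-1}$, which has order $m_{s,t}$, so $m_{u,v} \mid m_{s,t}$. To upgrade this to equality I would show the rotation $uv$ in fact generates the whole rotation subgroup, equivalently that $D_{u,v}$ cannot be a proper subgroup.

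To close this gap cleanly, I would instead prove equality of the two dihedral groups outright: I claim $qD_{s,t}q^{-1} = D_{u,v}$. One economical way is a counting argument combined with the reflection characterization. Since $D_{u,v} \subseteq qD_{s,t}q^{-1}$ with both dihedral, it suffices to show they have the same number of reflections. Every reflection of $D_{u,v}$ is a reflection of $qD_{s,t}q^{-1}$; conversely, I would argue that the reflections of $qD_{s,t}q^{-1}$ all lie in the subgroup generated by its own reflections, which of course equals $qD_{s,t}q^{-1}$, so if $D_{u,v}$ omitted even one reflection of the ambient group it would be a proper subgroup of smaller reflection count, contradicting the fact that a dihedral group of order $2m$ has no proper dihedral subgroup with the same rotation subgroup. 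Combining $m_{u,v} \mid m_{s,t}$ with the observation that $D_{u,v}$ must contain enough reflections to force $m_{u,v} = m_{s,t}$ then yields the claim. The crux throughout is the elementary but slightly fiddly structure theory of finite dihedral groups, so I would keep the group-order bookkeeping explicit and lean on Proposition~\ref{prop.rhost}~\textbf{(a)} to count reflections.
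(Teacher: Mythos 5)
There is a genuine gap, and it is fatal to the whole approach: your argument never uses the hypothesis $\left(u,v\right)\in\mathfrak{N}$ in any essential way (you only extract from it that $u$ and $v$ are distinct reflections with $m_{u,v}<\infty$), but the statement is false under those weaker assumptions. Concretely, let $\left(W,S\right)$ be the dihedral Coxeter group with $S=\left\{ s,t\right\}$ and $m_{s,t}=4$ (type $I_{2}\left(4\right)=B_{2}$), and take $q=1$, $u=s$, $v=tst$. Then $u$ and $v$ are distinct reflections lying in $D_{s,t}$, but $uv=\left(st\right)^{2}$ has order $2$, so $\left\langle u,v\right\rangle$ is a \emph{proper} dihedral subgroup of $D_{s,t}$ (order $4$ inside order $8$), and the order of $uv$ is $2\neq4=m_{s,t}$. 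This example destroys the key steps of your plan: it is not true that every reflection of $qD_{s,t}q^{-1}$ lies in $D_{u,v}$; and a dihedral subgroup generated by two reflections of the ambient dihedral group need not have the same rotation subgroup as the ambient group (here $\left\langle uv\right\rangle$ has index $2$ in $\left\langle st\right\rangle$), so the contradiction you invoke in your final paragraph never materializes. What saves the lemma is precisely membership in $\mathfrak{N}$: conjugation preserves the order of a product, so a pair in $\mathfrak{N}$ has product whose order is an entry of the Coxeter matrix, and the pair $\left(s,tst\right)$ above is not in $\mathfrak{N}$. No amount of dihedral-group bookkeeping can distinguish $\left(s,t\right)$ from $\left(s,tst\right)$, so the reverse inequality cannot be obtained this way.

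The parts of your proposal that are correct --- namely $D_{u,v}\subseteq qD_{s,t}q^{-1}$, hence $m_{u,v}\mid m_{s,t}$ by Lagrange (or since $uv$ lies in the rotation subgroup) --- give only one direction. The paper gets the other direction from genuinely Coxeter-theoretic input. It first reduces to the case $\left(u,v\right)\in\mathfrak{M}$, i.e.\ $u,v\in S$, by conjugating everything (harmless, since $\mathfrak{N}=\bigcup_{x\in W}x\mathfrak{M}x^{-1}$ and conjugation preserves all the hypotheses). Then, writing $I=\left\{ s,t\right\}$ and $W_{I}=D_{s,t}$, it takes the minimal-length element $w$ of the coset $W_{I}q^{-1}$ and uses the length identity $l\left(u^{\prime}w\right)=l\left(u^{\prime}\right)+l\left(w\right)$ for $u^{\prime}\in W_{I}$ (\cite[Lemma 9.7]{Lusztig-Hecke}) together with $u^{\prime}w=wu$ and $l\left(wu\right)=l\left(w\right)\pm1$ (as $u\in S$) to conclude that $u^{\prime}=wuw^{-1}$ has length $1$, hence $u^{\prime}\in S\cap W_{I}=I$, and likewise $v^{\prime}=wvw^{-1}\in I$. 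Since $u^{\prime}\neq v^{\prime}$, this forces $\left\{ u^{\prime},v^{\prime}\right\} =\left\{ s,t\right\}$, whence $m_{u,v}=m_{u^{\prime},v^{\prime}}=m_{s,t}$. Any repair of your proof must import the simultaneous-conjugacy hypothesis through some such mechanism (parabolic subgroups and minimal coset representatives, or an equivalent); it cannot stay inside the finite dihedral group $qD_{s,t}q^{-1}$.
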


\begin{proof}
[Proof of Lemma \ref{lem.dihindih}.]\textit{Claim 1:} Lemma \ref{lem.dihindih}
holds in the case when $\left(  u,v\right)  \in\mathfrak{M}$.

\textit{Proof.} Assume that $\left(  u,v\right)  \in\mathfrak{M}$. Thus,
$u,v\in S$. Let $I$ be the subset $\left\{  s,t\right\}  $ of $S$. We shall
use the notations of \cite[\S 9]{Lusztig-Hecke}. In particular, $l\left(
r\right)  $ denotes the length of any element $r\in W$.

We have $W_{I}=D_{s,t}$. Consider the coset $W_{I}q^{-1}$ of $W_{I}$. From
\cite[Lemma 9.7 (a)]{Lusztig-Hecke} (applied to $a=q^{-1}$), we know that this
coset $W_{I}q^{-1}$ has a unique element of minimal length. Let $w$ be this
element. Thus, $w\in W_{I}q^{-1}$, so that $W_{I}w=W_{I}q^{-1}$. Now,
\[
\underbrace{q}_{=\left(  q^{-1}\right)  ^{-1}}\underbrace{W_{I}}_{=\left(
W_{I}\right)  ^{-1}}=\left(  q^{-1}\right)  ^{-1}\left(  W_{I}\right)
^{-1}=\left(  \underbrace{W_{I}q^{-1}}_{=W_{I}w}\right)  ^{-1}=\left(
W_{I}w\right)  ^{-1}=w^{-1}W_{I}.
\]

Let $u^{\prime}=wuw^{-1}$ and $v^{\prime}=wvw^{-1}$.

We have $u\in q\underbrace{D_{s,t}}_{=W_{I}}q^{-1}=q\underbrace{W_{I}q^{-1}%
}_{=W_{I}w}=\underbrace{qW_{I}}_{=w^{-1}W_{I}}w=w^{-1}W_{I}w$. In other words,
$wuw^{-1}\in W_{I}$. In other words, $u^{\prime}\in W_{I}$ (since $u^{\prime
}=wuw^{-1}$). Similarly, $v^{\prime}\in W_{I}$.

We have $u^{\prime}=wuw^{-1}$, hence $u^{\prime}w=wu$. But \cite[Lemma 9.7
(b)]{Lusztig-Hecke} (applied to $a=q^{-1}$ and $y=u^{\prime}$) shows that
$l\left(  u^{\prime}w\right)  =l\left(  u^{\prime}\right)  +l\left(  w\right)
$. Hence,
\[
l\left(  u^{\prime}\right)  +l\left(  w\right)  =l\left(
\underbrace{u^{\prime}w}_{=wu}\right)  =l\left(  wu\right)  =l\left(
w\right)  \pm1\ \ \ \ \ \ \ \ \ \ \left(  \text{since }u\in S\right)  .
\]
Subtracting $l\left(  w\right)  $ from this equality, we obtain $l\left(
u^{\prime}\right)  =\pm1$, and thus $l\left(  u^{\prime}\right)  =1$, so that
$u^{\prime}\in S$. Combined with $u^{\prime}\in W_{I}$, this shows that
$u^{\prime}\in S\cap W_{I}=I$. Similarly, $v^{\prime}\in I$.

We have $u\neq v$ (since $\left(  u,v\right)  \in\mathfrak{N}$), thus
$wuw^{-1}\neq wvw^{-1}$, thus $u^{\prime}=wuw^{-1}\neq wvw^{-1}=v^{\prime}$.
Thus, $u^{\prime}$ and $v^{\prime}$ are two distinct elements of the
two-element set $I=\left\{  s,t\right\}  $. Hence, either $\left(  u^{\prime
},v^{\prime}\right)  =\left(  s,t\right)  $ or $\left(  u^{\prime},v^{\prime
}\right)  =\left(  t,s\right)  $. In either of these two cases, we have
$m_{u^{\prime},v^{\prime}}=m_{s,t}$. But since $u^{\prime}=wuw^{-1}$ and
$v^{\prime}=wvw^{-1}$, we have $m_{u^{\prime},v^{\prime}}=m_{u,v}$. Hence,
$m_{s,t}=m_{u^{\prime},v^{\prime}}=m_{u,v}$. This proves Claim 1.

\textit{Claim 2:} Lemma \ref{lem.dihindih} holds in the general case.

\textit{Proof.} Consider the general case. We have $\left(  u,v\right)
\in\mathfrak{N}=\bigcup_{x\in W}x\mathfrak{M}x^{-1}$. Thus, there exists some
$x\in W$ such that $\left(  u,v\right)  \in x\mathfrak{M}x^{-1}$. Consider
this $x$. From $\left(  u,v\right)  \in x\mathfrak{M}x^{-1}$, we obtain
$x^{-1}\left(  u,v\right)  x\in\mathfrak{M}$. In other words, $\left(
x^{-1}ux,x^{-1}vx\right)  \in\mathfrak{M}$. Moreover,
\[
x^{-1}\underbrace{u}_{\in qD_{s,t}q^{-1}}x\in x^{-1}qD_{s,t}\underbrace{q^{-1}%
x}_{=\left(  x^{-1}q\right)  ^{-1}}=x^{-1}qD_{s,t}\left(  x^{-1}q\right)
^{-1},
\]
and similarly $x^{-1}vx\in x^{-1}qD_{s,t}\left(  x^{-1}q\right)  ^{-1}$.
Hence, Claim 1 (applied to $\left(  x^{-1}ux,x^{-1}vx\right)  $ and $x^{-1}q$
instead of $\left(  u,v\right)  $ and $q$) shows that $m_{s,t}=m_{x^{-1}%
ux,x^{-1}vx}=m_{u,v}$. This proves Claim 2, and thus proves Lemma
\ref{lem.dihindih}.
\end{proof}

Next comes another lemma, bordering on the trivial:

\begin{lemma}
\label{lem.GandH}Let $G$ be a group. Let $H$ be a subgroup of $G$. Let $u\in
G$, $v\in G$ and $g\in\mathbb{Z}$. Assume that $\left(  uv\right)  ^{g-1}u\in
H$ and $\left(  uv\right)  ^{g}u\in H$. Then, $u\in H$ and $v\in H$.
\end{lemma}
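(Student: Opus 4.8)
The plan is to exploit the fact that the two hypotheses differ by exactly one factor of $uv$, so that $uv$ itself can be recovered as a ``quotient'' of the two given elements of $H$. Concretely, I would first compute
\[
\left(  \left(  uv\right)  ^{g}u\right)  \left(  \left(  uv\right)
^{g-1}u\right)  ^{-1}=\left(  uv\right)  ^{g}u\cdot u^{-1}\left(  uv\right)
^{-\left(  g-1\right)  }=\left(  uv\right)  ^{g}\left(  uv\right)  ^{-\left(
g-1\right)  }=uv.
\]
Since both $\left(  uv\right)  ^{g}u$ and $\left(  uv\right)  ^{g-1}u$ lie in $H$, and $H$ is a subgroup (hence closed under products and inverses), this shows immediately that $uv\in H$.

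Once $uv\in H$ is established, the rest follows because a subgroup is closed under arbitrary integer powers. Thus $\left(  uv\right)  ^{g-1}\in H$, and therefore its inverse $\left(  uv\right)  ^{-\left(  g-1\right)  }\in H$ as well. Multiplying the hypothesis $\left(  uv\right)  ^{g-1}u\in H$ on the left by this inverse, I would obtain
\[
u=\left(  uv\right)  ^{-\left(  g-1\right)  }\left(  \left(  uv\right)
^{g-1}u\right)  \in H,
\]
again using closure of $H$ under products. Finally, $v=u^{-1}\left(  uv\right)  \in H$, since $u^{-1}\in H$ (as $u\in H$) and $uv\in H$. This yields both conclusions $u\in H$ and $v\in H$.

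There is no genuine obstacle here: the lemma is purely formal group theory, and the only thing to be careful about is the bookkeeping of exponents and inverses (in particular that $g$ may be any integer, positive, zero, or negative, which is why I rely on $H$ being closed under all integer powers rather than just nonnegative ones). The crucial idea is simply the telescoping cancellation $\left(  uv\right)  ^{g}\left(  uv\right)  ^{-\left(  g-1\right)  }=uv$ that isolates a single copy of $uv$ from the two hypotheses.
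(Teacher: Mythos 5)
Your proof is correct and follows essentially the same route as the paper's: first isolate $uv\in H$ via the telescoping product $\left(\left(uv\right)^{g}u\right)\left(\left(uv\right)^{g-1}u\right)^{-1}=uv$, then recover $u$ by cancelling a power of $uv$ against one of the hypotheses, and finally obtain $v=u^{-1}\left(uv\right)\in H$. The only cosmetic difference is that you cancel $\left(uv\right)^{-\left(g-1\right)}$ against the hypothesis $\left(uv\right)^{g-1}u\in H$, whereas the paper cancels $\left(uv\right)^{-g}$ against $\left(uv\right)^{g}u\in H$; the argument is otherwise identical.
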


\begin{proof}
[Proof of Lemma \ref{lem.GandH}.]We have $\underbrace{\left(  \left(
uv\right)  ^{g}u\right)  }_{\in H}\left(  \underbrace{\left(  uv\right)
^{g-1}u}_{\in H}\right)  ^{-1}\in HH^{-1}\subseteq H$ (since $H$ is a subgroup
of $G$). Since
\[
\left(  \left(  uv\right)  ^{g}u\right)  \underbrace{\left(  \left(
uv\right)  ^{g-1}u\right)  ^{-1}}_{=u^{-1}\left(  \left(  uv\right)
^{g-1}\right)  ^{-1}}=\left(  uv\right)  ^{g}\underbrace{uu^{-1}}_{=1}\left(
\left(  uv\right)  ^{g-1}\right)  ^{-1}=\left(  uv\right)  ^{g}\left(  \left(
uv\right)  ^{g-1}\right)  ^{-1}=uv,
\]
this rewrites as $uv\in H$. However, $\left(  uv\right)  ^{-g}\left(
uv\right)  ^{g}u=u$, so that
\[
u=\left(  \underbrace{uv}_{\in H}\right)  ^{-g}\underbrace{\left(  uv\right)
^{g}u}_{\in H}\in H^{-g}H\subseteq H
\]
(since $H$ is a subgroup of $G$). Now, both $u$ and $uv$ belong to the
subgroup $H$ of $G$. Thus, so does $u^{-1}\left(  uv\right)  $. In other
words, $u^{-1}\left(  uv\right)  \in H$, so that $v=u^{-1}\left(  uv\right)
\in H$. This completes the proof of Lemma \ref{lem.GandH}.
\end{proof}

\begin{proof}
[Proof of Theorem \ref{thm.has}.]Conjugation by $q$ (that is, the map
$W\rightarrow W,\ x\mapsto qxq^{-1}$) is a group endomorphism of $W$. Hence,
for every $i\in\mathbb{N}$, we have%
\begin{equation}
q\left(  st\right)  ^{i}sq^{-1}=\left(  \underbrace{\left(  qsq^{-1}\right)
}_{=s^{\prime}}\left(  \underbrace{qtq^{-1}}_{=t^{\prime}}\right)  \right)
^{i}\underbrace{\left(  qsq^{-1}\right)  }_{=s^{\prime}}=\left(  s^{\prime
}t^{\prime}\right)  ^{i}s^{\prime}. \label{pf.thm.has.qconj}%
\end{equation}

Let $m=m_{s,t}$. We have%
\[
\rho_{s,t}=\left(  \left(  st\right)  ^{0}s,\left(  st\right)  ^{1}%
s,\ldots,\left(  st\right)  ^{m_{s,t}-1}s\right)  =\left(  \left(  st\right)
^{0}s,\left(  st\right)  ^{1}s,\ldots,\left(  st\right)  ^{m-1}s\right)
\]
(since $m_{s,t}=m$) and thus%
\begin{align*}
q\rho_{s,t}q^{-1}  &  =q\left(  \left(  st\right)  ^{0}s,\left(  st\right)
^{1}s,\ldots,\left(  st\right)  ^{m-1}s\right)  q^{-1}\\
&  =\left(  q\left(  st\right)  ^{0}sq^{-1},q\left(  st\right)  ^{1}%
sq^{-1},\ldots,q\left(  st\right)  ^{m-1}sq^{-1}\right) \\
&  =\left(  \left(  s^{\prime}t^{\prime}\right)  ^{0}s^{\prime},\left(
s^{\prime}t^{\prime}\right)  ^{1}s^{\prime},\ldots,\left(  s^{\prime}%
t^{\prime}\right)  ^{m-1}s^{\prime}\right) \\
&  \ \ \ \ \ \ \ \ \ \ \left(
\begin{array}
[c]{c}%
\text{since every }i\in\left\{  0,1,\ldots,m-1\right\}  \text{ satisfies}\\
q\left(  st\right)  ^{i}sq^{-1}=\left(  s^{\prime}t^{\prime}\right)
^{i}s^{\prime}\text{ (by (\ref{pf.thm.has.qconj}))}%
\end{array}
\right) \\
&  =\left(  \left(  s^{\prime}t^{\prime}\right)  ^{0}s^{\prime},\left(
s^{\prime}t^{\prime}\right)  ^{1}s^{\prime},\ldots,\left(  s^{\prime}%
t^{\prime}\right)  ^{m_{s^{\prime},t^{\prime}}-1}s^{\prime}\right)
\ \ \ \ \ \ \ \ \ \ \left(  \text{since }m=m_{s,t}=m_{s^{\prime},t^{\prime}%
}\right) \\
&  =\rho_{s^{\prime},t^{\prime}}\ \ \ \ \ \ \ \ \ \ \left(  \text{by the
definition of }\rho_{s^{\prime},t^{\prime}}\right)  .
\end{align*}

The word $\overrightarrow{b}$ is obtained from $\overrightarrow{a}$ by an
$\left(  s,t\right)  $-braid move. Hence, the word $\overrightarrow{a}$ can be
obtained from $\overrightarrow{b}$ by a $\left(  t,s\right)  $-braid move.

From $\left(  s^{\prime},t^{\prime}\right)  \in\mathfrak{N}$, we obtain
$s^{\prime}\neq t^{\prime}$. Hence, $\left(  s^{\prime},t^{\prime}\right)
\neq\left(  t^{\prime},s^{\prime}\right)  $.

From $s^{\prime}=qsq^{-1}$ and $t^{\prime}=qtq^{-1}$, we obtain $D_{s^{\prime
},t^{\prime}}=qD_{s,t}q^{-1}$ (since conjugation by $q$ is a group
endomorphism of $W$).

Proposition \ref{prop.rhost} \textbf{(c)} shows that the word $q\rho
_{t,s}q^{-1}$ is the reversal of the word $q\rho_{s,t}q^{-1}$. Hence, the word
$q\rho_{s,t}q^{-1}$ is the reversal of the word $q\rho_{t,s}q^{-1}$.

Recall that $\operatorname*{Invs}\overrightarrow{b}$ is obtained from
$\operatorname*{Invs}\overrightarrow{a}$ by replacing a particular factor of
the form $q\rho_{s,t}q^{-1}$ by its reversal. Since this latter reversal is
$q\rho_{t,s}q^{-1}$ (as we have previously seen), this shows that
$\operatorname*{Invs}\overrightarrow{b}$ has a factor of $q\rho_{t,s}q^{-1}$
in the place where the word $\operatorname*{Invs}\overrightarrow{a}$ had the
factor $q\rho_{s,t}q^{-1}$. Hence, $\operatorname*{Invs}\overrightarrow{a}$
can, in turn, be obtained from $\operatorname*{Invs}\overrightarrow{b}$ by
replacing a particular factor of the form $q\rho_{t,s}q^{-1}$ by its reversal
(since the reversal of $q\rho_{t,s}q^{-1}$ is $q\rho_{s,t}q^{-1}$). Thus, our
situation is symmetric with respect to $s$ and $t$; more precisely, we wind up
in an analogous situation if we replace $s$, $t$, $\overrightarrow{a}$,
$\overrightarrow{b}$, $s^{\prime}$ and $t^{\prime}$ by $t$, $s$,
$\overrightarrow{b}$, $\overrightarrow{a}$, $t^{\prime}$ and $s^{\prime}$, respectively.

We shall prove the following claims:

\textit{Claim 1:} Let $\left(  u,v\right)  \in\mathfrak{N}$ be such that
$\left(  u,v\right)  \neq\left(  s^{\prime},t^{\prime}\right)  $ and $\left(
u,v\right)  \neq\left(  t^{\prime},s^{\prime}\right)  $. Then,
$\operatorname*{has}\nolimits_{u,v}\overrightarrow{b}=\operatorname*{has}%
\nolimits_{u,v}\overrightarrow{a}$.

\textit{Claim 2:} We have $\operatorname*{has}\nolimits_{s^{\prime},t^{\prime
}}\overrightarrow{b}=\operatorname*{has}\nolimits_{s^{\prime},t^{\prime}%
}\overrightarrow{a}-1$.

\textit{Claim 3:} We have $\operatorname*{has}\nolimits_{t^{\prime},s^{\prime
}}\overrightarrow{b}=\operatorname*{has}\nolimits_{t^{\prime},s^{\prime}%
}\overrightarrow{a}+1$.

\textit{Proof of Claim 1:} Assume the contrary. Thus, $\operatorname*{has}%
\nolimits_{u,v}\overrightarrow{b}\neq\operatorname*{has}\nolimits_{u,v}%
\overrightarrow{a}$. Hence, one of the numbers $\operatorname*{has}%
\nolimits_{u,v}\overrightarrow{b}$ and $\operatorname*{has}\nolimits_{u,v}%
\overrightarrow{a}$ equals $1$ and the other equals $0$ (since both
$\operatorname*{has}\nolimits_{u,v}\overrightarrow{b}$ and
$\operatorname*{has}\nolimits_{u,v}\overrightarrow{a}$ belong to $\left\{
0,1\right\}  $). Without loss of generality, we assume that
$\operatorname*{has}\nolimits_{u,v}\overrightarrow{a}=1$ and
$\operatorname*{has}\nolimits_{u,v}\overrightarrow{b}=0$ (because in the other
case, we can replace $s$, $t$, $\overrightarrow{a}$, $\overrightarrow{b}$,
$s^{\prime}$ and $t^{\prime}$ by $t$, $s$, $\overrightarrow{b}$,
$\overrightarrow{a}$, $t^{\prime}$ and $s^{\prime}$, respectively).

The elements $u$ and $v$ are two distinct reflections (since $\left(
u,v\right)  \in\mathfrak{N}$).

Write the tuple $\operatorname*{Invs}\overrightarrow{a}$ as $\left(
\alpha_{1},\alpha_{2},\ldots,\alpha_{k}\right)  $. The tuple
$\operatorname*{Invs}\overrightarrow{b}$ has the same length as
$\operatorname*{Invs}\overrightarrow{a}$, since $\operatorname*{Invs}%
\overrightarrow{b}$ is obtained from $\operatorname*{Invs}\overrightarrow{a}$
by replacing a particular factor of the form $q\rho_{s,t}q^{-1}$ by its
reversal. Hence, write the tuple $\operatorname*{Invs}\overrightarrow{b}$ as
$\left(  \beta_{1},\beta_{2},\ldots,\beta_{k}\right)  $.

From $\operatorname*{has}\nolimits_{u,v}\overrightarrow{a}=1$, we obtain that
$\rho_{u,v}$ appears as a subword of $\operatorname*{Invs}\overrightarrow{a}$.
In other words, $\rho_{u,v}=\left(  \alpha_{i_{1}},\alpha_{i_{2}}%
,\ldots,\alpha_{i_{f}}\right)  $ for some integers $i_{1},i_{2},\ldots,i_{f}$
satisfying $1\leq i_{1}<i_{2}<\cdots<i_{f}\leq k$. Consider these $i_{1}%
,i_{2},\ldots,i_{f}$. From $\operatorname*{has}\nolimits_{u,v}%
\overrightarrow{b}=0$, we conclude that $\rho_{u,v}$ does not appear as a
subword of $\operatorname*{Invs}\overrightarrow{b}$.

On the other hand, $\operatorname*{Invs}\overrightarrow{b}$ is obtained from
$\operatorname*{Invs}\overrightarrow{a}$ by replacing a particular factor of
the form $q\rho_{s,t}q^{-1}$ by its reversal. This factor has $m_{s,t}=m$
letters; thus, it has the form $\left(  \alpha_{p+1},\alpha_{p+2}%
,\ldots,\alpha_{p+m}\right)  $ for some $p\in\left\{  0,1,\ldots,k-m\right\}
$. Consider this $p$. Thus,%
\[
\left(  \alpha_{p+1},\alpha_{p+2},\ldots,\alpha_{p+m}\right)  =q\rho
_{s,t}q^{-1}=\left(  \left(  s^{\prime}t^{\prime}\right)  ^{0}s^{\prime
},\left(  s^{\prime}t^{\prime}\right)  ^{1}s^{\prime},\ldots,\left(
s^{\prime}t^{\prime}\right)  ^{m-1}s^{\prime}\right)  .
\]
In other words,%
\begin{equation}
\alpha_{p+i}=\left(  s^{\prime}t^{\prime}\right)  ^{i-1}s^{\prime
}\ \ \ \ \ \ \ \ \ \ \text{for every }i\in\left\{  1,2,\ldots,m\right\}  .
\label{pf.thm.has.c1.2}%
\end{equation}

We now summarize:

\begin{itemize}
\item The word $\rho_{u,v}$ appears as the subword $\left(  \alpha_{i_{1}%
},\alpha_{i_{2}},\ldots,\alpha_{i_{f}}\right)  $ of $\operatorname*{Invs}%
\overrightarrow{a}$, but does not appear as a subword of $\operatorname*{Invs}%
\overrightarrow{b}$.

\item The word $\operatorname*{Invs}\overrightarrow{b}$ is obtained from
$\operatorname*{Invs}\overrightarrow{a}$ by replacing the factor
\newline$\left(  \alpha_{p+1},\alpha_{p+2},\ldots,\alpha_{p+m}\right)  $ by
its reversal.
\end{itemize}

Thus, replacing the factor $\left(  \alpha_{p+1},\alpha_{p+2},\ldots
,\alpha_{p+m}\right)  $ in $\operatorname*{Invs}\overrightarrow{a}$ by its
reversal must mess up the subword $\left(  \alpha_{i_{1}},\alpha_{i_{2}%
},\ldots,\alpha_{i_{f}}\right)  $ of $\operatorname*{Invs}\overrightarrow{a}$
badly enough that it no longer appears as a subword (not even in different
positions). This can only happen if at least two of the integers $i_{1}%
,i_{2},\ldots,i_{f}$ lie in the interval $\left\{  p+1,p+2,\ldots,p+m\right\}
$.

Hence, at least two of the integers $i_{1},i_{2},\ldots,i_{f}$ lie in the
interval $\left\{  p+1,p+2,\ldots,p+m\right\}  $. In particular, there must be
a $g\in\left\{  1,2,\ldots,f-1\right\}  $ such that the integers $i_{g}$ and
$i_{g+1}$ lie in the interval $\left\{  p+1,p+2,\ldots,p+m\right\}  $ (since
$i_{1}<i_{2}<\cdots<i_{f}$). Consider this $g$.

We have $i_{g}\in\left\{  p+1,p+2,\ldots,p+m\right\}  $. In other words,
$i_{g}=p+r_{g}$ for some $r_{g}\in\left\{  1,2,\ldots,m\right\}  $. Consider
this $r_{g}$.

We have $i_{g+1}\in\left\{  p+1,p+2,\ldots,p+m\right\}  $. In other words,
$i_{g+1}=p+r_{g+1}$ for some $r_{g+1}\in\left\{  1,2,\ldots,m\right\}  $.
Consider this $r_{g+1}$.

We have $\left(  \alpha_{i_{1}},\alpha_{i_{2}},\ldots,\alpha_{i_{f}}\right)
=\rho_{u,v}=\left(  \left(  uv\right)  ^{0}u,\left(  uv\right)  ^{1}%
u,\ldots,\left(  uv\right)  ^{m_{u,v}-1}u\right)  $ (by the definition of
$\rho_{u,v}$). Hence, $\alpha_{i_{g}}=\left(  uv\right)  ^{g-1}u$ and
$\alpha_{i_{g+1}}=\left(  uv\right)  ^{g}u$. Now,%
\begin{align*}
\left(  uv\right)  ^{g-1}u  &  =\alpha_{i_{g}}=\alpha_{p+r_{g}}%
\ \ \ \ \ \ \ \ \ \ \left(  \text{since }i_{g}=p+r_{g}\right) \\
&  =\left(  s^{\prime}t^{\prime}\right)  ^{r_{g}-1}s^{\prime}%
\ \ \ \ \ \ \ \ \ \ \left(  \text{by (\ref{pf.thm.has.c1.2}), applied to
}i=r_{g}\right) \\
&  \in D_{s^{\prime},t^{\prime}}%
\end{align*}
and%
\begin{align*}
\left(  uv\right)  ^{g}u  &  =\alpha_{i_{g+1}}=\alpha_{p+r_{g+1}%
}\ \ \ \ \ \ \ \ \ \ \left(  \text{since }i_{g+1}=p+r_{g+1}\right) \\
&  =\left(  s^{\prime}t^{\prime}\right)  ^{r_{g+1}-1}s^{\prime}%
\ \ \ \ \ \ \ \ \ \ \left(  \text{by (\ref{pf.thm.has.c1.2}), applied to
}i=r_{g+1}\right) \\
&  \in D_{s^{\prime},t^{\prime}}.
\end{align*}
Hence, Lemma \ref{lem.GandH} (applied to $G=W$ and $H=D_{s^{\prime},t^{\prime
}}$) yields $u\in D_{s^{\prime},t^{\prime}}$ and $v\in D_{s^{\prime}%
,t^{\prime}}$.

Furthermore, we have%
\[
\alpha_{i_{1}}=u\ \ \ \ \ \ \ \ \ \ \text{and}\ \ \ \ \ \ \ \ \ \ \alpha
_{i_{f}}=v
\]
\footnote{\textit{Proof.} From $\left(  \alpha_{i_{1}},\alpha_{i_{2}}%
,\ldots,\alpha_{i_{f}}\right)  =\left(  \left(  uv\right)  ^{0}u,\left(
uv\right)  ^{1}u,\ldots,\left(  uv\right)  ^{m_{u,v}-1}u\right)  $, we obtain
$\alpha_{i_{1}}=\underbrace{\left(  uv\right)  ^{0}}_{=1}u=u$.
\par
We have $\left(  uv\right)  ^{m_{u,v}}=1$, and thus $\left(  uv\right)
^{m_{u,v}-1}=\left(  uv\right)  ^{-1}=v^{-1}u^{-1}$.
\par
From $\left(  \alpha_{i_{1}},\alpha_{i_{2}},\ldots,\alpha_{i_{f}}\right)
=\left(  \left(  uv\right)  ^{0}u,\left(  uv\right)  ^{1}u,\ldots,\left(
uv\right)  ^{m_{u,v}-1}u\right)  $, we obtain $\alpha_{i_{f}}%
=\underbrace{\left(  uv\right)  ^{m_{u,v}-1}}_{=v^{-1}u^{-1}}u=v^{-1}%
u^{-1}u=v^{-1}=v$ (since $v$ is a reflection), qed.}.

Now, we have $i_{1}\in\left\{  p+1,p+2,\ldots,p+m\right\}  $ (by a simple
argument\footnote{\textit{Proof.} The element $u$ is a reflection and lies in
$D_{s^{\prime},t^{\prime}}$. Hence, Proposition \ref{prop.rhost} \textbf{(a)}
(applied to $s^{\prime}$ and $t^{\prime}$ instead of $s$ and $t$) shows that
the word $\rho_{s^{\prime},t^{\prime}}$ contains $u$. Since $\rho_{s^{\prime
},t^{\prime}}=q\rho_{s,t}q^{-1}=\left(  \alpha_{p+1},\alpha_{p+2}%
,\ldots,\alpha_{p+m}\right)  $, this shows that the word $\left(  \alpha
_{p+1},\alpha_{p+2},\ldots,\alpha_{p+m}\right)  $ contains $u$. In other
words, $u=\alpha_{M}$ for some $M\in\left\{  p+1,p+2,\ldots,p+m\right\}  $.
Consider this $M$.
\par
But Proposition \ref{prop.Invsles} \textbf{(a)} shows that all entries of the
tuple $\operatorname*{Invs}\overrightarrow{a}$ are distinct. In other words,
the elements $\alpha_{1},\alpha_{2},\ldots,\alpha_{k}$ are pairwise distinct
(since those are the entries of $\operatorname*{Invs}\overrightarrow{a}$).
Hence, from $\alpha_{i_{1}}=u=\alpha_{M}$, we obtain $i_{1}=M\in\left\{
p+1,p+2,\ldots,p+m\right\}  $. Qed.}) and $i_{f}\in\left\{  p+1,p+2,\ldots
,p+m\right\}  $ (by a similar argument, with $v$ occasionally replacing $u$).
Thus, all of the integers $i_{1},i_{2},\ldots,i_{f}$ belong to $\left\{
p+1,p+2,\ldots,p+m\right\}  $ (since $i_{1}<i_{2}<\cdots<i_{f}$).

Now, recall that $f$ is the length of the word $\rho_{u,v}$ (since $\rho
_{u,v}=\left(  \alpha_{i_{1}},\alpha_{i_{2}},\ldots,\alpha_{i_{f}}\right)  $),
and thus equals $m_{u,v}$. Thus, $f=m_{u,v}$.

But $u\in D_{s^{\prime},t^{\prime}}=qD_{s,t}q^{-1}$ and $v\in D_{s^{\prime
},t^{\prime}}=qD_{s,t}q^{-1}$. Hence, Lemma \ref{lem.dihindih} yields
$m_{s,t}=m_{u,v}$. Since $m=m_{s,t}$ and $f=m_{u,v}$, this rewrites as $m=f$.

Recall that all of the integers $i_{1},i_{2},\ldots,i_{f}$ belong to $\left\{
p+1,p+2,\ldots,p+m\right\}  $. Since $i_{1}<i_{2}<\cdots<i_{f}$ and $f=m$,
these integers $i_{1},i_{2},\ldots,i_{f}$ form a strictly increasing sequence
of length $m$. Thus, $\left(  i_{1},i_{2},\ldots,i_{f}\right)  $ is a strictly
increasing sequence of length $m$ whose entries belong to $\left\{
p+1,p+2,\ldots,p+m\right\}  $. But the only such sequence is $\left(
p+1,p+2,\ldots,p+m\right)  $ (because the set $\left\{  p+1,p+2,\ldots
,p+m\right\}  $ has only $m$ elements). Thus, $\left(  i_{1},i_{2}%
,\ldots,i_{f}\right)  =\left(  p+1,p+2,\ldots,p+m\right)  $. In particular,
$i_{1}=p+1$ and $i_{f}=p+m$.

Now, $\alpha_{i_{1}}=u$, so that%
\begin{align*}
u  &  =\alpha_{i_{1}}=\alpha_{p+1}\ \ \ \ \ \ \ \ \ \ \left(  \text{since
}i_{1}=p+1\right) \\
&  =\underbrace{\left(  s^{\prime}t^{\prime}\right)  ^{1-1}}_{=1}s^{\prime
}\ \ \ \ \ \ \ \ \ \ \left(  \text{by (\ref{pf.thm.has.c1.2}), applied to
}i=1\right) \\
&  =s^{\prime}.
\end{align*}
Also, $\alpha_{i_{f}}=v$, so that%
\begin{align*}
v  &  =\alpha_{i_{f}}=\alpha_{p+m}\ \ \ \ \ \ \ \ \ \ \left(  \text{since
}i_{f}=p+m\right) \\
&  =\underbrace{\left(  s^{\prime}t^{\prime}\right)  ^{m-1}}%
_{\substack{=\left(  s^{\prime}t^{\prime}\right)  ^{-1}\\\text{(since }\left(
s^{\prime}t^{\prime}\right)  ^{m}=1\\\text{(since }m=m_{s,t}=m_{s^{\prime
},t^{\prime}}\text{))}}}s^{\prime}\ \ \ \ \ \ \ \ \ \ \left(  \text{by
(\ref{pf.thm.has.c1.2}), applied to }i=m\right) \\
&  =\left(  s^{\prime}t^{\prime}\right)  ^{-1}s^{\prime}=t^{\prime}.
\end{align*}
Combined with $u=s^{\prime}$, this yields $\left(  u,v\right)  =\left(
s^{\prime},t^{\prime}\right)  $, which contradicts $\left(  u,v\right)
\neq\left(  s^{\prime},t^{\prime}\right)  $. This contradiction proves that
our assumption was wrong. Claim 1 is proven.

\textit{Proof of Claim 2:} The word $\operatorname*{Invs}\overrightarrow{b}$
is obtained from $\operatorname*{Invs}\overrightarrow{a}$ by replacing a
particular factor of the form $q\rho_{s,t}q^{-1}$ by its reversal. Thus, the
word $\operatorname*{Invs}\overrightarrow{a}$ has a factor of the form
$q\rho_{s,t}q^{-1}$. Since $q\rho_{s,t}q^{-1}=\rho_{s^{\prime},t^{\prime}}$,
this means that the word $\operatorname*{Invs}\overrightarrow{a}$ has a factor
of the form $\rho_{s^{\prime},t^{\prime}}$. Consequently, the word
$\operatorname*{Invs}\overrightarrow{a}$ has a subword of the form
$\rho_{s^{\prime},t^{\prime}}$. In other words, $\operatorname*{has}%
\nolimits_{s^{\prime},t^{\prime}}\overrightarrow{a}=1$.

The same argument (applied to $t$, $s$, $\overrightarrow{b}$,
$\overrightarrow{a}$, $t^{\prime}$ and $s^{\prime}$ instead of $s$, $t$,
$\overrightarrow{a}$, $\overrightarrow{b}$, $s^{\prime}$ and $t^{\prime}$)
shows that $\operatorname*{has}\nolimits_{t^{\prime},s^{\prime}}%
\overrightarrow{b}=1$. In other words, the word $\operatorname*{Invs}%
\overrightarrow{b}$ has a subword of the form $\rho_{t^{\prime},s^{\prime}}$.
Hence, the word $\operatorname*{Invs}\overrightarrow{b}$ has no subword of the
form $\rho_{s^{\prime},t^{\prime}}$ (because Proposition \ref{prop.has}
\textbf{(b)} (applied to $\overrightarrow{b}$, $s^{\prime}$ and $t^{\prime}$
instead of $\overrightarrow{a}$, $s$ and $t$) shows that the words
$\rho_{s^{\prime},t^{\prime}}$ and $\rho_{t^{\prime},s^{\prime}}$ cannot both
appear as subwords of $\operatorname*{Invs}\overrightarrow{b}$). In other
words, $\operatorname*{has}\nolimits_{s^{\prime},t^{\prime}}\overrightarrow{b}%
=0$.

Combining this with $\operatorname*{has}\nolimits_{s^{\prime},t^{\prime}%
}\overrightarrow{a}=1$, we immediately obtain $\operatorname*{has}%
\nolimits_{s^{\prime},t^{\prime}}\overrightarrow{b}=\operatorname*{has}%
\nolimits_{s^{\prime},t^{\prime}}\overrightarrow{a}-1$. Thus, Claim 2 is proven.

\textit{Proof of Claim 3:} Applying Claim 2 to $t$, $s$, $\overrightarrow{b}$,
$\overrightarrow{a}$, $t^{\prime}$ and $s^{\prime}$ instead of $s$, $t$,
$\overrightarrow{a}$, $\overrightarrow{b}$, $s^{\prime}$ and $t^{\prime}$, we
obtain $\operatorname*{has}\nolimits_{t^{\prime},s^{\prime}}\overrightarrow{a}%
=\operatorname*{has}\nolimits_{t^{\prime},s^{\prime}}\overrightarrow{b}-1$. In
other words, $\operatorname*{has}\nolimits_{t^{\prime},s^{\prime}%
}\overrightarrow{b}=\operatorname*{has}\nolimits_{t^{\prime},s^{\prime}%
}\overrightarrow{a}+1$. This proves Claim 3.

Now, our goal is to prove that $\operatorname*{Has}\overrightarrow{b}%
=\operatorname*{Has}\overrightarrow{a}-\left(  s^{\prime},t^{\prime}\right)
+\left(  t^{\prime},s^{\prime}\right)  $. But the definition of
$\operatorname*{Has}\overrightarrow{b}$ yields%
\begin{align*}
&  \operatorname*{Has}\overrightarrow{b}\\
&  =\sum_{\left(  u,v\right)  \in\mathfrak{N}}\operatorname*{has}%
\nolimits_{u,v}\overrightarrow{b}\cdot\left(  u,v\right) \\
&  =\sum_{\substack{\left(  u,v\right)  \in\mathfrak{N};\\\left(  u,v\right)
\neq\left(  s^{\prime},t^{\prime}\right)  ;\\\left(  u,v\right)  \neq\left(
t^{\prime},s^{\prime}\right)  }}\underbrace{\operatorname*{has}\nolimits_{u,v}%
\overrightarrow{b}}_{\substack{=\operatorname*{has}\nolimits_{u,v}%
\overrightarrow{a}\\\text{(by Claim 1)}}}\cdot\left(  u,v\right)
+\underbrace{\operatorname*{has}\nolimits_{s^{\prime},t^{\prime}%
}\overrightarrow{b}}_{\substack{=\operatorname*{has}\nolimits_{s^{\prime
},t^{\prime}}\overrightarrow{a}-1\\\text{(by Claim 2)}}}\cdot\left(
s^{\prime},t^{\prime}\right)  +\underbrace{\operatorname*{has}%
\nolimits_{t^{\prime},s^{\prime}}\overrightarrow{b}}%
_{\substack{=\operatorname*{has}\nolimits_{t^{\prime},s^{\prime}%
}\overrightarrow{a}+1\\\text{(by Claim 3)}}}\cdot\left(  t^{\prime},s^{\prime
}\right) \\
&  \ \ \ \ \ \ \ \ \ \ \left(  \text{since }\left(  s^{\prime},t^{\prime
}\right)  \neq\left(  t^{\prime},s^{\prime}\right)  \right) \\
&  =\sum_{\substack{\left(  u,v\right)  \in\mathfrak{N};\\\left(  u,v\right)
\neq\left(  s^{\prime},t^{\prime}\right)  ;\\\left(  u,v\right)  \neq\left(
t^{\prime},s^{\prime}\right)  }}\operatorname*{has}\nolimits_{u,v}%
\overrightarrow{a}\cdot\left(  u,v\right)  +\left(  \operatorname*{has}%
\nolimits_{s^{\prime},t^{\prime}}\overrightarrow{a}-1\right)  \cdot\left(
s^{\prime},t^{\prime}\right)  +\left(  \operatorname*{has}\nolimits_{t^{\prime
},s^{\prime}}\overrightarrow{a}+1\right)  \cdot\left(  t^{\prime},s^{\prime
}\right) \\
&  =\sum_{\substack{\left(  u,v\right)  \in\mathfrak{N};\\\left(  u,v\right)
\neq\left(  s^{\prime},t^{\prime}\right)  ;\\\left(  u,v\right)  \neq\left(
t^{\prime},s^{\prime}\right)  }}\operatorname*{has}\nolimits_{u,v}%
\overrightarrow{a}\cdot\left(  u,v\right)  +\operatorname*{has}%
\nolimits_{s^{\prime},t^{\prime}}\overrightarrow{a}\cdot\left(  s^{\prime
},t^{\prime}\right)  -\left(  s^{\prime},t^{\prime}\right)
+\operatorname*{has}\nolimits_{t^{\prime},s^{\prime}}\overrightarrow{a}%
\cdot\left(  t^{\prime},s^{\prime}\right)  +\left(  t^{\prime},s^{\prime
}\right) \\
&  =\underbrace{\sum_{\substack{\left(  u,v\right)  \in\mathfrak{N};\\\left(
u,v\right)  \neq\left(  s^{\prime},t^{\prime}\right)  ;\\\left(  u,v\right)
\neq\left(  t^{\prime},s^{\prime}\right)  }}\operatorname*{has}\nolimits_{u,v}%
\overrightarrow{a}\cdot\left(  u,v\right)  +\operatorname*{has}%
\nolimits_{s^{\prime},t^{\prime}}\overrightarrow{a}\cdot\left(  s^{\prime
},t^{\prime}\right)  +\operatorname*{has}\nolimits_{t^{\prime},s^{\prime}%
}\overrightarrow{a}\cdot\left(  t^{\prime},s^{\prime}\right)  }%
_{\substack{=\sum_{\left(  u,v\right)  \in\mathfrak{N}}\operatorname*{has}%
\nolimits_{u,v}\overrightarrow{a}\cdot\left(  u,v\right)  \\\text{(since
}\left(  s^{\prime},t^{\prime}\right)  \neq\left(  t^{\prime},s^{\prime
}\right)  \text{)}}}-\left(  s^{\prime},t^{\prime}\right)  +\left(  t^{\prime
},s^{\prime}\right) \\
&  =\underbrace{\sum_{\left(  u,v\right)  \in\mathfrak{N}}\operatorname*{has}%
\nolimits_{u,v}\overrightarrow{a}\cdot\left(  u,v\right)  }%
_{=\operatorname*{Has}\overrightarrow{a}}-\left(  s^{\prime},t^{\prime
}\right)  +\left(  t^{\prime},s^{\prime}\right)  =\operatorname*{Has}%
\overrightarrow{a}-\left(  s^{\prime},t^{\prime}\right)  +\left(  t^{\prime
},s^{\prime}\right)  .
\end{align*}
This proves Theorem \ref{thm.has}.
\end{proof}

\section{The proof of Theorem \ref{thm.BCL}}

We are now ready to establish Theorem \ref{thm.BCL}:

\begin{proof}
[Proof of Theorem \ref{thm.BCL}.]We shall use the \emph{Iverson bracket
notation}: i.e., if $\mathcal{A}$ is any logical statement, then we shall
write $\left[  \mathcal{A}\right]  $ for the integer $%
\begin{cases}
1, & \text{if }\mathcal{A}\text{ is true};\\
0, & \text{if }\mathcal{A}\text{ is false}%
\end{cases}
$.

For every $z\in\mathbb{Z}\left[  \mathfrak{N}\right]  $ and $n\in\mathfrak{N}%
$, we let $\operatorname*{coord}\nolimits_{n}z\in\mathbb{Z}$ be the
$n$-coordinate of $z$ (with respect to the basis $\mathfrak{N}$ of
$\mathbb{Z}\left[  \mathfrak{N}\right]  $).

For every $z\in\mathbb{Z}\left[  \mathfrak{N}\right]  $ and $N\subseteq
\mathfrak{N}$, we set $\operatorname*{coord}\nolimits_{N}z=\sum_{n\in
N}\operatorname*{coord}\nolimits_{n}z$.

We have $c=\left[  \left(  s,t\right)  \right]  $, thus $c_{\mathfrak{N}%
}=\left[  \left[  \left(  s,t\right)  \right]  \right]  $ and
$c^{\operatorname{op}}=\left[  \left(  t,s\right)  \right]  $. From the latter
equality, we obtain $\left(  c^{\operatorname*{op}}\right)  _{\mathfrak{N}%
}=\left[  \left[  \left(  t,s\right)  \right]  \right]  $.

Let $\overrightarrow{c_{1}},\overrightarrow{c_{2}},\ldots
,\overrightarrow{c_{k}},\overrightarrow{c_{k+1}}$ be the vertices on the cycle
$C$ (listed in the order they are encountered when we traverse the cycle,
starting at some arbitrarily chosen vertex on the cycle and going until we
return to the starting point). Thus:

\begin{itemize}
\item We have $\overrightarrow{c_{k+1}}=\overrightarrow{c_{1}}$.

\item There is an arc from $\overrightarrow{c_{i}}$ to
$\overrightarrow{c_{i+1}}$ for every $i\in\left\{  1,2,\ldots,k\right\}  $.
\end{itemize}

Fix $i\in\left\{  1,2,\ldots,k\right\}  $. Then, there is an arc from
$\overrightarrow{c_{i}}$ to $\overrightarrow{c_{i+1}}$. In other words, there
exists some $\left(  s_{i},t_{i}\right)  \in\mathfrak{M}$ such that
$\overrightarrow{c_{i+1}}$ is obtained from $\overrightarrow{c_{i}}$ by an
$\left(  s_{i},t_{i}\right)  $-braid move. Consider this $\left(  s_{i}%
,t_{i}\right)  $. Thus,%
\begin{equation}
\text{the color of the arc from }\overrightarrow{c_{i}}\text{ to
}\overrightarrow{c_{i+1}}\text{ is }\left[  \left(  s_{i},t_{i}\right)
\right]  . \label{pf.thm.BCL.a.color}%
\end{equation}
Proposition \ref{prop.Invsles} \textbf{(b)} (applied to $\overrightarrow{c_{i}%
}$, $\overrightarrow{c_{i+1}}$, $s_{i}$ and $t_{i}$ instead of
$\overrightarrow{a}$, $\overrightarrow{b}$, $s$ and $t$) shows that there
exists a $q\in W$ such that $\operatorname*{Invs}\overrightarrow{c_{i+1}}$ is
obtained from $\operatorname*{Invs}\overrightarrow{c_{i}}$ by replacing a
particular factor of the form $q\rho_{s_{i},t_{i}}q^{-1}$ by its reversal. Let
us denote this $q$ by $q_{i}$. Set $s_{i}^{\prime}=q_{i}s_{i}q_{i}^{-1}$ and
$t_{i}^{\prime}=q_{i}t_{i}q_{i}^{-1}$. Thus, $s_{i}^{\prime}\neq t_{i}%
^{\prime}$ (since $s_{i}\neq t_{i}$) and $m_{s_{i}^{\prime},t_{i}^{\prime}%
}=m_{s_{i},t_{i}}<\infty$ (since $\left(  s_{i},t_{i}\right)  \in\mathfrak{M}%
$). Also, the definitions of $s_{i}^{\prime}$ and $t_{i}^{\prime}$ yield
$\left(  s_{i}^{\prime},t_{i}^{\prime}\right)  =\left(  q_{i}s_{i}q_{i}%
^{-1},q_{i}t_{i}q_{i}^{-1}\right)  =q_{i}\underbrace{\left(  s_{i}%
,t_{i}\right)  }_{\in\mathfrak{M}}q_{i}^{-1}\in q_{i}\mathfrak{M}q_{i}%
^{-1}\subseteq\mathfrak{N}$. From $s_{i}^{\prime}=q_{i}s_{i}q_{i}^{-1}$ and
$t_{i}^{\prime}=q_{i}t_{i}q_{i}^{-1}$, we obtain $\left(  s_{i}^{\prime}%
,t_{i}^{\prime}\right)  \approx\left(  s_{i},t_{i}\right)  $.

We shall now show that%
\begin{equation}
\operatorname*{coord}\nolimits_{c_{\mathfrak{N}}}\left(  \operatorname*{Has}%
\overrightarrow{c_{i+1}}-\operatorname*{Has}\overrightarrow{c_{i}}\right)
=\left[  \left[  \left(  s_{i},t_{i}\right)  \right]  =c^{\operatorname*{op}%
}\right]  -\left[  \left[  \left(  s_{i},t_{i}\right)  \right]  =c\right]  .
\label{pf.thm.BCL.a.hasdiff1}%
\end{equation}

\textit{Proof of (\ref{pf.thm.BCL.a.hasdiff1}):} We have the following chain
of logical equivalences:%
\begin{align*}
&  \ \left(  \left(  t_{i}^{\prime},s_{i}^{\prime}\right)  \in
\underbrace{c_{\mathfrak{N}}}_{=\left[  \left[  \left(  s,t\right)  \right]
\right]  }\right) \\
&  \Longleftrightarrow\ \left(  \left(  t_{i}^{\prime},s_{i}^{\prime}\right)
\in\left[  \left[  \left(  s,t\right)  \right]  \right]  \right)
\ \Longleftrightarrow\ \left(  \left(  t_{i}^{\prime},s_{i}^{\prime}\right)
\approx\left(  s,t\right)  \right)  \ \Longleftrightarrow\ \left(  \left(
s_{i}^{\prime},t_{i}^{\prime}\right)  \approx\left(  t,s\right)  \right) \\
&  \Longleftrightarrow\ \left(  \left(  s_{i},t_{i}\right)  \approx\left(
t,s\right)  \right)  \ \ \ \ \ \ \ \ \ \ \left(  \text{since }\left(
s_{i}^{\prime},t_{i}^{\prime}\right)  \approx\left(  s_{i},t_{i}\right)
\right) \\
&  \Longleftrightarrow\ \left(  \left(  s_{i},t_{i}\right)  \sim\left(
t,s\right)  \right)  \ \ \ \ \ \ \ \ \ \ \left(  \text{since the restriction
of the relation }\approx\text{ to }\mathfrak{M}\text{ is }\sim\right) \\
&  \Longleftrightarrow\ \left(  \left(  s_{i},t_{i}\right)  \in
\underbrace{\left[  \left(  t,s\right)  \right]  }_{=c^{\operatorname*{op}}%
}\right)  \ \Longleftrightarrow\ \left(  \left(  s_{i},t_{i}\right)  \in
c^{\operatorname*{op}}\right)  \ \Longleftrightarrow\ \left(  \left[  \left(
s_{i},t_{i}\right)  \right]  =c^{\operatorname*{op}}\right)  .
\end{align*}
Hence,%
\begin{equation}
\left[  \left(  t_{i}^{\prime},s_{i}^{\prime}\right)  \in c_{\mathfrak{N}%
}\right]  =\left[  \left[  \left(  s_{i},t_{i}\right)  \right]
=c^{\operatorname*{op}}\right]  . \label{pf.thm.BCL.a.hasdiff1.eq1}%
\end{equation}

Also, we have the following chain of logical equivalences:%
\begin{align*}
&  \ \left(  \left(  s_{i}^{\prime},t_{i}^{\prime}\right)  \in
\underbrace{c_{\mathfrak{N}}}_{=\left[  \left[  \left(  s,t\right)  \right]
\right]  }\right) \\
&  \Longleftrightarrow\ \left(  \left(  s_{i}^{\prime},t_{i}^{\prime}\right)
\in\left[  \left[  \left(  s,t\right)  \right]  \right]  \right)
\ \Longleftrightarrow\ \left(  \left(  s_{i}^{\prime},t_{i}^{\prime}\right)
\approx\left(  s,t\right)  \right) \\
&  \Longleftrightarrow\ \left(  \left(  s_{i},t_{i}\right)  \approx\left(
s,t\right)  \right)  \ \ \ \ \ \ \ \ \ \ \left(  \text{since }\left(
s_{i}^{\prime},t_{i}^{\prime}\right)  \approx\left(  s_{i},t_{i}\right)
\right) \\
&  \Longleftrightarrow\ \left(  \left(  s_{i},t_{i}\right)  \sim\left(
s,t\right)  \right)  \ \ \ \ \ \ \ \ \ \ \left(  \text{since the restriction
of the relation }\approx\text{ to }\mathfrak{M}\text{ is }\sim\right) \\
&  \Longleftrightarrow\ \left(  \left(  s_{i},t_{i}\right)  \in
\underbrace{\left[  \left(  s,t\right)  \right]  }_{=c}\right)
\ \Longleftrightarrow\ \left(  \left(  s_{i},t_{i}\right)  \in c\right)
\ \Longleftrightarrow\ \left(  \left[  \left(  s_{i},t_{i}\right)  \right]
=c\right)  .
\end{align*}
Hence,%
\begin{equation}
\left[  \left(  s_{i}^{\prime},t_{i}^{\prime}\right)  \in c_{\mathfrak{N}%
}\right]  =\left[  \left[  \left(  s_{i},t_{i}\right)  \right]  =c\right]  .
\label{pf.thm.BCL.a.hasdiff1.eq2}%
\end{equation}

Applying (\ref{eq.thm.has.a}) to $\overrightarrow{c_{i}}$,
$\overrightarrow{c_{i+1}}$, $s_{i}$, $t_{i}$, $q_{i}$, $s_{i}^{\prime}$ and
$t_{i}^{\prime}$ instead of $\overrightarrow{a}$, $\overrightarrow{b}$, $s$,
$t$, $q$, $s^{\prime}$ and $t^{\prime}$, we obtain $\operatorname*{Has}%
\overrightarrow{c_{i+1}}=\operatorname*{Has}\overrightarrow{c_{i}}-\left(
s_{i}^{\prime},t_{i}^{\prime}\right)  +\left(  t_{i}^{\prime},s_{i}^{\prime
}\right)  $. In other words, $\operatorname*{Has}\overrightarrow{c_{i+1}%
}-\operatorname*{Has}\overrightarrow{c_{i}}=\left(  t_{i}^{\prime}%
,s_{i}^{\prime}\right)  -\left(  s_{i}^{\prime},t_{i}^{\prime}\right)  $.
Thus,%
\begin{align*}
&  \operatorname*{coord}\nolimits_{c_{\mathfrak{N}}}\left(
\operatorname*{Has}\overrightarrow{c_{i+1}}-\operatorname*{Has}%
\overrightarrow{c_{i}}\right) \\
&  =\operatorname*{coord}\nolimits_{c_{\mathfrak{N}}}\left(  \left(
t_{i}^{\prime},s_{i}^{\prime}\right)  -\left(  s_{i}^{\prime},t_{i}^{\prime
}\right)  \right)  =\underbrace{\operatorname*{coord}%
\nolimits_{c_{\mathfrak{N}}}\left(  t_{i}^{\prime},s_{i}^{\prime}\right)
}_{\substack{=\left[  \left(  t_{i}^{\prime},s_{i}^{\prime}\right)  \in
c_{\mathfrak{N}}\right]  \\=\left[  \left[  \left(  s_{i},t_{i}\right)
\right]  =c^{\operatorname*{op}}\right]  \\\text{(by
(\ref{pf.thm.BCL.a.hasdiff1.eq1}))}}}-\underbrace{\operatorname*{coord}%
\nolimits_{c_{\mathfrak{N}}}\left(  s_{i}^{\prime},t_{i}^{\prime}\right)
}_{\substack{=\left[  \left(  s_{i}^{\prime},t_{i}^{\prime}\right)  \in
c_{\mathfrak{N}}\right]  \\=\left[  \left[  \left(  s_{i},t_{i}\right)
\right]  =c\right]  \\\text{(by (\ref{pf.thm.BCL.a.hasdiff1.eq2}))}}}\\
&  =\left[  \left[  \left(  s_{i},t_{i}\right)  \right]
=c^{\operatorname*{op}}\right]  -\left[  \left[  \left(  s_{i},t_{i}\right)
\right]  =c\right]  .
\end{align*}
This proves (\ref{pf.thm.BCL.a.hasdiff1}).

Now, let us forget that we fixed $i$. Thus, for every $i\in\left\{
1,2,\ldots,k\right\}  $, we have defined $\left(  s_{i},t_{i}\right)
\in\mathfrak{M}$ satisfying (\ref{pf.thm.BCL.a.color}) and
(\ref{pf.thm.BCL.a.hasdiff1}).

We have $\operatorname*{coord}\nolimits_{c_{\mathfrak{N}}}\left(
\operatorname*{Has}\overrightarrow{c_{i+1}}-\operatorname*{Has}%
\overrightarrow{c_{i}}\right)  =\operatorname*{coord}%
\nolimits_{c_{\mathfrak{N}}}\left(  \operatorname*{Has}\overrightarrow{c_{i+1}%
}\right)  -\operatorname*{coord}\nolimits_{c_{\mathfrak{N}}}\left(
\operatorname*{Has}\overrightarrow{c_{i}}\right)  $ for all $i\in\left\{
1,2,\ldots,k\right\}  $. Hence,%
\begin{align*}
&  \sum_{i=1}^{k}\operatorname*{coord}\nolimits_{c_{\mathfrak{N}}}\left(
\operatorname*{Has}\overrightarrow{c_{i+1}}-\operatorname*{Has}%
\overrightarrow{c_{i}}\right) \\
&  =\sum_{i=1}^{k}\left(  \operatorname*{coord}\nolimits_{c_{\mathfrak{N}}%
}\left(  \operatorname*{Has}\overrightarrow{c_{i+1}}\right)
-\operatorname*{coord}\nolimits_{c_{\mathfrak{N}}}\left(  \operatorname*{Has}%
\overrightarrow{c_{i}}\right)  \right)  =0
\end{align*}
(by the telescope principle). Hence,%
\begin{align*}
0  &  =\sum_{i=1}^{k}\operatorname*{coord}\nolimits_{c_{\mathfrak{N}}}\left(
\operatorname*{Has}\overrightarrow{c_{i+1}}-\operatorname*{Has}%
\overrightarrow{c_{i}}\right) \\
&  =\sum_{i=1}^{k}\left(  \left[  \left[  \left(  s_{i},t_{i}\right)  \right]
=c^{\operatorname*{op}}\right]  -\left[  \left[  \left(  s_{i},t_{i}\right)
\right]  =c\right]  \right)  \ \ \ \ \ \ \ \ \ \ \left(  \text{by
(\ref{pf.thm.BCL.a.hasdiff1})}\right) \\
&  =\sum_{i=1}^{k}\left[  \left[  \left(  s_{i},t_{i}\right)  \right]
=c^{\operatorname*{op}}\right]  -\sum_{i=1}^{k}\left[  \left[  \left(
s_{i},t_{i}\right)  \right]  =c\right]  .
\end{align*}
Comparing this with%
\begin{align*}
&  \left(  \text{the number of arcs colored }c^{\operatorname*{op}}\text{
appearing in }C\right) \\
&  \ \ \ \ \ \ \ \ \ \ -\left(  \text{the number of arcs colored }c\text{
appearing in }C\right) \\
&  =\sum_{i=1}^{k}\left[  \left(  \text{the color of the arc from
}\overrightarrow{c_{i}}\text{ to }\overrightarrow{c_{i+1}}\right)
=c^{\operatorname*{op}}\right] \\
&  \ \ \ \ \ \ \ \ \ \ -\sum_{i=1}^{k}\left[  \left(  \text{the color of the
arc from }\overrightarrow{c_{i}}\text{ to }\overrightarrow{c_{i+1}}\right)
=c\right] \\
&  =\sum_{i=1}^{k}\left[  \left[  \left(  s_{i},t_{i}\right)  \right]
=c^{\operatorname*{op}}\right]  -\sum_{i=1}^{k}\left[  \left[  \left(
s_{i},t_{i}\right)  \right]  =c\right]  \ \ \ \ \ \ \ \ \ \ \left(  \text{by
(\ref{pf.thm.BCL.a.color})}\right)  ,
\end{align*}
we obtain%
\begin{align*}
&  \left(  \text{the number of arcs colored }c^{\operatorname*{op}}\text{
appearing in }C\right) \\
&  \ \ \ \ \ \ \ \ \ \ -\left(  \text{the number of arcs colored }c\text{
appearing in }C\right) \\
&  =0.
\end{align*}
In other words, the number of arcs colored $c$ appearing in $C$ equals the
number of arcs colored $c^{\operatorname*{op}}$ appearing in $C$. This proves
Theorem \ref{thm.BCL} \textbf{(a)}. \medskip

\textbf{(b)} If $c\neq c^{\operatorname*{op}}$, then Theorem \ref{thm.BCL}
\textbf{(b)} follows immediately from Theorem \ref{thm.BCL} \textbf{(a)}.
Thus, for the rest of this proof, assume that $c=c^{\operatorname*{op}}$
(without loss of generality).

We have $\left[  \left(  s,t\right)  \right]  =c=c^{\operatorname*{op}%
}=\left[  \left(  t,s\right)  \right]  $, so that $\left(  t,s\right)
\sim\left(  s,t\right)  $. Hence, $\left(  t,s\right)  \approx\left(
s,t\right)  $ (since $\sim$ is the restriction of the relation $\approx$ to
$\mathfrak{M}$).

Fix some total order on the set $T$. Let $d$ be the subset $\left\{  \left(
u,v\right)  \in c_{\mathfrak{N}}\ \mid\ u<v\right\}  $ of $c_{\mathfrak{N}}$.

Fix $i\in\left\{  1,2,\ldots,k\right\}  $. We shall now show that%
\begin{equation}
\operatorname*{coord}\nolimits_{d}\left(  \operatorname*{Has}%
\overrightarrow{c_{i+1}}-\operatorname*{Has}\overrightarrow{c_{i}}\right)
\equiv\left[  \left[  \left(  s_{i},t_{i}\right)  \right]  =c\right]
\operatorname{mod}2. \label{pf.thm.BCL.b.hasdiff1}%
\end{equation}

\textit{Proof of (\ref{pf.thm.BCL.b.hasdiff1}):} Define $q_{i}$,
$s_{i}^{\prime}$ and $t_{i}^{\prime}$ as before. We have $s_{i}^{\prime}\neq
t_{i}^{\prime}$. Hence, either $s_{i}^{\prime}<t_{i}^{\prime}$ or
$t_{i}^{\prime}<s_{i}^{\prime}$.

We have the following equivalences:%
\begin{align}
\left(  \left(  t_{i}^{\prime},s_{i}^{\prime}\right)  \in c_{\mathfrak{N}%
}\right)  \  &  \Longleftrightarrow\ \left(  \left(  t_{i}^{\prime}%
,s_{i}^{\prime}\right)  \in\left[  \left[  \left(  s,t\right)  \right]
\right]  \right)  \ \ \ \ \ \ \ \ \ \ \left(  \text{since }c_{\mathfrak{N}%
}=\left[  \left[  \left(  s,t\right)  \right]  \right]  \right) \nonumber\\
&  \Longleftrightarrow\ \left(  \left(  t_{i}^{\prime},s_{i}^{\prime}\right)
\approx\left(  s,t\right)  \right)  \ \Longleftrightarrow\ \left(
s_{i}^{\prime},t_{i}^{\prime}\right)  \approx\left(  t,s\right)
\ \Longleftrightarrow\ \left(  \left(  s_{i},t_{i}\right)  \approx\left(
s,t\right)  \right) \nonumber\\
&  \ \ \ \ \ \ \ \ \ \ \ \left(  \text{since }\left(  s_{i}^{\prime}%
,t_{i}^{\prime}\right)  \approx\left(  s_{i},t_{i}\right)  \text{ and }\left(
t,s\right)  \approx\left(  s,t\right)  \right) \nonumber\\
&  \Longleftrightarrow\ \left(  \left(  s_{i},t_{i}\right)  \sim\left(
s,t\right)  \right)  \label{pf.thm.BCL.b.hasdiff1.pf.equiv1}%
\end{align}
(since the restriction of the relation $\approx$ to $\mathfrak{M}$ is $\sim$)
and%
\begin{align}
\left(  \left(  s_{i}^{\prime},t_{i}^{\prime}\right)  \in c_{\mathfrak{N}%
}\right)  \  &  \Longleftrightarrow\ \left(  \left(  s_{i}^{\prime}%
,t_{i}^{\prime}\right)  \in\left[  \left[  \left(  s,t\right)  \right]
\right]  \right)  \ \ \ \ \ \ \ \ \ \ \left(  \text{since }c_{\mathfrak{N}%
}=\left[  \left[  \left(  s,t\right)  \right]  \right]  \right) \nonumber\\
&  \Longleftrightarrow\ \left(  \left(  s_{i}^{\prime},t_{i}^{\prime}\right)
\approx\left(  s,t\right)  \right)  \ \Longleftrightarrow\ \left(  \left(
s_{i},t_{i}\right)  \approx\left(  s,t\right)  \right) \nonumber\\
&  \Longleftrightarrow\ \left(  \left(  s_{i},t_{i}\right)  \sim\left(
s,t\right)  \right)  . \label{pf.thm.BCL.b.hasdiff1.pf.equiv2}%
\end{align}

Applying (\ref{eq.thm.has.a}) to $\overrightarrow{c_{i}}$,
$\overrightarrow{c_{i+1}}$, $s_{i}$, $t_{i}$, $q_{i}$, $s_{i}^{\prime}$ and
$t_{i}^{\prime}$ instead of $\overrightarrow{a}$, $\overrightarrow{b}$, $s$,
$t$, $q$, $s^{\prime}$ and $t^{\prime}$, we obtain $\operatorname*{Has}%
\overrightarrow{c_{i+1}}=\operatorname*{Has}\overrightarrow{c_{i}}-\left(
s_{i}^{\prime},t_{i}^{\prime}\right)  +\left(  t_{i}^{\prime},s_{i}^{\prime
}\right)  $. In other words, $\operatorname*{Has}\overrightarrow{c_{i+1}%
}-\operatorname*{Has}\overrightarrow{c_{i}}=\left(  t_{i}^{\prime}%
,s_{i}^{\prime}\right)  -\left(  s_{i}^{\prime},t_{i}^{\prime}\right)  $.
Thus,%
\begin{align*}
&  \operatorname*{coord}\nolimits_{d}\left(  \operatorname*{Has}%
\overrightarrow{c_{i+1}}-\operatorname*{Has}\overrightarrow{c_{i}}\right) \\
&  =\operatorname*{coord}\nolimits_{d}\left(  \left(  t_{i}^{\prime}%
,s_{i}^{\prime}\right)  -\left(  s_{i}^{\prime},t_{i}^{\prime}\right)
\right)  =\operatorname*{coord}\nolimits_{d}\left(  t_{i}^{\prime}%
,s_{i}^{\prime}\right)  -\operatorname*{coord}\nolimits_{d}\left(
s_{i}^{\prime},t_{i}^{\prime}\right) \\
&  =\left[  \left(  t_{i}^{\prime},s_{i}^{\prime}\right)  \in d\right]
-\left[  \left(  s_{i}^{\prime},t_{i}^{\prime}\right)  \in d\right] \\
&  \equiv\left[  \left(  t_{i}^{\prime},s_{i}^{\prime}\right)  \in d\right]
+\left[  \left(  s_{i}^{\prime},t_{i}^{\prime}\right)  \in d\right] \\
&  =\left[  \left(  t_{i}^{\prime},s_{i}^{\prime}\right)  \in c_{\mathfrak{N}%
}\text{ and }t_{i}^{\prime}<s_{i}^{\prime}\right]  +\left[  \left(
s_{i}^{\prime},t_{i}^{\prime}\right)  \in c_{\mathfrak{N}}\text{ and }%
s_{i}^{\prime}<t_{i}^{\prime}\right] \\
&  \ \ \ \ \ \ \ \ \ \ \left(  \text{since a pair }\left(  u,v\right)  \text{
belongs to }d\text{ if and only if }\left(  u,v\right)  \in c_{\mathfrak{N}%
}\text{ and }u<v\right) \\
&  =\left[  \left(  s_{i},t_{i}\right)  \sim\left(  s,t\right)  \text{ and
}t_{i}^{\prime}<s_{i}^{\prime}\right]  +\left[  \left(  s_{i},t_{i}\right)
\sim\left(  s,t\right)  \text{ and }s_{i}^{\prime}<t_{i}^{\prime}\right] \\
&  \ \ \ \ \ \ \ \ \ \ \left(  \text{by the equivalences
(\ref{pf.thm.BCL.b.hasdiff1.pf.equiv1}) and
(\ref{pf.thm.BCL.b.hasdiff1.pf.equiv2})}\right) \\
&  =\left[  \left(  s_{i},t_{i}\right)  \sim\left(  s,t\right)  \right]
\ \ \ \ \ \ \ \ \ \ \left(  \text{because either }s_{i}^{\prime}<t_{i}%
^{\prime}\text{ or }t_{i}^{\prime}<s_{i}^{\prime}\right) \\
&  =\left[  \left[  \left(  s_{i},t_{i}\right)  \right]  =\left[  \left(
s,t\right)  \right]  \right]  =\left[  \left[  \left(  s_{i},t_{i}\right)
\right]  =c\right]  \operatorname{mod}2\ \ \ \ \ \ \ \ \ \ \left(  \text{since
}\left[  \left(  s,t\right)  \right]  =c\right)  .
\end{align*}
This proves (\ref{pf.thm.BCL.b.hasdiff1}).

Now, $\operatorname*{coord}\nolimits_{d}\left(  \operatorname*{Has}%
\overrightarrow{c_{i+1}}-\operatorname*{Has}\overrightarrow{c_{i}}\right)
=\operatorname*{coord}\nolimits_{d}\left(  \operatorname*{Has}%
\overrightarrow{c_{i+1}}\right)  -\operatorname*{coord}\nolimits_{d}\left(
\operatorname*{Has}\overrightarrow{c_{i}}\right)  $ for each $i\in\left\{
1,2,\ldots,k\right\}  $; hence,%
\[
\sum_{i=1}^{k}\operatorname*{coord}\nolimits_{d}\left(  \operatorname*{Has}%
\overrightarrow{c_{i+1}}-\operatorname*{Has}\overrightarrow{c_{i}}\right)
=\sum_{i=1}^{k}\left(  \operatorname*{coord}\nolimits_{d}\left(
\operatorname*{Has}\overrightarrow{c_{i+1}}\right)  -\operatorname*{coord}%
\nolimits_{d}\left(  \operatorname*{Has}\overrightarrow{c_{i}}\right)
\right)  =0
\]
(by the telescope principle). Hence,%
\begin{align*}
0  &  =\sum_{i=1}^{k}\operatorname*{coord}\nolimits_{d}\left(
\operatorname*{Has}\overrightarrow{c_{i+1}}-\operatorname*{Has}%
\overrightarrow{c_{i}}\right) \\
&  \equiv\sum_{i=1}^{k}\left[  \left[  \left(  s_{i},t_{i}\right)  \right]
=c\right]  \ \ \ \ \ \ \ \ \ \ \left(  \text{by (\ref{pf.thm.BCL.b.hasdiff1}%
)}\right) \\
&  =\sum_{i=1}^{k}\left[  \left(  \text{the color of the arc from
}\overrightarrow{c_{i}}\text{ to }\overrightarrow{c_{i+1}}\right)  =c\right]
\ \ \ \ \ \ \ \ \ \ \left(  \text{by (\ref{pf.thm.BCL.a.color})}\right) \\
&  =\left(  \text{the number of arcs colored }c\text{ appearing in }C\right)
\operatorname{mod}2.
\end{align*}
Thus, the number of arcs colored $c$ appearing in $C$ is even. In other words,
the number of arcs whose color belongs to $\left\{  c\right\}  $ appearing in
$C$ is even. In other words, the number of arcs whose color belongs to
$\left\{  c,c^{\operatorname*{op}}\right\}  $ appearing in $C$ is even (since
$\left\{  c,\underbrace{c^{\operatorname*{op}}}_{=c}\right\}  =\left\{
c,c\right\}  =\left\{  c\right\}  $). This proves Theorem \ref{thm.BCL}
\textbf{(b)}.
\end{proof}

\section{Open questions}

Theorem \ref{thm.BCL} is a statement about reduced expressions. As with all
such statements, one can wonder whether a generalization to \textquotedblleft
non-reduced\textquotedblright\ expressions would still be true. If $w$ is an
element of $W$, then an \emph{expression} for $w$ means a $k$-tuple $\left(
s_{1},s_{2},\ldots,s_{k}\right)  $ of elements of $S$ such that $w=s_{1}%
s_{2}\cdots s_{k}$. Definition \ref{def.braid} can be applied verbatim to
arbitrary expressions, leading to the concept of an $\left(  s,t\right)
$-braid move. Finally, for every $w\in W$, we define a directed graph
$\mathcal{E}\left(  w\right)  $ in the same way as we defined $\mathcal{R}%
\left(  w\right)  $ in Definition \ref{def.R}, but with the word
\textquotedblleft reduced\textquotedblright\ removed everywhere. This directed
graph $\mathcal{E}\left(  w\right)  $ will be infinite (in general) and
consist of many connected components (one of which is $\mathcal{R}\left(
w\right)  $), but we can still inquire about its cycles. We conjecture the
following generalization of Theorem \ref{thm.BCL}:

\begin{conjecture}
\label{conj.E(w)}Let $w\in W$. Theorem \ref{thm.BCL} is still valid if we
replace $\mathcal{R}\left(  w\right)  $ by $\mathcal{E}\left(  w\right)  $.
\end{conjecture}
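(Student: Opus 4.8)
The plan is to keep the entire architecture of the proof of Theorem~\ref{thm.BCL} intact and to push only the single ingredient that genuinely uses reducedness. Indeed, the passage from Theorem~\ref{thm.has} to Theorem~\ref{thm.BCL} is a pure telescoping argument: it uses only that $\operatorname{Has}$ is a well-defined function on the vertices together with the change formula \eqref{eq.thm.has.a}. So it suffices to extend the definitions of $\operatorname{Invs}$, $\operatorname{has}$, $\operatorname{Has}$ and the statement of Theorem~\ref{thm.has} from reduced expressions to arbitrary expressions for $w$. Most of the machinery survives verbatim: Definition~\ref{def.Invsle} makes sense for any expression, and the proof of Proposition~\ref{prop.Invsles}~\textbf{(b)} never invokes reducedness -- it only manipulates the braid relation and the group structure -- so an $\left(s,t\right)$-braid move still transforms $\operatorname{Invs}\overrightarrow{a}$ by reversing one contiguous factor equal to $q\rho_{s,t}q^{-1}=\rho_{s',t'}$. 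Likewise Lemmas~\ref{lem.dihindih} and~\ref{lem.GandH} are statements about the group and remain available. The \emph{only} casualties are Proposition~\ref{prop.Invsles}~\textbf{(a)} (distinctness of the entries of $\operatorname{Invs}\overrightarrow{a}$) and its consequence Proposition~\ref{prop.has}.

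Accordingly, I would redefine $\operatorname{has}_{s,t}\overrightarrow{a}\in\mathbb{N}$ to be the number of occurrences (counted with multiplicity) of $\rho_{s,t}$ as a subword of $\operatorname{Invs}\overrightarrow{a}$, set $\operatorname{Has}\overrightarrow{a}=\sum_{\left(s,t\right)\in\mathfrak{N}}\operatorname{has}_{s,t}\overrightarrow{a}\cdot\left(s,t\right)\in\mathbb{Z}\left[\mathfrak{N}\right]$ as before, and analyse how $\operatorname{Has}$ changes when the factor $\rho_{s',t'}$ is reversed. Classifying each subword-occurrence of a pattern $\rho_{u,v}$ by the number $j$ of its indices that land inside the reversed window, one checks that occurrences with $j\le 1$ are matched bijectively by the reversal (for $j=1$ via mirroring the single window index) and contribute nothing to the change. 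For $j\ge 2$ the opening steps of Claim~1 in the proof of Theorem~\ref{thm.has} apply unchanged, as they do not use distinctness: two consecutive entries of the occurrence lie in $D_{s',t'}$, so Lemma~\ref{lem.GandH} gives $u,v\in D_{s',t'}$, whence Lemma~\ref{lem.dihindih} forces $D_{u,v}=D_{s',t'}$, $m_{u,v}=m_{s,t}$, and every entry of $\rho_{u,v}$ to be a reflection of $D_{s',t'}$; it is exactly the subsequent appeal to distinctness (which pins the occurrence to the window) that now breaks down. Hence $\operatorname{Has}\overrightarrow{b}=\operatorname{Has}\overrightarrow{a}-\left(s',t'\right)+\left(t',s'\right)+\mathrm{corr}$, where the correction $\mathrm{corr}$ is supported on pairs $\left(u,v\right)$ with $D_{u,v}=D_{s',t'}$ and is fed entirely by \emph{straddling} occurrences that combine window entries with equal-valued entries lying outside the window. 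Such outside copies exist precisely because distinctness has been lost; in the reduced case they cannot occur, which is exactly why $\mathrm{corr}=0$ there.

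Since $\operatorname{Has}$ is still an honest function of the expression, telescoping already kills it around any cycle; the theorem would therefore follow if the two linear functionals used in the proof annihilate $\mathrm{corr}$, namely $\operatorname{coord}_{c_{\mathfrak{N}}}\left(\mathrm{corr}\right)=0$ for part~\textbf{(a)} and $\operatorname{coord}_{d}\left(\mathrm{corr}\right)\equiv 0\pmod 2$ for part~\textbf{(b)}. Restricting to the subsequence $W$ of entries of $\operatorname{Invs}\overrightarrow{a}$ that lie in $D_{s',t'}$ turns this into a self-contained problem: inside $W$ the window is a complete, hence repetition-free, arithmetic progression $r_0,r_1,\dots,r_{m-1}$ through the $m$ reflections of $D_{s',t'}$ (a dihedral group of order $2m$), identified with $\mathbb{Z}/m$, and reversing it flips the count of the length-$m$ progression-subwords $\pi_{a,\delta}$ with step $\delta\in\left(\mathbb{Z}/m\right)^{\times}$, which corresponds to the pair $\left(r_a,r_{a-\delta}\right)$. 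One must show that reversing such a complete contiguous block changes the \emph{class-summed} signed count of these progression-subwords by zero (respectively, evenly). My first attempt would be a sign-reversing involution pairing a straddling occurrence of $\pi_{a,\delta}$ before the reversal with one of a partner progression after it, the pairing being equivariant for the $N_W\left(D_{s',t'}\right)$-conjugation that cuts out the class $c_{\mathfrak{N}}$.

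The construction of this involution is where I expect the real difficulty to lie, and it is presumably why the statement is still only a conjecture. The naive ``same index set'' pairing fails, because once the window order is reversed a straddling index set no longer reads off an arithmetic progression; and small hand computations show that for an \emph{arbitrary} word $W$ in the reflections of $D_{s',t'}$ the straddling corrections need not be even when measured against an arbitrary total order on $T$, so part~\textbf{(b)} cannot hold at this level of generality. The argument must therefore exploit that $W$ is not arbitrary but is the dihedral part of a genuine inversion sequence $\operatorname{Invs}\overrightarrow{a}$ -- a reflection sequence whose partial products are $\left(a_1a_2\cdots a_i\right)^{-1}$. Extracting and using this extra rigidity (or, alternatively, replacing the multiplicity count by a better-behaved signed or homological invariant for which reversal of a complete dihedral block is manifestly clean) is the decisive creative step; everything else is a routine adaptation of the reduced-case proof.
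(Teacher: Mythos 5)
This statement is Conjecture \ref{conj.E(w)}: it is an \emph{open problem} in the paper, which offers no proof of it (only the conditional remark that Conjecture \ref{conj.spin} would imply its part \textbf{(b)}). So there is no paper proof to measure you against; the relevant question is whether your proposal closes the gap, and it does not --- as you yourself acknowledge. Your argument funnels everything into one claim: that the correction term $\mathrm{corr}$ (supported on pairs $\left(u,v\right)$ with $u,v\in D_{s',t'}$, arising from subword occurrences of $\rho_{u,v}$ that straddle the reversed window) is killed by the functional $\operatorname{coord}_{c_{\mathfrak{N}}}$ for part \textbf{(a)} and by $\operatorname{coord}_{d}$ modulo $2$ for part \textbf{(b)}. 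You explicitly leave this unproven, and you even report that it is \emph{false} for an arbitrary word in the reflections of a dihedral group, so the proof would have to extract some additional rigidity of genuine inversion sequences of non-reduced expressions --- rigidity you do not identify. That is the missing idea, and it is the whole difficulty; what precedes it (redefining $\operatorname{has}_{s,t}$ with multiplicities, observing that Definition \ref{def.Invsle}, Proposition \ref{prop.Invsles} \textbf{(b)}, Lemma \ref{lem.dihindih} and Lemma \ref{lem.GandH} survive without reducedness, while Proposition \ref{prop.Invsles} \textbf{(a)} and Proposition \ref{prop.has} do not, and that occurrences with at most one index in the window are preserved by mirroring) is a correct but essentially routine audit of where reducedness enters the proof of Theorem \ref{thm.has}.

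Your diagnosis of the obstruction is accurate, and the reduction to a ``dihedral-local'' cancellation problem is a reasonable way to organize the difficulty. But note that the paper's own suggested attack is genuinely different: Conjecture \ref{conj.spin} builds a central extension $W'$ of $W$ in which the sign of an arbitrary expression flips precisely under braid moves whose color lies in $\left\{ c,c^{\operatorname{op}}\right\}$, so that part \textbf{(b)} for $\mathcal{E}\left(w\right)$ follows from a parity argument once one knows $\left\vert \operatorname{Ker}\pi\right\vert =2$. That route replaces your delicate combinatorial cancellation (which must hold for every cycle and every straddling configuration) by a single group-theoretic statement about $W'$, e.g.\ provable by constructing a suitable representation; if you want to pursue the conjecture, that is likely the more tractable direction, and in any case your text should be labelled as a proof strategy rather than a proof.
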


A further, slightly lateral, generalization concerns a kind of
\textquotedblleft spin extension\textquotedblright\ of a Coxeter group:

\begin{conjecture}
\label{conj.spin}For every $\left(  s,t\right)  \in\mathfrak{M}$, let
$c_{s,t}$ be an element of $\left\{  1,-1\right\}  $. Assume that
$c_{s,t}=c_{s^{\prime},t^{\prime}}$ for any two elements $\left(  s,t\right)
$ and $\left(  s^{\prime},t^{\prime}\right)  $ of $\mathfrak{M}$ satisfying
$\left(  s,t\right)  \sim\left(  s^{\prime},t^{\prime}\right)  $. Assume
furthermore that $c_{s,t}=c_{t,s}$ for each $\left(  s,t\right)
\in\mathfrak{M}$. Let $W^{\prime}$ be the group with the following generators
and relations:

\textit{Generators:} the elements $s\in S$ and an extra generator $q$.

\textit{Relations:}%
\begin{align*}
s^{2}  &  =1\ \ \ \ \ \ \ \ \ \ \text{for every }s\in S;\\
q^{2}  &  =1;\\
qs  &  =sq\ \ \ \ \ \ \ \ \ \ \text{for every }s\in S;\\
\left(  st\right)  ^{m_{s,t}}  &  =1\ \ \ \ \ \ \ \ \ \ \text{for every
}\left(  s,t\right)  \in\mathfrak{M}\text{ satisfying }c_{s,t}=1;\\
\left(  st\right)  ^{m_{s,t}}  &  =q\ \ \ \ \ \ \ \ \ \ \text{for every
}\left(  s,t\right)  \in\mathfrak{M}\text{ satisfying }c_{s,t}=-1.
\end{align*}

There is clearly a surjective group homomorphism $\pi:W^{\prime}\rightarrow W$
sending each $s\in S$ to $s$, and sending $q$ to $1$. There is also a group
homomorphism $\iota:\mathbb{Z}/2\mathbb{Z}\rightarrow W^{\prime}$ which sends
the generator of $\mathbb{Z}/2\mathbb{Z}$ to $q$. Then, the sequence%
\begin{equation}
1\longrightarrow\mathbb{Z}/2\overset{\iota}{\longrightarrow}W^{\prime
}\overset{\pi}{\longrightarrow}W\longrightarrow1\label{eq.conj.spin.seq}%
\end{equation}
is exact. Equivalently, $\iota$ is injective. Equivalently, $\left\vert
\operatorname*{Ker}\pi\right\vert =2$.
\end{conjecture}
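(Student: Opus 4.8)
The plan is to reduce the claim to the single assertion that $q\neq1$ in $W^{\prime}$, and then to prove $q\neq1$ by exhibiting a permutation representation of $W^{\prime}$ in which $q$ acts nontrivially. First I would dispatch the routine parts. The universal property of presentations gives the surjective homomorphism $\pi\colon W^{\prime}\rightarrow W$ sending $s\mapsto s$ and $q\mapsto1$; since $q$ is central with $q^{2}=1$, the subgroup $\langle q\rangle$ is normal of order $1$ or $2$, and imposing $q=1$ on the presentation of $W^{\prime}$ yields exactly the Coxeter presentation of $W$, so $W^{\prime}/\langle q\rangle\cong W$ via $\pi$. Hence $\operatorname{Ker}\pi=\langle q\rangle$, the map $\iota$ is injective iff $q\neq1$, and the sequence \eqref{eq.conj.spin.seq} is exact iff $\left\vert\operatorname{Ker}\pi\right\vert=2$ iff $q\neq1$. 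So everything comes down to showing $q\neq1$.

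For the abbreviation, set $\delta_{s,t}=0$ if $c_{s,t}=1$ and $\delta_{s,t}=1$ if $c_{s,t}=-1$, so the last two relations read $\left(st\right)^{m_{s,t}}=q^{\delta_{s,t}}$; note $\delta_{s,t}=\delta_{t,s}$ and that $\delta$ is constant on $\sim$-classes, by the hypotheses on $c$. The key elementary computation is that, for a reduced expression $\overrightarrow{a}$ of some $w$, the $\left(s,t\right)$-braid move produces $\overrightarrow{b}$ whose product in $W^{\prime}$ satisfies $\widetilde{a}=q^{\delta_{s,t}}\widetilde{b}$, because the two alternating words of length $m_{s,t}$ differ in $W^{\prime}$ precisely by $\left(st\right)^{m_{s,t}}=q^{\delta_{s,t}}$ and $q$ is central. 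I would then build a combinatorial cochain recording these signs: extend $\delta$ to $\mathfrak{N}$ by conjugation-invariance, choose (after fixing a total order on $T$) a map $\lambda\colon\mathfrak{N}\rightarrow\mathbb{Z}/2$ with $\lambda\left(u,v\right)+\lambda\left(v,u\right)=\delta_{u,v}$, and put $g\left(\overrightarrow{a}\right)=\sum_{\left(u,v\right)\in\mathfrak{N}}\lambda\left(u,v\right)\operatorname*{has}\nolimits_{u,v}\overrightarrow{a}\in\mathbb{Z}/2$, a function of reduced words. Theorem \ref{thm.has}, which describes how $\operatorname*{Has}$ changes under a braid move by the transposition $-\left(s^{\prime},t^{\prime}\right)+\left(t^{\prime},s^{\prime}\right)$, shows that an $\left(s,t\right)$-braid move changes $g$ by exactly $\delta_{s,t}$. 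Because prepending a letter $s$ commutes with every braid move performed in the tail, the quantity $h\left(s,w\right):=g\left(\overrightarrow{a}\right)+g\left(\left(s,\overrightarrow{a}\right)\right)$ (for $l\left(sw\right)=l\left(w\right)+1$, and $h\left(s,w\right):=h\left(s,sw\right)$ otherwise) is, via the Matsumoto--Tits theorem, independent of the chosen reduced expression $\overrightarrow{a}$; thus $h$ is a well-defined $\mathbb{Z}/2$-labeling of the edges of the Cayley graph satisfying $h\left(s,sw\right)=h\left(s,w\right)$.

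With $h$ in hand I would define an action of the generators on $\Omega=W\times\mathbb{Z}/2$ by
\[
\widehat{q}\left(w,\varepsilon\right)=\left(w,\varepsilon+1\right),\qquad\widehat{s}\left(w,\varepsilon\right)=\left(sw,\varepsilon+h\left(s,w\right)\right),
\]
and verify the defining relations of $W^{\prime}$. The identities $\widehat{q}^{\,2}=\id$, $\widehat{q}\widehat{s}=\widehat{s}\widehat{q}$, and $\widehat{s}^{\,2}=\id$ (the last using $h\left(s,sw\right)=h\left(s,w\right)$) are immediate, and $\widehat{q}\neq\id$ manifestly. The crucial relation is $\left(\widehat{s}\widehat{t}\right)^{m_{s,t}}=\widehat{q}^{\,\delta_{s,t}}$, which amounts to proving that the total $h$-holonomy around the $2m_{s,t}$-step dihedral loop based at each $w$ equals $\delta_{s,t}$. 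This is the step I expect to be the main obstacle, and it is exactly where Theorem \ref{thm.BCL} is indispensable: the holonomy is a signed count of braid moves along a closed walk in a graph of reduced expressions, and one reorganizes it, using the symmetry $\delta_{s,t}=\delta_{t,s}$ and conjugation-invariance, into $\sum_{\{c,c^{\operatorname{op}}\}}\delta_{c}\cdot N_{\{c,c^{\operatorname{op}}\}}$, where $N_{\{c,c^{\operatorname{op}}\}}$ is the number of arcs colored in $\left\{c,c^{\operatorname{op}}\right\}$; the evenness in Theorem \ref{thm.BCL}\textbf{(b)}, together with the value $\delta_{s,t}$ contributed by the braid relation itself, forces the holonomy to be precisely $\delta_{s,t}$ rather than an unconstrained element of $\mathbb{Z}/2$. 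Granting this, the universal property of the presentation of $W^{\prime}$ yields a homomorphism $\Psi\colon W^{\prime}\rightarrow\Sym\left(\Omega\right)$ with $\Psi\left(s\right)=\widehat{s}$ and $\Psi\left(q\right)=\widehat{q}\neq\id$, whence $q\neq1$ in $W^{\prime}$. This establishes $\left\vert\operatorname{Ker}\pi\right\vert=2$ and the exactness of \eqref{eq.conj.spin.seq}.
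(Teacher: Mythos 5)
You should first be aware of the status of what you are proving: this statement is Conjecture \ref{conj.spin}, which the paper itself does \emph{not} prove --- it is left open, with only the closing remark that it ``could be proven by constructing a regular representation''. Your proposal is an attempt at exactly such a construction, so there is no paper proof to compare against; it must stand on its own. Its routine parts do stand. The reduction to showing $q\neq 1$ in $W^{\prime}$ is correct: $q$ is central with $q^{2}=1$, so $\left\langle q\right\rangle$ is normal, and setting $q=1$ in the presentation recovers the Coxeter presentation of $W$, whence $\operatorname{Ker}\pi=\left\langle q\right\rangle$. The cochain setup is also sound: $\lambda$ exists, the sum defining $g$ is finite, Theorem \ref{thm.has} together with the conjugation-invariance and symmetry of $\delta$ (which extend unambiguously from $\mathfrak{M}$ to $\mathfrak{N}$) shows that an $\left(s,t\right)$-braid move changes $g$ by exactly $\delta_{s,t}$, and the same invariance makes the edge labels $h$ well defined (the conjugator of a tail braid move changes from $q^{\prime}$ to $sq^{\prime}$, which does not affect $\delta$), giving $\widehat{s}^{\,2}=\widehat{q}^{\,2}=\operatorname{id}$ and $\widehat{q}\widehat{s}=\widehat{s}\widehat{q}$.

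The genuine gap is the step you yourself flag and then skip: the relation $\left(\widehat{s}\widehat{t}\right)^{m_{s,t}}=\widehat{q}^{\,\delta_{s,t}}$ is the entire hard content of the conjecture, and ``Granting this'' concedes it rather than proves it. Worse, the justification you sketch cannot work as stated: Theorem \ref{thm.BCL} \textbf{(b)} concerns cycles in $\mathcal{R}\left(v\right)$, whose vertices are reduced expressions of one fixed element $v$, whereas the holonomy you need is around the loop $w\to tw\to stw\to\cdots\to w$ in the Cayley graph of $W$, whose $2m_{s,t}$ vertices are pairwise distinct group elements of varying lengths. That loop is not a cycle in any $\mathcal{R}\left(v\right)$, so there are no ``arcs colored in $\left\{c,c^{\operatorname{op}}\right\}$'' to count, and Theorem \ref{thm.BCL} \textbf{(b)} has no purchase on it. The good news is that the missing step can be closed inside your own framework, without Theorem \ref{thm.BCL} at all. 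Since $h\left(s,v\right)=h\left(s,sv\right)$, the holonomy is just the sum of $h$ over the $2m_{s,t}$ edges joining elements of the coset $D_{s,t}w$, independently of the base point. Let $w_{0}$ be the minimal-length element of this coset, so that $l\left(dw_{0}\right)=l\left(d\right)+l\left(w_{0}\right)$ for all $d\in D_{s,t}$ (this is \cite[Lemma 9.7]{Lusztig-Hecke}, already invoked in the proof of Lemma \ref{lem.dihindih}); fix a reduced expression $\overrightarrow{w_{0}}$, and represent each coset element $dw_{0}$ by the reduced expression obtained by prepending the alternating $\left\{s,t\right\}$-word for $d$ to $\overrightarrow{w_{0}}$. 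With these compatible choices, the $h$-values along each of the two ascending chains from $w_{0}$ to the maximal element telescope modulo $2$, and the whole holonomy collapses to $g\left(\overrightarrow{u}\right)+g\left(\overrightarrow{v}\right)$, where $\overrightarrow{u}$ and $\overrightarrow{v}$ are the two reduced expressions of the maximal element beginning with the two alternating words of length $m_{s,t}$. These differ by a single $\left(s,t\right)$-braid move located at the very start of the word, so that one can take $q=1$ in Theorem \ref{thm.has}, giving $g\left(\overrightarrow{u}\right)+g\left(\overrightarrow{v}\right)=\lambda\left(s,t\right)+\lambda\left(t,s\right)=\delta_{s,t}$, as required. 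With this step supplied, your universal-property argument goes through and yields $q\neq1$. That the repaired proof never uses Theorem \ref{thm.BCL} is to be expected: as the paper notes, Conjecture \ref{conj.spin} implies the strengthening of Theorem \ref{thm.BCL} \textbf{(b)} to non-reduced expressions, and the potential function $g$ --- the same device that proves Theorem \ref{thm.BCL} in the paper --- is what does all the work here.
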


(Note that exactness of the sequence (\ref{eq.conj.spin.seq}) at $W^{\prime}$
and at $W$ is easy.)

If Conjecture \ref{conj.spin} holds, then so does Conjecture \ref{conj.E(w)}
\textbf{(b)} (that is, Theorem \ref{thm.BCL} \textbf{(b)} holds with
$\mathcal{R}\left(  w\right)  $ replaced by $\mathcal{E}\left(  w\right)  $).
Indeed, assume Conjecture \ref{conj.spin} to hold. Let $c\in\mathfrak{M}/\sim$
be an equivalence class. For any $\left(  u,v\right)  \in\mathfrak{M}$, define%
\[
c_{u,v}=%
\begin{cases}
-1, & \text{if }\left(  u,v\right)  \in c\text{ or }\left(  v,u\right)  \in
c;\\
1, & \text{otherwise}%
\end{cases}
.
\]
Thus, a group $W^{\prime}$ is defined. Pick any section $\mathbf{s}%
:W\rightarrow W^{\prime}$ (in the category of sets) of the projection
$\pi:W^{\prime}\rightarrow W$. If $w\in W$, and if $\left(  s_{1},s_{2}%
,\ldots,s_{k}\right)  $ is an expression of $w$, then the product $s_{1}%
s_{2}\cdots s_{k}$ formed in $W^{\prime}$ will either be $\mathbf{s}\left(
w\right)  $ or $q\mathbf{s}\left(  w\right)  $; and these latter two values
are distinct (by Conjecture \ref{conj.spin}). We can then define the
\emph{sign} of the expression $\left(  s_{1},s_{2},\ldots,s_{k}\right)  $ to
be $%
\begin{cases}
1, & \text{if }s_{1}s_{2}\cdots s_{k}=\mathbf{s}\left(  w\right)  ;\\
-1, & \text{if }s_{1}s_{2}\cdots s_{k}=q\mathbf{s}\left(  w\right)
\end{cases}
\in\left\{  1,-1\right\}  $. The sign of an expression switches when we apply
a braid move whose arc's color belongs to $\left\{  c,c^{\operatorname*{op}%
}\right\}  $, but stays unchanged when we apply a braid move of any other
color. Conjecture \ref{conj.E(w)} \textbf{(b)} then follows by a simple parity argument.

The construction of $W^{\prime}$ in Conjecture \ref{conj.spin} generalizes the
construction of one of the two \emph{spin symmetric groups} (up to a
substitution). We suspect that Conjecture \ref{conj.spin} could be proven by
constructing a \textquotedblleft regular representation\textquotedblright, and
this would then yield an alternative proof of Theorem \ref{thm.BCL}
\textbf{(b)}.

\end{document}